\documentclass[12pt,reqno]{amsart}
\usepackage[headings]{fullpage}
\usepackage{amssymb,amsmath,mathtools,bbm,tikz,tikz-cd,color,mathrsfs}
\usepackage[bookmarks=true,%
    colorlinks=true,%
    linkcolor=blue,%
    citecolor=blue,%
    filecolor=blue,%
    menucolor=blue,%
    urlcolor=blue,%
    breaklinks=true]{hyperref}

\newtheorem{theorem}{Theorem}[section]
\theoremstyle{definition}
\newtheorem{proposition}[theorem]{Proposition}
\newtheorem{lemma}[theorem]{Lemma}
\newtheorem{definition}[theorem]{Definition}
\newtheorem{remark}[theorem]{Remark}
\newtheorem{corollary}[theorem]{Corollary}

\newtheorem{example}[theorem]{Example}

\def\a{\alpha}
\def\b{\beta}

\def\d{\delta}

\def\ve{\varepsilon}
\def\th{\theta}

\def\BZ{\mathbbm Z}
\def\BQ{\mathbbm Q}

\def\BK{\mathbbm K}

\def\BT{\mathbbm T}

\def\CN{\mathcal N}

\def\id{\mathrm{id}}
\def\End{\mathrm{End}}

\def\LG{\mathrm{LG}}

\def\1{\boldsymbol{1}}

\renewcommand{\phi}{\varphi}

\def\BE{\mathbbm E}   
\def\BB{\mathbbm B}
\def\CA{\mathcal A}
\def\CB{\mathcal B}    
\def\CC{\mathcal C}
\def\CE{\mathcal E}          
\def\CF{\mathcal F}

\def\CS{\mathcal S}

\def\ext{{\textstyle\bigwedge}}   \def\T{\mathsf{T}}             
\def\tB{\widetilde{B}}
\def\tP{\widetilde{P}}
\def\Rp{\mathsf R}                  
\def\Rm{\mathsf R^{-}}            
\def\bx{\bar x}
\def\by{\bar y}
\def\Ga{G}                        
\def\Gm{G^{-}}
\def\hate{\hat e}                   
\def\cdag{c^{\dagger}}
\def\tr{\mathrm{tr}}

\def\diag{\mathrm{diag}}
\def\sgn{\mathrm{sgn}}
\def\spn{\mathrm{span}}

\begin{document}

\title[]{The Links--Gould polynomial of an alternating link is alternating}

\author[M. Harper]{Matthew Harper}
\address{Department of Mathematics, Michigan State University, East Lansing, Michigan, USA}
\email{mrhmath@proton.me}
   
\author[B.-M. Kohli]{Ben-Michael Kohli}
\address{Section de Math\'ematiques, Universit\'e de Gen\`eve \\
rue du Conseil-G\'en\'eral 7-9, 1205 Gen\`eve, Switzerland}
\email{bm.kohli@protonmail.ch}

\thanks{
  {\em Key words and phrases:}
  Links--Gould invariant, Alexander polynomial, alternating links, Burau representation.
}

\date{\today}

\begin{abstract}
Building on work of Kauffman--Saleur and earlier work of the second author, we derive a determinant formula for the Links--Gould polynomial of links. As an application, we prove a 20-year-old conjecture of Ishii: for any alternating link, the Links--Gould invariant is an alternating polynomial.
\end{abstract}
\maketitle

{\footnotesize
\tableofcontents
}

\section{Introduction}

The Links--Gould invariant of links $\LG^{2,1}(L;t_{0},t_{1}) = \LG(L;t_{0},t_{1})  \in \mathbb{Z}[t_0^{\pm 1} , t_1^{\pm 1}]$ is obtained from the Reshetikhin--Turaev construction applied to representations of the Hopf superalgebras $U_{q}\mathfrak{gl}(2 \vert 1)$, see \cite{LinksGould, DeWitKauffmanLinks, DW01}. The $\LG$ invariant is closely related to the Alexander polynomial $\Delta_L$: recent results show that it reproduces some of the Alexander polynomial's most distinctive characteristics. Specifically, the degree of the Links--Gould polynomial gives a lower bound for the genus of a knot by work of two of the authors \cite{Kohli-Tahar}. Also, the $\LG$ invariant of a fibered link is $\mathbb{Z}[q^{\pm 1}]$-monic following the recent work of L\'opez-Neumann and van der Veen \cite{LNVdV25}. These two properties imply and improve upon similar statements for the Alexander polynomial. This is because $\LG(L;t_0,t_1)$ is known to specialize to $\Delta_L(t)$ in two different ways \cite{DWIL, Kohli, KPM}: 
$$
\LG(L;t_0,-t_0^{-1}) = \Delta_L(t_0^2)\,, \qquad
\LG(L; t_0, t_0^{-1}) = \Delta_L(t_0)^2.
$$
It is therefore reasonable to imagine that other properties of the Alexander polynomial should have their counterparts for the Links--Gould invariant, and that $\LG$ has an expression that makes it more closely related to the classical homological definitions of the Alexander polynomial.

For an alternating knot $K$, Ishii conjectured that the coefficients $a_{ij}$ of
$LG_K(t_0,t_1)=\sum a_{ij}t_0^it_1^j$ satisfy $a_{ij}a_{i'j'}\ge0$ if $i+j-i'-j'$ is even and $a_{ij}a_{i'j'}\le0$ otherwise, and
he verified this for the prime knots with at most $10$ crossings \cite[Section~5]{Ishii}; see also
\cite[Conjecture~1.1]{HKST}. For an alternating link $L$ with $\mu$ components, the two authors, Song and Tahar
conjectured that $a_{ij}=(-1)^{\mu+i+j+1}|a_{ij}|$ for all $i,j$ and that the array of the coefficients has no interior zeros \cite[Conjecture~1.3]{HKST}; for knots, the first of these two statements is also part of \cite[Conjecture~1.1]{GLY}. Here we prove the alternating conjecture for the Links--Gould polynomial on links.

\begin{theorem}\label{thm.alternating}
Let $L$ be an alternating link with $\mu$ components. Then the Links--Gould polynomial of $L$ is alternating, namely,
\[
\LG(L;t_0,t_1)=\sum_{ij}(-1)^{\mu-1+i+j}|a_{ij}| t_0^it_1^j.
\]
Moreover, $\LG(L)\ne0$ if $L$ has a connected
alternating diagram.
\end{theorem}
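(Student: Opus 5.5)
The plan is to mimic Crowell's proof that the Alexander polynomial of an alternating link is alternating, using the determinant formula for $\LG$ promised in the abstract. Fix a reduced connected alternating diagram $D$ of $L$ with $n$ crossings. The Kauffman--Saleur style state model, extended to $U_q\mathfrak{gl}(2\vert1)$, should express $\LG(L;t_0,t_1)$, up to a unit $\pm t_0^{a}t_1^{b}$, as the determinant of an $N\times N$ matrix $M_D$. Here $N$ is proportional to $n$, and $M_D$ is assembled from local $2\times2$ or $4\times4$ blocks, one per crossing, in the spirit of a colored Burau / free-fermion representation. I would first write this formula explicitly as a sum over states. By the Leibniz expansion, $\det M_D=\sum_{\sigma}\sgn(\sigma)\prod_i (M_D)_{i\sigma(i)}$, where each permutation is a choice of local transition at every crossing. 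So a nonzero term is a ``state'' contributing $\pm$ a monomial in $t_0,t_1$.

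The key step is a sign lemma. For an alternating diagram, after a suitable normalization of the entries (replacing $t_j$ by $-t_j$ and fixing the overall unit), every nonzero state contribution has sign exactly $(-1)^{\mu-1+i+j}$, where $t_0^it_1^j$ is its monomial. Equivalently, the substitution $t_0\mapsto -t_0$, $t_1\mapsto -t_1$ turns $\LG$ into $(-1)^{\mu-1}$ times a polynomial with nonnegative coefficients.

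To prove the sign lemma I would proceed as in the Kauffman clock-theorem proof of Crowell's result. At each crossing, the alternating condition forces the local entries of $M_D$ to come in a fixed sign pattern relative to the checkerboard coloring. Moreover, the sign of the permutation $\sigma$ of a state differs from that of a reference state by the parity of the number of local ``transpositions'' (clock moves). Each clock move changes the monomial degree $i+j$ by an odd amount and the permutation sign by $-1$, and the alternating structure makes the product of local entry signs change compatibly. Connectivity of the state set under clock moves then gives constant sign after the twist, and a single reference state fixes the global sign $(-1)^{\mu-1}$; one can check this against the specialization $\LG(L;t_0,-t_0^{-1})=\Delta_L(t_0^2)$ together with Crowell's theorem. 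I expect this to be the main obstacle. Unlike the Alexander case, the $\mathfrak{gl}(2\vert1)$ blocks carry two variables and genuinely fermionic signs, including super-signs. So one must show that the super-sign of $\sigma$ combines with the entry signs into a quantity that depends only on the parity of $i+j$. I would first try to factor the local $R$-matrix in the free-fermion basis as a tensor product of two Burau-type blocks with parameters built from $t_0,t_1$. That would reduce the sign count to two interleaved Crowell-type arguments.

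Since all terms then have sign $(-1)^{\mu-1+i+j}$ with no cancellation between opposite signs, the alternating form holds. For nonvanishing it then suffices that at least one state exists, i.e.\ that $\det M_D$ has at least one nonzero term. Alternatively, we can specialize: $\LG(L;t_0,t_0^{-1})=\Delta_L(t_0)^2$, and $\Delta_L\neq0$ for a connected alternating diagram by Crowell (Kauffman states exist when the diagram is connected). Hence $\LG(L)\neq0$.
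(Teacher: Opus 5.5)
\textbf{There is a genuine gap in your sign lemma, which is the whole content of the first claim.} It rests on a premise that neither you nor the paper supplies. You assume $\LG(L;t_0,t_1)$ is, up to a unit, $\det M_D$ for a single matrix over $\BZ[t_0^{\pm1},t_1^{\pm1}]$, and the route you suggest for producing such a matrix fails. In the two-fermion Fock basis, the gauged $R$-matrix $-KRK^{-1}$ equals $\Phi_2^{-1}\ext(\rho(t_0)\oplus\rho(t_1)^{\T})\Phi_2$, a graded tensor product of two Burau-type Kauffman--Saleur blocks, only when $t_0t_1=1$. In general there is an extra term, $(1-t_0t_1)$ times a quartic operator of rank four. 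So the free-fermion condition fails and the model is not Gaussian. Your Leibniz/clock-move count therefore has nothing to act on, and its parity claims (a fixed local sign pattern, each clock move changing $i+j$ by an odd amount) are unsupported. Running two Crowell arguments at $t_0$ and at $t_1$ separately controls only the free part, essentially $\Delta_L(t_0)\Delta_L(t_1)$. The remaining corrections, divisible by $1-t_0t_1$, have no evident sign.

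\textbf{The missing idea is to give every crossing two independent variables $x_j,y_j$.} The $R$-matrix is then the image of the free two-flavor operator under the \emph{linear}, not multiplicative, map $x_j\mapsto t_0^{\ve_j}$, $y_j\mapsto t_1^{\ve_j}$, $x_jy_j\mapsto1$. This gives $\LG=(-1)^{\mu-1}\BE\bigl[\det(1-B(x)')\det(1-\tB(y)')\bigr]$. The sign argument is then run before applying $\BE$. Crowell's argument with one variable per crossing shows that both determinants equal the same sign $\pm1$ times a sum, with coefficients $+1$, of monomials $\prod_{j\in S}(-x_j)$ over under-trees, respectively $\prod_{j\in S'}(-y_j)$ over over-trees. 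Then $\BE$ preserves these signs, because $(-x_j)(-y_j)\mapsto1$ and $-t^{\ve}=(-t)^{\ve}$.

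This needs the formula on the alternating diagram itself, cut at an edge leaving a crossing as over-strand, not on a braid presentation. The paper obtains this from invariance of the expression under type~II moves, Vogel's algorithm, and independence of the cut strand (property (P$_2$) of the $R$-matrix). Finally, you fix a connected alternating diagram, which need not exist. If it does not, $L$ is split, and you must separately show $\LG(L)=0$.

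\textbf{Your second argument for the nonvanishing clause is correct and shorter than the paper's.} Since $\LG(L;t_0,t_0^{-1})=\Delta_L(t_0)^2$, and $\Delta_L\neq0$ for a link with a connected alternating diagram by Crowell, it follows that $\LG(L)\neq0$. The paper instead reads nonvanishing off its cancellation-free state sum, using the existence of an under-tree and an over-tree. Your first argument for nonvanishing depends on the missing sign lemma.
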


The other statements conjectured in \cite{HKST} and \cite{GLY} (absence of interior
zeros, log-concavity, unimodality) are not addressed here. 

Our proof of Theorem \ref{thm.alternating} uses the $V_1$-polynomial of Garoufalidis and Kashaev \cite{GaroufalidisKashaev}, which is equal to Links--Gould: $$V_1(L;t_0,t_1)=\LG(L;t_0,t_1)$$ 
for every link $L$, in the
variables of Ishii. This fact is proved in \cite{GHKKST}.

Crucially, we find an expression for the $\LG = V_1$ polynomial that extends both the work of Kauffman--Saleur \cite{KauffmanSaleur} and earlier work of the second author \cite{Kohli}. Kauffman and Saleur showed that the $R$-matrix of the Alexander--Conway polynomial is the exterior power of a Burau matrix, so that the state sum is a Gaussian Berezin integral, i.e.\ a determinant. We carry out the same program for the $R$-matrix of the $V_1$-polynomial of Garoufalidis--Kashaev, in the form given in \cite{GHKST}. The space $W$ is the Fock space of two fermions per strand, and for $r=-1$, in a suitable gauge,
\[
   -R\;=\;\ext\bigl(\rho(t_0)\oplus\rho(t_1)^{\T}\bigr)\;+\;(1-t_0t_1)\,\bigl(n_{a,1}-a_1^\dagger a_2\bigr)\bigl(n_{b,1}-b_2^\dagger b_1\bigr),
\]
that is, two copies of the Kauffman--Saleur free-fermion $R$-matrix, at $t_0$ and at $t_1$, plus a single quartic
term. Equivalently, $-R$ is obtained from the free two-flavor $R$-matrix
$\ext(\rho(x)\oplus\rho(y)^{\T})$ by the linear substitution $x\mapsto t_0$, $y\mapsto t_1$, $xy\mapsto 1$. For a braid $\b$, let $B(x)$ and $\tB(x)$ be two flavors of a product of Burau elementary matrices where an independent variable $x=\{x_j\}$ is assigned at each crossing.
\begin{theorem}
    For the closure $\widehat\b$ of a braid $\b$ with crossings $j=1,\dots,c$ of signs $\ve_j$ with $\mu$ components,
\begin{gather*}
   \LG(\widehat\b;t_0,t_1)\;=\;(-1)^{\mu-1}\;\BE\Bigl[\det\bigl(1-B(x)'\bigr)\,\det\bigl(1-\tB(y)'\bigr)\Bigr],\\
   \BE[x_j]=t_0^{\ve_j},\qquad \BE[y_j]=t_1^{\ve_j},\qquad \BE[x_jy_j]=1,
\end{gather*}
where $M'$ denotes $M$ with its first row and column removed.
\end{theorem}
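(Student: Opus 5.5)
We will derive the formula from the Reshetikhin--Turaev state sum for $V_1=\LG$, using the $R$-matrix decomposition displayed above. The starting point is the identity
\[
-R=\ext\bigl(\rho(t_0)\oplus\rho(t_1)^{\T}\bigr)+(1-t_0t_1)\,Q ,
\]
with $Q$ the quartic term. We first reformulate this as a statement about the operator-valued polynomial $\ext(\rho(x)\oplus\rho(y)^{\T})$ in commuting formal variables. The entries of $\ext(\rho(x)\oplus\rho(y)^{\T})=\ext\rho(x)\otimes\ext\rho(y)^{\T}$ are polynomials in $x,y$ of degree at most one in each. The claim that $-R$ is obtained by the substitution $x\mapsto t_0$, $y\mapsto t_1$, $xy\mapsto 1$ means exactly that $-R=\BE\bigl[\ext(\rho(x)\oplus\rho(y)^{\T})\bigr]$ for the linear functional $\BE$ with $\BE[1]=1$, $\BE[x]=t_0$, $\BE[y]=t_1$, $\BE[xy]=1$. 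The difference from the naive substitution $xy\mapsto t_0t_1$ is precisely the coefficient of $xy$, which we verify equals $Q$ by a direct computation on the sixteen-dimensional two-strand space. The inverse crossing is treated the same way: $R^{-1}$ corresponds to $\rho(x)^{-1}$, which by the Burau inverse formula is affine in $x^{-1}$, giving $\BE[x_j]=t_0^{-1}$, $\BE[y_j]=t_1^{-1}$, $\BE[x_jy_j]=1$.

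Next, since each crossing carries independent variables $x_j,y_j$, and $\BE$ is taken to be the product functional over $j$, multilinearity gives
\[
\prod_j(-R_j)^{\ve_j}=\BE\Bigl[\prod_j \ext\bigl(\rho(x_j)\oplus\rho(y_j)^{\T}\bigr)\Bigr].
\]
This holds because each factor is affine in each of $x_j$ and $y_j$, and $\BE$ factorizes across distinct $j$. The product of exterior powers of the single-strand operators is the exterior power of the matrix product. Hence the braid operator is
\[
(-1)^{c}\,\BE\bigl[\ext(B(x)\oplus\tB(y))\bigr],
\]
where $B(x)$ is the Burau product and $\tB(y)$ is the product of transposed, or dual, Burau factors. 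The sign $(-1)^c$ should be absorbed by the gauge and framing normalization, together with the pivotal element below.

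For the closure, $\LG$ is the modified (partial) quantum trace: we trace with the pivotal element over strands $2,\dots,n$ and leave strand $1$ open, taking the scalar. The pivotal element on the fermionic Fock space acts as a grading operator $(-1)^F$ times a Cartan factor that the gauge absorbs. The supertrace of $\ext(M)$ over a Fock space is $\det(1-M)$. Cutting strand $1$ open restricts to the vacuum-to-vacuum component on the first flavor modes, which yields the minors $\det(1-B')$ and $\det(1-\tB')$, in the Kauffman--Saleur manner. The two flavors decouple as a tensor product, so the trace factorizes into the product of the two determinants before $\BE$ is applied. Because $\BE$ is linear, it commutes with taking the trace.

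The main obstacle is the global sign and normalization. We must track the $(-1)^c$, the pivotal/ribbon twist corrections (writhe factors $t_0^{\pm}$, which should already be absorbed by the gauge choice $r=-1$), and the sign of the open-strand reduction. To fix all of these at once, we will check the formula on the unknot and the Hopf link. We will then argue that the sign depends only on $\mu$, via the parity of the permutation cycles that enter the supertrace, which produces $(-1)^{\mu-1}$.
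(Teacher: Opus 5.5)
Your architecture is the paper's: a one-crossing identity $\Rp=\BE_{t_0,t_1}[\Ga(x,y)]$, factorization of $\BE$ over crossings to reach $\ext(B(x)\oplus\tB(y))$, and the compression lemma with strand $1$ in the vacuum. The gap is the normalization, which you leave open and partly misdiagnose.

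\textbf{The sign.} The $(-1)^c$ is not absorbed by anything, and there are no other corrections to absorb.
\begin{itemize}
\item The gauge is a diagonal conjugation, which does not change the diagonal entries entering the closure.
\item $r$ is a twist (Lemma~\ref{lem.twist}), not a normalization.
\item There is no writhe correction, because $(R_{\pm1},h)$ already gives $V_1$ by Theorem~\ref{thm.GHKST}.
\item The pivotal element is exactly $h=\diag(-1,1,1,-1)$, i.e.\ $h_i=-(-1)^{\a(e_i)+\b(e_i)}$, with no Cartan factor.
\end{itemize}
Hence the closure weights are $h_{m_2}\cdots h_{m_n}=(-1)^{n-1}(-1)^{|S|}$. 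The factor $(-1)^{|S|}$ is what makes Lemma~\ref{lem.compress} produce $\det(1-\BB[Q|Q])$. The leftover sign is $(-1)^{n-1+c}$, and it equals $(-1)^{\mu-1}$ because the permutation of $w$ is a product of $c$ transpositions with $\mu$ cycles.

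The $(-1)^c$ is indispensable: for $w=\sigma_1$ one has $P_w=\tP_w=1$, and without it the unknot would get the value $-1$. Checking the unknot and the Hopf link cannot replace this bookkeeping, since the factor to be determined depends on the braid word.

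\textbf{Two steps asserted rather than proved.}

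First, the displayed decomposition of $-R$ holds only after a gauge. Literally $-R\,e_{14}=t_0t_1\,e_{41}$, while $\BE_{t_0,t_1}[\Ga]\,e_{14}=t_0\,e_{41}$, and the quartic term vanishes on $e_{14}$. You need $\Rp=-KRK^{-1}$ with $K=\id_W\otimes\diag(1,1,t_1,t_1)$, at $r=-1$. On $n$ strands you need the position-dependent gauge $K_n$, conjugating strand $k$ by $\diag(1,1,t_1,t_1)^{k-1}$. It induces $K$ on each adjacent pair because $R$ preserves the total $\b$-degree.

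Second, $\BE$ is not multiplicative: it sends $xy\mapsto1$ rather than $t_0t_1$. So the negative-crossing formula $\Rp^{-1}=\BE_{t_0^{-1},t_1^{-1}}[\Gm(x,y)]$ does not follow from $\rho(x)^{-1}=\rho^-(x^{-1})$ and the positive case. It needs its own check; in the paper this is the verification $\mathsf P^-\mathsf P=1$ on the sector of bidegree $(1,1)$.
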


We also give the corresponding Berezin integral and a state sum over pairs of spanning trees of the diagram. We then show that the formula holds for every connected diagram of the link, cut along any of its edges.

All this combined allows us to deduce that the $\LG$-polynomial of an alternating link is alternating, following an argument of Crowell \cite{Crowell} for the Alexander polynomial.

\subsection*{Conventions}
Matrices act on column vectors: the $q$-th column of a matrix is the image of the $q$-th basis vector.
For a matrix $M$ and index sets $T,S$ we write $M[T|S]$ for the submatrix with rows $T$ and columns $S$.
$M^{\T}$ is the transpose. For an $n\times n$ matrix $M$, $M'$ denotes $M$ with its first row and first column
removed. Throughout, $\BK=\BQ(t_0,t_1)$; all quantities that occur are in fact Laurent polynomials with
integer coefficients.

\subsection*{Structure of the Paper}
In Section \ref{sec.setup} we recall the $V_1$ polynomial via the Reshetikhin--Turaev construction with an explicit $R$-matrix. Section \ref{sec.exterior} discusses the framework for the computations in the exterior algebra. Section \ref{sec.rmatrix} describes the representation isomorphism from the $R$-matrix representation of the braid group to Fock space. All this leads to the determinant formula, our main theorem, in Section \ref{sec.theorem}. In Section \ref{sec.lagrangian}, we discuss the Berezinian and matrix-tree-theorem formulation of the determinant formula. Section \ref{sec.diagrams} is dedicated to proving the determinant formula for arbitrary diagrams. Section \ref{sec.alternating} provides the proof of the alternating coefficient conjecture.

\subsection*{Acknowledgements}
BK wishes to thank Guillaume Tahar for the discussions they had a few years ago on the Kauffman-Saleur model, which were a source of inspiration for this work, and Emmanuel Wagner for pointing this model out many years ago. The authors thank Micah Chrisman, David Cimasoni, and Livio Ferretti for interesting conversations. MH was partially supported through the NSF-RTG grant
\#DMS-2135960.

\section{The \texorpdfstring{$R$}{R}-matrix and the invariant}\label{sec.setup}

\subsection{The \texorpdfstring{$R$}{R}-matrix}
Let $W$ be the $\BK$-vector space with basis $e_1,e_2,e_3,e_4$, and write $e_{ij}=e_i\otimes e_j$.
We use two gradings $\a,\b$ on $W$ (called $\deg_{e_2}$, $\deg_{e_3}$ in \cite{GHKST}):
\[
 \a(e_1)=0,\ \a(e_2)=1,\ \a(e_3)=0,\ \a(e_4)=1,\qquad
 \b(e_1)=0,\ \b(e_2)=0,\ \b(e_3)=1,\ \b(e_4)=1 ,
\]
extended additively to tensor products. Let $r\in\BK^\times$. The $R$-matrix $R=R_r\in\End(W\otimes W)$ of
\cite[Appendix~A.1]{GHKST} is given by
\begin{equation}\label{eq.R}
\begin{aligned}
R\,e_{11}&=-e_{11}, & R\,e_{44}&=-e_{44},\\
R\,e_{12}&=-t_0\,e_{21}, & R\,e_{21}&=-e_{12}+(t_0-1)\,e_{21},\\
R\,e_{13}&=-t_1\,e_{31}, & R\,e_{31}&=-e_{13}+(t_1-1)\,e_{31},\\
R\,e_{22}&=t_0\,e_{22}, & R\,e_{33}&=t_1\,e_{33},\\
R\,e_{24}&=r\,t_0t_1\,e_{42}, & R\,e_{42}&=r^{-1}t_1^{-1}\,e_{24}+(t_0-1)\,e_{42},\\
R\,e_{34}&=r^{-1}\,e_{43}, & R\,e_{43}&=r\,t_1\,e_{34}+(t_1-1)\,e_{43},\\
R\,e_{14}&=-t_0t_1\,e_{41}, & R\,e_{23}&=-r\,t_1\,e_{32}+(t_0-1)t_1\,e_{41},
\end{aligned}
\end{equation}
\begin{equation}\label{eq.R2}
\begin{aligned}
R\,e_{32}&=-r^{-1}t_1^{-1}\,e_{23}+r^{-1}(1-t_0)\,e_{41},\\
R\,e_{41}&=-e_{14}+(t_1^{-1}-1)\,e_{23}+r\,(t_1-1)\,e_{32}+(t_0+t_1-2)\,e_{41}.
\end{aligned}
\end{equation}

\begin{lemma}\label{lem.blocks}
$R_r$ preserves the bigrading $(\a,\b)$ of $W\otimes W$, and it is invertible.
\end{lemma}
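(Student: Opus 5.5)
Both claims can be checked directly from the table \eqref{eq.R}--\eqref{eq.R2}. The only care needed is in organizing the $16$ basis vectors $e_{ij}$ by bidegree $(\a,\b)$.

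\emph{Preservation of the bigrading.} For every formula in \eqref{eq.R}--\eqref{eq.R2}, I will check that each output vector $e_{kl}$ has the same $(\a,\b)$ as the input $e_{ij}$. Recall $\a(e_{ij})=\a(e_i)+\a(e_j)$, and similarly for $\b$. The swaps $e_{ij}\mapsto e_{ji}$ trivially preserve both degrees, so the only terms to check are the non-swap ones:
\begin{itemize}
\item $e_{23}$ contributes $e_{41}$: both have bidegree $(1,1)$.
\item $e_{32}$ contributes $e_{41}$ and $e_{23}$: all of bidegree $(1,1)$.
\item $e_{41}$ contributes $e_{14},e_{23},e_{32},e_{41}$: all of bidegree $(1,1)$.
\end{itemize}

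\emph{Decomposition into blocks.} The bigrading therefore splits $W\otimes W$ into $R$-stable blocks:
\begin{itemize}
\item the $1$-dimensional blocks $\langle e_{11}\rangle$ of degree $(0,0)$, $\langle e_{22}\rangle$ of degree $(2,0)$, $\langle e_{33}\rangle$ of degree $(0,2)$, and $\langle e_{44}\rangle$ of degree $(2,2)$;
\item the $2$-dimensional blocks $\langle e_{12},e_{21}\rangle$ of degree $(1,0)$, $\langle e_{13},e_{31}\rangle$ of degree $(0,1)$, $\langle e_{24},e_{42}\rangle$ of degree $(2,1)$, and $\langle e_{34},e_{43}\rangle$ of degree $(1,2)$;
\item the $4$-dimensional block $\langle e_{14},e_{41},e_{23},e_{32}\rangle$ of degree $(1,1)$.
\end{itemize}

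\emph{Invertibility.} It suffices to show each block has nonzero determinant in $\BK$, with $r\in\BK^\times$.
\begin{itemize}
\item The $1\times1$ blocks are $-1$, $t_0$, $t_1$, and $-1$.
\item Each $2\times2$ block is triangular in the ordered basis $(e_{ij},e_{ji})$, up to a swap of columns. For example, $e_{12}\mapsto -t_0e_{21}$ and $e_{21}\mapsto -e_{12}+(t_0-1)e_{21}$ give the matrix $\begin{pmatrix}0&-1\\ -t_0& t_0-1\end{pmatrix}$, with determinant $-t_0$. In the same way the other three blocks have determinants $-t_1$, $-(rt_0t_1)(r^{-1}t_1^{-1})=-t_0$, and $-(r^{-1})(rt_1)=-t_1$.
\end{itemize}

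The main (though still routine) step is the $4\times4$ block. In the ordered basis $(e_{14},e_{23},e_{32},e_{41})$, the images are
\begin{align*}
e_{14}&\mapsto -t_0t_1e_{41},\\
e_{23}&\mapsto -rt_1e_{32}+(t_0-1)t_1e_{41},\\
e_{32}&\mapsto -r^{-1}t_1^{-1}e_{23}+r^{-1}(1-t_0)e_{41},\\
e_{41}&\mapsto -e_{14}+(t_1^{-1}-1)e_{23}+r(t_1-1)e_{32}+(t_0+t_1-2)e_{41}.
\end{align*}

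I will expand the determinant along the column of $e_{14}$. Its only nonzero entry is $-t_0t_1$, in the row of $e_{41}$. Deleting that row and column leaves the $3\times3$ matrix in the rows $e_{14},e_{23},e_{32}$ and the columns $e_{23},e_{32},e_{41}$.

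The row of $e_{14}$ in this minor has a single nonzero entry, namely $-1$ in the column of $e_{41}$. Expanding along it leaves the $2\times2$ block for $e_{23},e_{32}$, which is $\begin{pmatrix}0&-r^{-1}t_1^{-1}\\ -rt_1&0\end{pmatrix}$ with determinant $-1$.

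Hence the $4\times4$ determinant is $\pm t_0t_1$, which is a unit in $\BK$. Therefore $\det R_r$ is, up to sign, a monomial in $t_0,t_1$, hence nonzero, and $R_r$ is invertible.

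The only real risk is a sign or bookkeeping slip. The structural point is that each block's determinant reduces to a product of the "swap" coefficients, since the correction terms only involve $e_{41}$ or the diagonal.
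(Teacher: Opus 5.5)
Your proof is correct and follows the paper's argument: you read off preservation of the bigrading from \eqref{eq.R}--\eqref{eq.R2}, split $W\otimes W$ into the nine homogeneous blocks, and check that each block has a unit determinant. If you track the cofactor signs in your expansion of the $4\times4$ block, you get exactly $t_0t_1$, as in the paper's list, although only non-vanishing is needed for invertibility.
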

\begin{proof}
The first claim is read off from \eqref{eq.R}--\eqref{eq.R2}. Hence $R_r$ is block diagonal with respect to the
nine homogeneous components of $W\otimes W$, which have bases
\begin{equation}\label{eq.sectors}
\begin{gathered}
(e_{11});\quad (e_{21},e_{12});\quad (e_{31},e_{13});\quad (e_{22});\quad (e_{33});\quad
(e_{41},e_{23},e_{32},e_{14});\\ (e_{42},e_{24});\quad (e_{43},e_{34});\quad (e_{44}),
\end{gathered}
\end{equation}
of bidegrees $(0,0),(1,0),(0,1),(2,0),(0,2),(1,1),(2,1),(1,2),(2,2)$. The determinants of the nine blocks are
$-1$, $-t_0$, $-t_1$, $t_0$, $t_1$, $t_0t_1$, $-t_0$, $-t_1$, $-1$. 
\end{proof}

\subsection{Braid words and the invariant}
Let $h=\diag(-1,1,1,-1)\in\End(W)$, i.e.\ $h(e_i)=h_ie_i$ with $h_1=h_4=-1$, $h_2=h_3=1$. Note that
\begin{equation}\label{eq.hparity}
   h_i=-(-1)^{\a(e_i)+\b(e_i)} .
\end{equation}
A \emph{braid word} on $n$ strands is a finite sequence $w=s_1s_2\cdots s_c$ of letters
$s_j=\sigma_{\nu_j}^{\ve_j}$ with $1\le \nu_j\le n-1$ and $\ve_j\in\{\pm1\}$. We call $j=1,\dots,c$ the
\emph{crossings} of $w$. For a letter we put
\[
  \varrho_r(\sigma_i^{\pm1})=\id_W^{\otimes(i-1)}\otimes R_r^{\pm1}\otimes\id_W^{\otimes(n-i-1)}\in\End(W^{\otimes n}),\qquad
  \varrho_r(w)=\varrho_r(s_c)\cdots\varrho_r(s_2)\,\varrho_r(s_1),
\]
so that the first letter acts first. The \emph{closure} $\widehat w$ of $w$ is the oriented link obtained by
closing the braid diagram in which the crossings $s_1,\dots,s_c$ are met in this order along the orientation of the
strands, a letter $\sigma_i$ being a positive crossing. 

For a multi-index $I=(m_1,\dots,m_n)\in\{1,2,3,4\}^n$ let $e_I=e_{m_1}\otimes\dots\otimes e_{m_n}$, and for
$v\in W^{\otimes n}$ let $\langle e_I,v\rangle$ be the coefficient of $e_I$ in $v$.

\begin{definition}\label{def.V}
For a braid word $w$ on $n$ strands,
\[
   V(w;r)\;=\;\sum_{(m_2,\dots,m_n)\in\{1,2,3,4\}^{n-1}} h_{m_2}h_{m_3}\cdots h_{m_n}\;
   \bigl\langle e_{(1,m_2,\dots,m_n)},\ \varrho_r(w)\,e_{(1,m_2,\dots,m_n)}\bigr\rangle\ \in\BK .
\]
\end{definition}

In the notation of \cite[Remark~2.3]{GHKST}, $V(w;r)$ is the $(e_1,e_1)$ matrix entry of the operator
$\tr_{2,\dots,n}\bigl((\id_W\otimes h^{\otimes(n-1)})\circ\varrho_r(w)\bigr)\in\End(W)$. 

\begin{theorem}[{\cite[Lemmas~3.1, 3.2 and Remark~2.3]{GHKST}}]\label{thm.GHKST}
For $r=\pm1$ the operator $\tr_{2,\dots,n}\bigl((\id_W\otimes h^{\otimes(n-1)})\circ\varrho_r(w)\bigr)$ is a scalar multiple
of $\id_W$, and the scalar depends only on the oriented link $L=\widehat w$, the closure of $w$.
It is the $V_1$-polynomial $V_1(L)=V_1(L;t_0,t_1)$. In particular $V_1(\widehat w)=V(w;\pm1)$.
\end{theorem}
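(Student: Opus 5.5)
The plan is to verify that $(R_r,h)$ is an enhanced Yang--Baxter operator in Turaev's sense, up to the normalization built into Definition~\ref{def.V}, and then to deduce invariance under the Markov moves. Irreducibility of $W$ will give the scalar statement. The whole verification is a finite matrix computation on $W^{\otimes2}$ and $W^{\otimes3}$, and Lemma~\ref{lem.blocks} keeps it manageable.

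\textbf{Step 1: braid relations.} First I would check the Yang--Baxter relation
\[
(R\otimes\id)(\id\otimes R)(R\otimes\id)=(\id\otimes R)(R\otimes\id)(\id\otimes R)
\]
on $W^{\otimes 3}$ for $r=\pm1$. Far commutativity is automatic. By Lemma~\ref{lem.blocks}, both sides preserve the $(\a,\b)$ trigrading sum, so the check splits into bidegree sectors of $W^{\otimes3}$, each of small dimension. I would rather not check this by brute force. Instead I would cite the construction in \cite{GHKST}, where $R$ is the image of the universal $R$-matrix of $U_q\mathfrak{gl}(2|1)$, or equivalently of the Garoufalidis--Kashaev Nichols-algebra construction, under a gauge. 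The role of $r$ is a diagonal gauge $e_{ij}\mapsto r^{k(i,j)}e_{ij}$; this explains the restriction to $r=\pm1$, where the gauge is compatible with the closure. This yields a representation $\varrho_r$ of $B_n$, and $\varrho_r(w)$ depends only on the braid.

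\textbf{Step 2: the enhancement.} Next I would verify two identities.
\begin{itemize}
\item $[h\otimes h,R]=0$. This follows from \eqref{eq.hparity}, since $h$ is a function of the total parity $\a+\b$ and $R$ preserves the bigrading.
\item $\tr_2\bigl((\id\otimes h)R^{\pm1}\bigr)=\id_W$, possibly up to a fixed scalar. This is a direct computation from \eqref{eq.R}--\eqref{eq.R2}: for each $i$ one sums $h_m\langle e_{im},R\,e_{im}\rangle$ over $m$. For example, for $i=1$ one gets $h_1(-1)+0+0+0=1$, using only the diagonal coefficients.
\end{itemize}
The $R^{-1}$ case would be handled by inverting $R$ block by block.

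\textbf{Step 3: Markov invariance.} With Steps 1 and 2 in hand, the full twisted trace $\tr((h^{\otimes n})\varrho(w))$ is conjugation invariant (Markov I) and stabilization invariant (Markov II). It would, however, vanish, because $W$ is a super-dimension-zero module: $\tr h=0$. So one must use the partial trace, which is why $V$ is defined by fixing the first strand to $e_1$.

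\textbf{Step 4: the scalar claim, which I expect to be the main obstacle.} The operator
\[
\tr_{2,\dots,n}\bigl((\id\otimes h^{\otimes(n-1)})\varrho(w)\bigr)
\]
commutes with the image of $U_q\mathfrak{gl}(2|1)$ on $W$, because $h$ implements the pivotal element. Since $W$ is irreducible, Schur's lemma makes this operator a scalar. Independence of which strand is cut then follows by the standard argument for modified traces on ambidextrous modules (Geer--Patureau-Mirand). That argument is the input I expect to be the crux, and I would quote it from \cite{GHKST} rather than reprove it. Finally, the Markov moves applied to the cut-open $(1,1)$-tangle give invariance of the scalar, which is identified with $V_1$ by definition in \cite{GHKST}.
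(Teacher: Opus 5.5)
Your outline matches the argument behind the cited result. The identities of your Step~2 hold exactly, with no extra scalar, for both $R$ and $R^{-1}$, and they give right stabilization. The two inputs you defer to \cite{GHKST} are that the cut-open partial trace is scalar and that it does not depend on which strand is cut; the second is exactly what handles conjugation by $\sigma_1$, where the partial trace over strands $2,\dots,n$ is not cyclic. These are the properties (P$_1$) and (P$_2$) of \cite[Lemmas~3.1, 3.2]{GHKST} on which the paper relies.

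Two side remarks are inaccurate but harmless. By Lemma~\ref{lem.twist} and the bigrading, the partial-trace operator is independent of $r$ for every $r\in\BK^\times$, so the gauge does not explain the restriction $r=\pm1$. Also, nothing in the paper identifies this Garoufalidis--Kashaev matrix with a $U_q\mathfrak{gl}(2|1)$ braiding having $h$ as pivotal element, so your Schur's-lemma route to scalarity rests on an imported identification rather than on anything proved here.
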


\subsection{The parameter \texorpdfstring{$r$}{r} is a twist}

\begin{lemma}\label{lem.twist}
Let $F_r\in\End(W\otimes W)$ be the diagonal operator $F_r\,e_{ij}=r^{\b(e_i)\a(e_j)}\,e_{ij}$. Then
$R_r=F_rR_1F_r^{-1}$, and $V(w;r)=V(w;1)$ for every braid word $w$ and every $r\in\BK^\times$.
\end{lemma}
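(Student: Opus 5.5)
The plan has two steps. First check the conjugation identity $R_r=F_rR_1F_r^{-1}$ on basis vectors, using the explicit formulas \eqref{eq.R}--\eqref{eq.R2}. Then promote the two-strand twist $F_r$ to a diagonal operator $G_r$ on $W^{\otimes n}$ that conjugates $\varrho_1$ into $\varrho_r$ simultaneously for all generators. Since $G_r$ is diagonal, the diagonal matrix entries in Definition~\ref{def.V} will be unchanged.

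\emph{Step 1 (the identity on $W\otimes W$).} Since $F_r$ is diagonal, suppose $R_1e_{ij}=\sum c_{kl}e_{kl}$. Then
\[
F_rR_1F_r^{-1}e_{ij}=\sum c_{kl}\,r^{\b(e_k)\a(e_l)-\b(e_i)\a(e_j)}e_{kl}.
\]
So I only need to compare exponents with the powers of $r$ in the table. The exponent $\b(e_i)\a(e_j)$ is nonzero only for $e_{32},e_{34},e_{42},e_{44}$. Hence in all the lines of \eqref{eq.R} not involving these vectors as input or output, the twist is trivial, consistent with no $r$ appearing there. For the remaining lines I will check each one. For $e_{24}\mapsto e_{42}$ the factor is $r^{1}$. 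For $e_{42}$, the output $e_{24}$ gets $r^{-1}$ and the output $e_{42}$ gets $r^0$. For $e_{34}\mapsto e_{43}$ the factor is $r^{-1}$, and for $e_{43}\mapsto e_{34}$ it is $r$. For $e_{23}$, the output $e_{32}$ gets $r$ and the output $e_{41}$ gets $r^0$. For $e_{32}$, both outputs get $r^{-1}$. For $e_{41}$, only the output $e_{32}$ gets $r$. All of these match \eqref{eq.R}--\eqref{eq.R2}. This step is routine bookkeeping.

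\emph{Step 2 (globalizing the twist).} On $W^{\otimes n}$ define the diagonal operator
\[
G_r\,e_{(m_1,\dots,m_n)}=\prod_{a<b}r^{\b(e_{m_a})\a(e_{m_b})}\,e_{(m_1,\dots,m_n)}.
\]
I claim that $G_r\,\varrho_1(\sigma_i^{\pm1})\,G_r^{-1}=\varrho_r(\sigma_i^{\pm1})$. Split the product over pairs $a<b$ into three kinds. The pair $(a,b)=(i,i+1)$ contributes exactly the factor of $F_r$ on the tensor factors $i,i+1$. Pairs with neither index in $\{i,i+1\}$ are unaffected, because $\varrho_1(\sigma_i)$ does not change $m_a,m_b$. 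Pairs with exactly one index in $\{i,i+1\}$ contribute
\[
r^{\b(e_{m_a})(\a(e_{m_i})+\a(e_{m_{i+1}}))}\quad\text{for } a<i,\qquad
r^{(\b(e_{m_i})+\b(e_{m_{i+1}}))\a(e_{m_b})}\quad\text{for } b>i+1.
\]
These depend only on the total bidegree of the $(i,i+1)$ pair, which $R_1^{\pm1}$ preserves by Lemma~\ref{lem.blocks}. So these factors cancel in the conjugation, and what remains is $F_r$ on factors $i,i+1$. By Step~1 this gives $\varrho_r(\sigma_i)$. For $\sigma_i^{-1}$, invert the identity of Step~1.

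\emph{Conclusion.} Taking products gives $\varrho_r(w)=G_r\varrho_1(w)G_r^{-1}$. Since $G_r$ is diagonal in the basis $e_I$, $\langle e_I,\varrho_r(w)e_I\rangle=\langle e_I,\varrho_1(w)e_I\rangle$ for every $I$. The weights $h_{m_2}\cdots h_{m_n}$ do not depend on $r$, so $V(w;r)=V(w;1)$. The only point needing care is the cross-term argument in Step~2; it relies entirely on bigrading preservation, which is exactly why the twist is built from the gradings $\a,\b$.
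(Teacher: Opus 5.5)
Your proposal is correct and follows essentially the same route as the paper. The paper also checks the $r$-dependent coefficients against the exponent difference $\b(e_k)\a(e_l)-\b(e_i)\a(e_j)$, and then uses the global diagonal twist $\prod_{a<b}r^{\b(e_{m_a})\a(e_{m_b})}$. Its cross terms cancel by the bigrading preservation of Lemma~\ref{lem.blocks}, and diagonal conjugation leaves the diagonal entries in Definition~\ref{def.V} unchanged.
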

\begin{proof}
Put $\phi(i,j)=\b(e_i)\a(e_j)$; thus $\phi(3,2)=\phi(3,4)=\phi(4,2)=\phi(4,4)=1$ and $\phi(i,j)=0$
otherwise. The identity $R_r=F_rR_1F_r^{-1}$ says that the coefficient of $e_{kl}$ in $R_re_{ij}$ is
$r^{\phi(k,l)-\phi(i,j)}$ times its value at $r=1$. In \eqref{eq.R}--\eqref{eq.R2} the parameter $r$ occurs in
exactly eight coefficients, and one checks them one by one:
\[
\begin{array}{ll}
e_{23}\mapsto e_{32}:\ \ r^{\phi(3,2)-\phi(2,3)}=r, &\qquad e_{32}\mapsto e_{23}:\ \ r^{-1},\\
e_{32}\mapsto e_{41}:\ \ r^{\phi(4,1)-\phi(3,2)}=r^{-1}, &\qquad e_{41}\mapsto e_{32}:\ \ r,\\
e_{24}\mapsto e_{42}:\ \ r^{\phi(4,2)-\phi(2,4)}=r, &\qquad e_{42}\mapsto e_{24}:\ \ r^{-1},\\
e_{34}\mapsto e_{43}:\ \ r^{\phi(4,3)-\phi(3,4)}=r^{-1}, &\qquad e_{43}\mapsto e_{34}:\ \ r,
\end{array}
\]
while for all other non-zero coefficients $\phi(k,l)=\phi(i,j)$ (for instance $\phi(4,1)=\phi(2,3)=\phi(1,4)=0$).

Now let $F^{(n)}\in\End(W^{\otimes n})$ be the diagonal operator $F^{(n)}e_I=r^{\phi_n(I)}e_I$ with
$\phi_n(I)=\sum_{p<q}\b(e_{m_p})\a(e_{m_q})$ for $I=(m_1,\dots,m_n)$. Fix $k$ and split
\begin{align*}
 \phi_n(I)={}&\b(e_{m_k})\a(e_{m_{k+1}})
   +\sum_{p<k}\b(e_{m_p})\bigl(\a(e_{m_k})+\a(e_{m_{k+1}})\bigr)\\
   &+\sum_{q>k+1}\bigl(\b(e_{m_k})+\b(e_{m_{k+1}})\bigr)\a(e_{m_q})+(\text{terms without }m_k,m_{k+1}).
\end{align*}
The operator $\varrho_1(\sigma_k)$ changes only the $k$-th and $(k+1)$-st tensor factors and preserves their total
bidegree (Lemma~\ref{lem.blocks}); therefore only the first term changes, and
$F^{(n)}\varrho_1(\sigma_k^{\pm1})(F^{(n)})^{-1}=\id\otimes F_rR_1^{\pm1}F_r^{-1}\otimes\id=\varrho_r(\sigma_k^{\pm1})$.
Consequently $\varrho_r(w)=F^{(n)}\varrho_1(w)(F^{(n)})^{-1}$. Conjugation by a diagonal operator does not change
diagonal matrix entries, and $V(w;r)$ is a combination of diagonal entries of $\varrho_r(w)$.
\end{proof}

\begin{remark}[Convention]\label{rem.convention}
From now on $r=-1$, and we write $R=R_{-1}$, $\varrho=\varrho_{-1}$ and $V(w)=V(w;-1)$. By
Theorem~\ref{thm.GHKST}, $V(w)=V_1(\widehat w)$: the state sum $V(w)$ of a braid word is the $V_1$-polynomial of its
closure. We keep the notation $V(w)$ for the state sum itself, as given by Definition~\ref{def.V}.
\end{remark}

\section{Computations in the exterior algebra}\label{sec.exterior}

\subsection{Exterior powers of a matrix}

Let $u_1,\dots,u_N$ be an ordered basis of a $\BK$-vector space $U$, and let $\ext(U)$ be the exterior algebra
of $U$.
For a subset $S=\{s_1<\dots<s_k\}$ of $\{1,\dots,N\}$ put
\[
   u_S=u_{s_1}\wedge\dots\wedge u_{s_k},\qquad u_\emptyset=1 .
\]
The $u_S$ form a basis of $\ext(U)$; we write $\langle u_T,\omega\rangle$ for the coefficient of
$u_T$ in $\omega$. Let $M\in\End(U)$, with matrix $(M_{pq})$ defined by $Mu_q=\sum_pM_{pq}u_p$. We denote by
$\ext(M)$ the unique algebra endomorphism of $\ext(U)$ extending $M$:
\[
   \ext(M)(v_1\wedge\dots\wedge v_k)=Mv_1\wedge\dots\wedge Mv_k ,\qquad \ext(M)1=1 .
\]

\begin{lemma}\label{lem.minors}
Let $M,N\in\End(U)$.
\begin{enumerate}\renewcommand{\theenumi}{\roman{enumi}}
\item $\ext(MN)=\ext(M)\ext(N)$ and $\ext(1)=1$.
\item $\langle u_T,\ext(M)u_S\rangle=\det M[T|S]$ if $|T|=|S|$, and $0$ otherwise, where the determinant of the empty
      matrix is $1$.
\end{enumerate}
\end{lemma}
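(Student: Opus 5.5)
Part (i) follows from uniqueness of the algebra extension. Both $\ext(MN)$ and $\ext(M)\ext(N)$ are algebra endomorphisms of $\ext(U)$, and on degree-one elements each restricts to $MN$: we have $\ext(M)\ext(N)v=\ext(M)(Nv)=MNv$ for $v\in U$. Since $\ext(U)$ is generated as an algebra by $U$, an algebra endomorphism is determined by its restriction to $U$, so the two coincide. Likewise $\ext(1)$ is an algebra endomorphism restricting to the identity on $U$, hence $\ext(1)=1$. We also need that $\ext(M)$ exists at all. This is the universal property of the exterior algebra: the composite $U\to U\to\ext(U)$, $v\mapsto Mv$, satisfies $(Mv)\wedge(Mv)=0$, so it extends uniquely to an algebra map.

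For (ii), first treat degree. $\ext(M)$ maps $\ext^k(U)$ into $\ext^k(U)$, because $Mv_1\wedge\dots\wedge Mv_k$ is a product of $k$ degree-one elements. Hence $\ext(M)u_S$ is a combination of the $u_T$ with $|T|=|S|$, and the coefficient vanishes when $|T|\ne|S|$. For $S=\emptyset$ we have $\ext(M)1=1$, which matches the convention that the determinant of the empty matrix is $1$.

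Now let $S=\{s_1<\dots<s_k\}$ with $k\ge1$. Expand by multilinearity:
\[
\ext(M)u_S=\bigwedge_{i=1}^k\Bigl(\sum_{p}M_{p s_i}u_p\Bigr)=\sum_{p_1,\dots,p_k}M_{p_1s_1}\cdots M_{p_ks_k}\,u_{p_1}\wedge\dots\wedge u_{p_k}.
\]
Terms with a repeated index $p_i$ vanish. The remaining tuples $(p_1,\dots,p_k)$ are enumerations of some $k$-set $T=\{t_1<\dots<t_k\}$, say $p_i=t_{\pi(i)}$ for a permutation $\pi\in S_k$. For such a tuple, $u_{p_1}\wedge\dots\wedge u_{p_k}=\sgn(\pi)\,u_T$. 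Collecting the terms that produce $u_T$ gives
\[
\langle u_T,\ext(M)u_S\rangle=\sum_{\pi\in S_k}\sgn(\pi)\prod_{i=1}^k M_{t_{\pi(i)}s_i}.
\]
This is the Leibniz formula for $\det M[T|S]$, where $M[T|S]$ has $(a,b)$ entry $M_{t_a s_b}$.

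The only point needing care is the bookkeeping of the sign $\sgn(\pi)$. The $u_T$ form a basis, which is standard, so the coefficients are well-defined. Finally, the Leibniz sum as written runs over columns and permutes rows, and it agrees with the usual row expansion because $\det A=\det A^{\T}$. With this, (ii) is established.
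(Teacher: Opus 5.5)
Your proof is correct and follows essentially the same route as the paper: (i) holds because both sides are algebra endomorphisms agreeing on the generating subspace $U$, and (ii) follows by multilinear expansion, discarding repeated indices, and recognizing the signed sum as the Leibniz formula for $\det M[T|S]$. Your additional remarks make explicit what the paper leaves implicit: existence of $\ext(M)$ via the universal property, degree preservation for the case $|T|\ne|S|$, and the case $S=\emptyset$.
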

\begin{proof}
(i) Both sides are algebra endomorphisms that agree on $U$. \\
(ii) Let $S=\{s_1<\dots<s_k\}$. Expanding,
$\ext(M)u_S=\sum_{p_1,\dots,p_k}M_{p_1s_1}\cdots M_{p_ks_k}\,u_{p_1}\wedge\dots\wedge u_{p_k}$.
A term vanishes unless the $p_m$ are distinct, and then $\{p_1,\dots,p_k\}=T=\{t_1<\dots<t_k\}$ with
$p_m=t_{\pi(m)}$ for a permutation $\pi$, and $u_{p_1}\wedge\dots\wedge u_{p_k}=\sgn(\pi)\,u_T$. The coefficient of
$u_T$ is therefore $\sum_\pi\sgn(\pi)\prod_mM_{t_{\pi(m)}s_m}=\det M[T|S]$.
\end{proof}

\begin{lemma}[the supertrace is a determinant, {\cite[Appendix]{KauffmanSaleur}}]\label{lem.str}
For $M\in\End(U)$,
\[
   \sum_{S\subseteq\{1,\dots,N\}}(-1)^{|S|}\,\bigl\langle u_S,\ext(M)u_S\bigr\rangle\;=\;\sum_{S}(-1)^{|S|}\det M[S|S]\;=\;\det(1-M).
\]
\end{lemma}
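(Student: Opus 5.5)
The plan has two steps. The first equality, $\langle u_S,\ext(M)u_S\rangle=\det M[S|S]$, is Lemma~\ref{lem.minors}(ii) with $T=S$. So the real content is the second equality, the classical expansion
\[
\det(1-M)=\sum_{S\subseteq\{1,\dots,N\}}(-1)^{|S|}\det M[S|S].
\]
I will prove it by multilinearity of the determinant in the columns.

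Write the $q$-th column of $1-M$ as $e_q+(-m_q)$, where $e_q$ is the $q$-th standard basis column and $m_q$ is the $q$-th column of $M$. Expanding each column as a sum of these two pieces gives
\[
\det(1-M)=\sum_{S}\det C_S ,
\]
where $C_S$ has column $-m_q$ for $q\in S$ and column $e_q$ for $q\notin S$. Pull the scalar $-1$ out of each column indexed by $S$; this contributes $(-1)^{|S|}$.

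It remains to evaluate the determinant of the matrix with columns $m_q$ ($q\in S$) and $e_q$ ($q\notin S$). Expand by Laplace along the columns $e_q$, $q\notin S$, one at a time. Each such column has a single entry $1$, sitting on the diagonal at position $(q,q)$. The cofactor sign is $(-1)^{q+q}=+1$, and removing row $q$ and column $q$ keeps the remaining index set in its original order. After all these expansions, what remains is exactly $\det M[S|S]$, with no extra sign.

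Summing over $S$ gives the claim. The case $S=\emptyset$ contributes $\det 1=1$, consistent with the convention that the empty determinant is $1$.

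An alternative route, closer in spirit to the exterior algebra, is to triangularize $M$ over an algebraic closure. Then $\det M[S|S]$ summed over $|S|=k$ is $\operatorname{tr}\ext^k(M)=e_k(\lambda)$, the $k$-th elementary symmetric function of the eigenvalues, and $\sum_k(-1)^ke_k(\lambda)=\prod_i(1-\lambda_i)$. This uses basis-independence of the trace on $\ext^k U$, so the direct multilinear argument above is cleaner. The only point needing care there is the sign bookkeeping in the Laplace expansion, which is harmless because every deleted pivot lies on the diagonal.
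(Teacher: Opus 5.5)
Your proof is correct and follows essentially the same route as the paper: the first equality comes from Lemma~\ref{lem.minors}(ii), and the second from multilinearity of $\det(1-M)$ in the columns $u_q-Mu_q$, followed by expansion along the unit columns $q\notin S$ to obtain $\det M[S|S]$. Your check of the cofactor signs, using that every pivot lies on the diagonal, supplies the one detail the paper leaves implicit.
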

\begin{proof}
The first equality is Lemma~\ref{lem.minors} (ii). The $q$-th column of $1-M$ is $u_q-Mu_q$. By multilinearity of
the determinant in the columns, $\det(1-M)=\sum_S(-1)^{|S|}\det C^S$, where the $q$-th column of $C^S$ is
$Mu_q$ if $q\in S$ and $u_q$ if $q\notin S$. Expanding $\det C^S$ along the columns $u_q$, $q\notin S$, gives
$\det C^S=\det M[S|S]$.
\end{proof}

\begin{lemma}[compression]\label{lem.compress}
Let $Q\subseteq\{1,\dots,N\}$ and $M^\circ=M[Q|Q]$. Then
\[
   \sum_{S\subseteq Q}(-1)^{|S|}\,\langle u_S,\ext(M)u_S\rangle=\det(1-M^\circ).
\]
\end{lemma}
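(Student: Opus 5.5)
The plan is to reduce the compression lemma to Lemma~\ref{lem.str} applied to the smaller space spanned by $\{u_q: q\in Q\}$. The only input needed is Lemma~\ref{lem.minors}~(ii). For $S\subseteq Q$ it gives
\[
  \langle u_S,\ext(M)u_S\rangle=\det M[S|S].
\]
The principal submatrix $M[S|S]$ involves only rows and columns indexed by $S\subseteq Q$. Hence it equals $M^\circ[S|S]$, where $M^\circ=M[Q|Q]$ is regarded as a matrix indexed by $Q$ in its induced order. This step uses only that taking a principal submatrix is transitive, and that the order of $S$ inherited from $\{1,\dots,N\}$ agrees with the one inherited from $Q$. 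So there is no sign issue.

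Next, let $U^\circ=\spn\{u_q:q\in Q\}$ with the ordered basis $(u_q)_{q\in Q}$, and view $M^\circ$ as an element of $\End(U^\circ)$. Lemma~\ref{lem.str}, applied to $U^\circ$ and $M^\circ$, gives
\[
  \sum_{S\subseteq Q}(-1)^{|S|}\det M^\circ[S|S]=\det(1-M^\circ).
\]
Here $1$ is the identity on $U^\circ$, i.e.\ the $|Q|\times|Q|$ identity matrix. Combining this with the first step yields the claim.

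Alternatively, one can apply the column-expansion argument from the proof of Lemma~\ref{lem.str} directly to $1-M^\circ$. There is no real obstacle. The only point to state carefully is that $\ext(M)$ need not preserve $\ext(U^\circ)$. This does not matter, because only the diagonal coefficients $\langle u_S,\ext(M)u_S\rangle$ with $S\subseteq Q$ enter, and these are minors of $M$ that lie entirely inside $M^\circ$.
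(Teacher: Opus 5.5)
Your proposal is correct and follows the paper's proof. For $S\subseteq Q$ you rewrite $\langle u_S,\ext(M)u_S\rangle=\det M[S|S]=\det M^\circ[S|S]$ via Lemma~\ref{lem.minors}~(ii), and then apply Lemma~\ref{lem.str} to $M^\circ$.
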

\begin{proof}
For $S\subseteq Q$ we have $\langle u_S,\ext(M)u_S\rangle=\det M[S|S]=\det M^\circ[S|S]$; apply
Lemma~\ref{lem.str} to $M^\circ$.
\end{proof}

\begin{lemma}[rank-one perturbations]\label{lem.rankone}
Let $M\in\End(U)$, let $p$ be a column vector and $\ell^{\T}$ a row vector, and let $z$ be an indeterminate. Then
every matrix entry of $\ext(M+z\,p\ell^{\T})$ is a polynomial of degree $\le1$ in $z$. In particular
$\det(M+z\,p\ell^{\T})$ has degree $\le 1$ in $z$.
\end{lemma}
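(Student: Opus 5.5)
By Lemma~\ref{lem.minors}(ii), the matrix entries of $\ext(M+z\,p\ell^{\T})$ in the basis $u_S$ are the minors $\det(M+z\,p\ell^{\T})[T|S]$ with $|T|=|S|$ (all other entries vanish). The plan is therefore to reduce everything to one statement: a square matrix plus $z$ times a rank-one matrix has a determinant of degree $\le1$ in $z$. The second claim of the lemma is the special case $T=S=\{1,\dots,N\}$.

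The reduction works because submatrices of a rank-one perturbation are again rank-one perturbations. Indeed,
\[
  (M+z\,p\ell^{\T})[T|S]=M[T|S]+z\,p_T\,\ell_S^{\T},
\]
where $p_T$ is the restriction of $p$ to the rows in $T$ and $\ell_S$ is the restriction of $\ell$ to the columns in $S$. So it suffices to show that $\det(A+z\,ab^{\T})$ has degree $\le1$ in $z$ for any $k\times k$ matrix $A$ and vectors $a,b$.

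For this, the $q$-th column of $A+z\,ab^{\T}$ is $A_q+z\,b_q\,a$, where $A_q$ is the $q$-th column of $A$. Expand the determinant multilinearly in the columns, choosing for each column either $A_q$ or $z\,b_q\,a$. Any term that uses the vector $a$ in two or more columns has two proportional columns and vanishes. The surviving terms use $a$ in at most one column, so they are of degree $0$ or $1$ in $z$, giving
\[
  \det(A+z\,ab^{\T})=\det A+z\sum_q b_q\det\bigl(A\text{ with column }q\text{ replaced by }a\bigr).
\]

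None of these steps is a serious obstacle. The only points needing care are the bookkeeping that a submatrix of a rank-one perturbation is still rank-one, and the convention that the empty determinant equals $1$, which covers the case $S=T=\emptyset$.
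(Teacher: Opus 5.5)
Your proposal is correct and follows the paper's proof essentially verbatim. Like the paper, you use Lemma~\ref{lem.minors}(ii) to reduce to the minors $\det\bigl(M[T|S]+z\,p_T\ell_S^{\T}\bigr)$, then expand multilinearly in the columns and discard every term in which two or more columns are proportional to $p_T$.
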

\begin{proof}
By Lemma~\ref{lem.minors} (ii) the entries are the minors $\det\bigl(M[T|S]+z\,p_T\ell_S^{\T}\bigr)$, where $p_T$,
$\ell_S$ are the restrictions of $p$, $\ell$. The $m$-th column of this matrix is (the $m$-th column of $M[T|S]$)
$+\,z\,\ell_{s_m}p_T$. Expanding by multilinearity, every term containing two or more columns proportional to
$p_T$ vanishes.
\end{proof}

\subsection{Generalizing the Kauffman--Saleur construction: the Fock space of \texorpdfstring{$n$}{n} strands}\label{ssec.fock}
Let $U_n$ be the $\BK$-vector space with ordered basis
\[
  (u_1,u_2,\dots,u_{2n})=(a_1,b_1,a_2,b_2,\dots,a_n,b_n),
\]
and $\CF_n=\ext(U_n)$. We call $a_k,b_k$ the two \emph{fermions on the $k$-th strand}.
Put $\omega_1(a,b)=1$, $\omega_2(a,b)=a$, $\omega_3(a,b)=b$, $\omega_4(a,b)=a\wedge b$ and define the linear map
\begin{equation}\label{eq.Phi}
  \Phi_n\colon W^{\otimes n}\to\CF_n,\qquad
  \Phi_n(e_{m_1}\otimes\dots\otimes e_{m_n})=\omega_{m_1}(a_1,b_1)\wedge\omega_{m_2}(a_2,b_2)\wedge\dots\wedge\omega_{m_n}(a_n,b_n).
\end{equation}
Thus $e_1$ is the vacuum, $e_2$ and $e_3$ are the states with one fermion $a$, resp.\ $b$, and $e_4$ is the doubly
occupied state.

\begin{lemma}\label{lem.fock}
Let $n\ge1$.
\begin{enumerate}\renewcommand{\theenumi}{\roman{enumi}}
\item $\Phi_n$ maps the basis $(e_I)$ bijectively onto the basis $(u_S)$ of $\CF_n$; hence it is an isomorphism.
      For $I=(m_1,\dots,m_n)$ the element $\Phi_n(e_I)$ has degree $\sum_k\bigl(\a(e_{m_k})+\b(e_{m_k})\bigr)$, and
      $\Phi_n(e_I)=u_S$ with $S\subseteq Q:=\{3,4,\dots,2n\}$ if and only if $m_1=1$.
\item \textup{(Locality)} Let $M_0\in\End(U_2)$ be a $4\times4$ matrix, let $1\le k\le n-1$, and let
      $M_0^{(k)}\in\End(U_n)$ act as $M_0$ on $(a_k,b_k,a_{k+1},b_{k+1})$ and as the identity on the other
      basis vectors. Then
      \[
         \Phi_n^{-1}\,\ext\bigl(M_0^{(k)}\bigr)\,\Phi_n=\id_W^{\otimes(k-1)}\otimes\bigl(\Phi_2^{-1}\ext(M_0)\Phi_2\bigr)\otimes\id_W^{\otimes(n-k-1)} .
      \]
\end{enumerate}
\end{lemma}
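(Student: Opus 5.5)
Part (i) is bookkeeping with the ordered basis. For a multi-index $I=(m_1,\dots,m_n)$, each factor $\omega_{m_k}(a_k,b_k)$ is one of $1,a_k,b_k,a_k\wedge b_k$. These factors are written in the order of the basis $(a_1,b_1,\dots,a_n,b_n)$, so the wedge product $\Phi_n(e_I)$ equals $u_S$ with no sign. Here $S$ is the set of positions $2k-1$ for $m_k\in\{2,4\}$ and $2k$ for $m_k\in\{3,4\}$. The assignment $I\mapsto S$ is a bijection $\{1,2,3,4\}^n\to 2^{\{1,\dots,2n\}}$, because on each strand the four choices of $m_k$ correspond exactly to the four subsets of $\{2k-1,2k\}$. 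Hence $\Phi_n$ sends a basis bijectively to a basis. The degree of $u_S$ is $|S|=\sum_k(\a(e_{m_k})+\b(e_{m_k}))$, since $\a$ counts whether $a_k$ is present and $\b$ whether $b_k$ is present. Finally, $S\subseteq Q=\{3,\dots,2n\}$ exactly when neither $1$ nor $2$ lies in $S$, that is, when $a_1$ and $b_1$ are both absent, which means $m_1=1$.

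For (ii), I check the identity on basis vectors $e_I$. Write $\Phi_n(e_I)=A\wedge C\wedge D$, where $A=\omega_{m_1}\wedge\dots\wedge\omega_{m_{k-1}}$ involves only strands $<k$, $C=\omega_{m_k}(a_k,b_k)\wedge\omega_{m_{k+1}}(a_{k+1},b_{k+1})$, and $D$ involves only strands $>k+1$. The map $\ext(M_0^{(k)})$ is an algebra homomorphism that fixes the generators of strands other than $k,k+1$. It therefore fixes $A$ and $D$ and sends $C$ to $\ext(M_0)$ applied to $C$, viewed inside the subalgebra generated by $a_k,b_k,a_{k+1},b_{k+1}$, which is a copy of $\CF_2$. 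So
\[
\ext\bigl(M_0^{(k)}\bigr)\Phi_n(e_I)=A\wedge\iota_k\bigl(\ext(M_0)\Phi_2(e_{m_k}\otimes e_{m_{k+1}})\bigr)\wedge D,
\]
where $\iota_k\colon\CF_2\to\CF_n$ is the algebra map $a_1\mapsto a_k$, $b_1\mapsto b_k$, $a_2\mapsto a_{k+1}$, $b_2\mapsto b_{k+1}$.

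Next, expand $\ext(M_0)\Phi_2(e_{m_k}\otimes e_{m_{k+1}})=\sum_{p,q}c_{pq}\,\Phi_2(e_p\otimes e_q)$. Then $\iota_k\Phi_2(e_p\otimes e_q)=\omega_p(a_k,b_k)\wedge\omega_q(a_{k+1},b_{k+1})$, and wedging with $A$ on the left and $D$ on the right gives exactly $\Phi_n(e_{m_1}\otimes\dots\otimes e_p\otimes e_q\otimes\dots\otimes e_{m_n})$. No reordering, and hence no sign, is needed because the three blocks are already in basis order. This is precisely the action of the right-hand side of (ii) on $e_I$.

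The only point needing care, and so the main obstacle, is the absence of Koszul signs. Since $A$ stays on the left of $C$, and $\ext(M_0^{(k)})$ is multiplicative without any graded twist, no sign ever arises. The parity of the middle part may change when $\ext(M_0)$ has parity-mixing entries, but this is harmless: we never commute $C$ past anything.
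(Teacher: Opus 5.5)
Your proposal is correct and follows the paper's proof essentially verbatim. In (i) the wedge is already in basis order, so it equals $u_S$ with $S$ read off strand by strand. In (ii) you split $\Phi_n(e_I)$ into left, middle and right blocks and use that $\ext(M_0^{(k)})$ is an algebra homomorphism fixing the outer generators; your map $\iota_k$ makes the paper's ``renaming'' precise.

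One small correction to your last paragraph: $\ext(M_0)$ maps $\ext^k(U_2)$ into $\ext^k(U_2)$, so the degree, and hence the parity, of the middle block never changes. The sign issue is therefore vacuous for this reason too, not only because you never commute $C$ past anything.
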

\begin{proof}
(i) The wedge product in \eqref{eq.Phi} is a product of basis vectors $u_q$ with increasing indices, so it is
$u_S$ where $S$ records which of $a_k,b_k$ occur; conversely $S$ determines $I$. The rest is clear.

(ii) Write $\Phi_n(e_I)=\omega_<\wedge\omega_{\rm mid}\wedge\omega_>$, where $\omega_<$ is a product of the
$a_p,b_p$ with $p<k$, $\omega_>$ is a product of those with $p>k+1$, and
\[
   \omega_{\rm mid}=\omega_{m_k}(a_k,b_k)\wedge\omega_{m_{k+1}}(a_{k+1},b_{k+1}).
\]
Since $\ext(M_0^{(k)})$ is an algebra homomorphism which fixes the $a_p,b_p$ with $p\notin\{k,k+1\}$,
\[
  \ext\bigl(M_0^{(k)}\bigr)\bigl(\omega_<\wedge\omega_{\rm mid}\wedge\omega_>\bigr)
  =\omega_<\wedge\ext\bigl(M_0^{(k)}\bigr)(\omega_{\rm mid})\wedge\omega_> ,
\]
Moreover $\ext(M_0^{(k)})(\omega_{\rm mid})$ is obtained from $\ext(M_0)\Phi_2(e_{m_k}\otimes e_{m_{k+1}})$ by renaming
$a_1,b_1,a_2,b_2$ as $a_k,b_k,a_{k+1},b_{k+1}$. This is the claim.
\end{proof}

For $n\times n$ matrices $A$ and $B$ we write $A\oplus B\in\End(U_n)$ for the endomorphism which acts by $A$ on
$(a_1,\dots,a_n)$ and by $B$ on $(b_1,\dots,b_n)$:
\[
   (A\oplus B)\,a_q=\sum_pA_{pq}\,a_p,\qquad (A\oplus B)\,b_q=\sum_pB_{pq}\,b_p .
\]
Clearly $(A_1\oplus B_1)(A_2\oplus B_2)=A_1A_2\oplus B_1B_2$, and for $Q=\{3,4,\dots,2n\}$, $$(A\oplus B)[Q|Q]=A'\oplus B'$$ in the sense that it is the
corresponding endomorphism of $\spn(a_k,b_k\,;\,k\ge2)$ built from the matrices $A',B'$ with first row and column
removed. In particular
\begin{equation}\label{eq.detsum}
   \det\bigl(1-(A\oplus B)[Q|Q]\bigr)=\det(1-A')\,\det(1-B') .
\end{equation}

\section{Expressing the \texorpdfstring{$R$}{R}-matrix in the Fock space}\label{sec.rmatrix}

\subsection{The free two-flavor operators}
Let
\begin{equation}\label{eq.rho}
  \rho(x)=\begin{pmatrix}1-x&x\\1&0\end{pmatrix},\qquad
  \rho^-(x)=\begin{pmatrix}0&1\\x&1-x\end{pmatrix},\qquad \rho(x)\,\rho^-(x^{-1})=1 .
\end{equation}
$\rho(t)$ is the Burau matrix of a positive crossing and $\rho^-(t^{-1})=\rho(t)^{-1}$ that of a negative
crossing. With the notation of Section~\ref{ssec.fock} for $n=2$, define
\begin{equation}\label{eq.G}
  \Ga(x,y)=\Phi_2^{-1}\,\ext\bigl(\rho(x)\oplus\rho(y)^{\T}\bigr)\,\Phi_2,\qquad
  \Gm(x,y)=\Phi_2^{-1}\,\ext\bigl(\rho^-(x)\oplus\rho^-(y)^{\T}\bigr)\,\Phi_2 .
\end{equation}
These are endomorphisms of $W\otimes W$ whose matrix entries are polynomials in $x,y$. Explicitly,
$\Ga(x,y)$ is $\ext(M)$ and $\Gm(x,y)$ is $\ext(M^-)$ for the endomorphisms of $U_2=\spn(a_1,b_1,a_2,b_2)$
\begin{equation}\label{eq.MM}
\begin{aligned}
 Ma_1&=(1-x)a_1+a_2, & Ma_2&=x\,a_1, &\qquad M^-a_1&=x\,a_2, & M^-a_2&=a_1+(1-x)a_2,\\
 Mb_1&=(1-y)b_1+y\,b_2, & Mb_2&=b_1, &\qquad M^-b_1&=b_2, & M^-b_2&=y\,b_1+(1-y)b_2 .
\end{aligned}
\end{equation}
By \eqref{eq.rho} and Lemma~\ref{lem.minors} (i),
\begin{equation}\label{eq.Ginv}
   \Gm(x,y)=\Ga(x^{-1},y^{-1})^{-1}.
\end{equation}
Since $\Ga(x,y)$ is the exterior power of the direct sum of two Burau matrices, it is exactly the tensor
product (in the graded sense) of two copies of the Kauffman--Saleur $R$-matrix \cite[(5.9)]{KauffmanSaleur}, with
parameters $x$ and $y$, the second one transposed.

\subsection{Statement}
Let $K\in\End(W\otimes W)$ be the diagonal operator
\begin{equation}\label{eq.K}
   K\,e_{ij}=t_1^{\b(e_j)}\,e_{ij},\qquad\text{and put}\qquad \Rp=-K\,R\,K^{-1}\qquad(R=R_{-1}).
\end{equation}
For a polynomial $f=f(x,y)=f_{00}+x\,f_{10}+y\,f_{01}+xy\,f_{11}$ of degree $\le1$ in each of $x$ and $y$, with
coefficients $f_{ab}$ in a $\BK$-vector space, and for $u,v\in\BK$, define
\begin{equation}\label{eq.E1}
   \BE_{u,v}[f]=f_{00}+u\,f_{10}+v\,f_{01}+f_{11}\;=\;f(u,v)+(1-uv)\,f_{11}.
\end{equation}
Thus $\BE_{u,v}$ is the linear map uniquely determined by $1\mapsto1$, $x\mapsto u$, $y\mapsto v$ and $xy\mapsto1$.
It is applied to matrices entry by entry.

\begin{theorem}\label{thm.A}
All matrix entries of $\Ga(x,y)$ and $\Gm(x,y)$ have degree $\le1$ in each of $x$ and $y$, and
\begin{enumerate}\renewcommand{\theenumi}{\roman{enumi}}
\item $\Rp=\BE_{t_0,t_1}\bigl[\Ga(x,y)\bigr]$;
\item $\Rp^{-1}=\BE_{t_0^{-1},t_1^{-1}}\bigl[\Gm(x,y)\bigr]$.
\end{enumerate}
\end{theorem}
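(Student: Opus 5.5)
The plan is a direct block-by-block computation. By Lemma~\ref{lem.fock}(i), $\Phi_2$ sends $e_{m_1}\otimes e_{m_2}$ to a monomial whose $a$-count is $\a$ and whose $b$-count is $\b$. The endomorphisms $M$ and $M^-$ of \eqref{eq.MM} preserve $\spn(a_1,a_2)$ and $\spn(b_1,b_2)$, so $\Ga$ and $\Gm$ preserve the bigrading $(\a,\b)$. The same holds for $R$ by Lemma~\ref{lem.blocks}, and for $K$ because it is diagonal. It therefore suffices to compare the nine blocks \eqref{eq.sectors} one at a time.

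\textbf{Degree bound.} By Lemma~\ref{lem.minors}(ii), every entry of $\ext(M)$ is a minor $\det M[T|S]$. Since $M=\rho(x)\oplus\rho(y)^{\T}$ is block diagonal in the $a$- and $b$-variables, such a minor factors as an $a$-minor of $\rho(x)$ times a $b$-minor of $\rho(y)^{\T}$. Each factor has degree $\le1$ in its variable. This is clear for $1\times1$ minors, and the $2\times2$ minors are $\det\rho(x)=-x$ and $\det\rho(y)^{\T}=-y$. Hence every entry of $\Ga$ has degree $\le1$ in each of $x$ and $y$. The same argument works for $\Gm$, using $\det\rho^-(x)=-x$.

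\textbf{Part (i).} Write the relevant data explicitly:
\begin{itemize}
\item $K$ multiplies $e_{ij}$ by $t_1^{\b(e_j)}$;
\item conjugation by $K$ rescales the coefficient of $e_{kl}$ in $Re_{ij}$ by $t_1^{\b(e_l)-\b(e_j)}$;
\item $\Rp=-KRK^{-1}$ with $r=-1$.
\end{itemize}
On the one-dimensional sectors the check is immediate:
\begin{itemize}
\item $e_{11}$: $\Rp=1$, and $\Ga$ gives $1$.
\item $e_{22}$: $\Ga$ gives $\det\rho(x)\cdot a_1\wedge a_2=-x$, so $\BE$ gives $-t_0$, matching $-R=-t_0$.
\item $e_{33}$: similarly $-t_1$.
\item $e_{44}$: $a_1b_1a_2b_2\mapsto\det M\cdot(\ldots)=xy\cdot(\text{sign from reordering})$, and $\BE[xy]=1$ matches $-R=1$. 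Here one must track the sign of reordering $a_1\wedge b_1\wedge a_2\wedge b_2$.
\end{itemize}
On the two-dimensional sectors $(1,0)$, $(0,1)$, $(2,1)$ and $(1,2)$, each $\Ga$ entry involves only one of $x$ or $y$ linearly. Then $\BE$ is simply evaluation, and one compares with $R$ after the $K$-gauge. For example, the $(0,1)$ sector uses $\rho(y)^{\T}$, which produces $-t_1e_{31}$ from $e_{13}$ as required. In these sectors the sign conventions of $\Phi_2$ (moving $b_1$ past $a_2$) are what need care.

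The substantive case is the four-dimensional sector $(1,1)$, spanned by $e_{41},e_{23},e_{32},e_{14}$. Here the $\Ga$ entries are products of an $a$-minor and a $b$-minor, so $xy$ genuinely appears. The $xy$ coefficient contributes the correction $(1-t_0t_1)f_{11}$ in \eqref{eq.E1}, and this is exactly what must reproduce the non-free terms such as $(t_0-1)t_1e_{41}$ and $(t_0+t_1-2)e_{41}$ in \eqref{eq.R}--\eqref{eq.R2}. I would compute the $4\times4$ block of $\ext(M)$ as the tensor product of the $2\times2$ matrices $\rho(x)$ and $\rho(y)^{\T}$, with Koszul signs. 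I would then apply $\BE$ and conjugate by $K$, which rescales by $t_1^{\pm1}$ according to $\b$ of the second factor. Finally I would compare all sixteen entries with the $r=-1$ values of $R$. I expect this sector to be the main obstacle, mainly in getting the Koszul signs and the $K$-rescaling consistent.

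\textbf{Part (ii).} One option is to redo the computation for $\Gm$. It is cleaner to argue structurally. Inverting \eqref{eq.R} blockwise is tedious, so instead I would verify $\BE_{t_0,t_1}[\Ga]\cdot\BE_{t_0^{-1},t_1^{-1}}[\Gm]=1$ sector by sector. On all sectors except $(1,1)$, $\BE$ acts as evaluation, so this follows from \eqref{eq.Ginv}. On the $(1,1)$ block I would check the product of the two explicit $4\times4$ matrices directly. Since (i) is already established, this yields $\Rp^{-1}$.
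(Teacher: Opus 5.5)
Your degree bound, obtained by factoring each minor $\det M[T|S]$ into an $a$-minor of $\rho(x)$ times a $b$-minor of $\rho(y)^{\T}$, is correct. Your block-by-block plan for (i) is also the paper's approach.

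\textbf{The gap.} Your treatment of the remaining sectors rests on a false claim. In the sectors $(2,1)$, $(1,2)$ and $(2,2)$ one flavour is doubly occupied. Its $2\times2$ minor $\det\rho(x)=-x$ (resp.\ $-y$) then multiplies a minor of the other flavour, so $xy$ genuinely occurs. By Table~\ref{tab.G},
\[
\Ga e_{42}=-x(1-y)e_{42}+xy\,e_{24},\quad
\Ga e_{43}=-(1-x)y\,e_{43}+y\,e_{34},\quad
\Ga e_{34}=xy\,e_{43},\quad
\Ga e_{44}=xy\,e_{44}.
\]
So $\BE_{t_0,t_1}$ is not evaluation on these sectors. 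For example, $\BE_{t_0,t_1}[\Ga]e_{42}=(1-t_0)e_{42}+e_{24}$, which is indeed $\Rp e_{42}$. By contrast, $\Ga(t_0,t_1)e_{42}=-t_0(1-t_1)e_{42}+t_0t_1e_{24}$. The quartic term $D=\Ga_{11}$ of \eqref{eq.D} lives on the sectors $(1,1),(2,1),(1,2),(2,2)$, not only on $(1,1)$. In (i) this does no harm once you compute with the rule $xy\mapsto1$. The comparison you actually describe, ``$\BE$ is simply evaluation'', would fail there.

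\textbf{Why (ii) breaks.} In (ii) this is a real gap. From \eqref{eq.Ginv} you get $\Ga(t_0,t_1)\,\Gm(t_0^{-1},t_1^{-1})=1$. On the sectors $(2,1),(1,2),(2,2)$, however, $\BE_{t_0,t_1}[\Ga]\neq\Ga(t_0,t_1)$, so nothing follows there about $\BE_{t_0,t_1}[\Ga]\,\BE_{t_0^{-1},t_1^{-1}}[\Gm]$.

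\textbf{Two ways to fix it.}
\begin{enumerate}
\item Check these blocks directly, as the paper does. After applying $\BE$, the blocks on $(e_{42},e_{24})$ and $(e_{43},e_{34})$ coincide with those on $(e_{21},e_{12})$ and $(e_{31},e_{13})$, and $e_{44}\mapsto e_{44}$ for both operators.
\item Repair your structural argument. On a sector where the $a$-flavour is full, every entry has the form $x\,f(y)$ with $\deg f\le1$. Since $\BE_{u,v}[x\,f(y)]=u\,f(u^{-1})$, the map $\BE_{u,v}$ acts there as evaluation at $(u,u^{-1})$. Likewise it acts as evaluation at $(v^{-1},v)$ where the $b$-flavour is full; on $(2,2)$ both descriptions agree. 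Then $\Ga$ is evaluated at $(t_0,t_0^{-1})$ (resp.\ $(t_1^{-1},t_1)$) and $\Gm$ at the inverse point, so \eqref{eq.Ginv} does apply.
\end{enumerate}
With the second fix your route to (ii) is valid and avoids writing out \eqref{eq.Gmlist} outside the $(1,1)$ sector. The $4\times4$ block on $(e_{41},e_{23},e_{32},e_{14})$ still has to be multiplied out by hand, as in the paper.
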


\subsection{Proof of Theorem~\ref{thm.A} \textup{(i)}}
We compute $\Ga(x,y)e_{ij}$ for the sixteen basis vectors, using \eqref{eq.Phi}, \eqref{eq.MM} and the rule
$\ext(M)(v_1\wedge\dots\wedge v_k)=Mv_1\wedge\dots\wedge Mv_k$. Recall that
\[
\begin{gathered}
 \Phi_2(e_{21})=a_1,\quad \Phi_2(e_{12})=a_2,\quad \Phi_2(e_{31})=b_1,\quad \Phi_2(e_{13})=b_2,\\
 \Phi_2(e_{22})=a_1\wedge a_2,\quad \Phi_2(e_{33})=b_1\wedge b_2,\quad \Phi_2(e_{41})=a_1\wedge b_1,\quad \Phi_2(e_{23})=a_1\wedge b_2,\\
 \Phi_2(e_{32})=b_1\wedge a_2,\quad \Phi_2(e_{14})=a_2\wedge b_2,\quad
 \Phi_2(e_{42})=a_1\wedge b_1\wedge a_2,\quad \Phi_2(e_{24})=a_1\wedge a_2\wedge b_2,\\
 \Phi_2(e_{43})=a_1\wedge b_1\wedge b_2,\quad \Phi_2(e_{34})=b_1\wedge a_2\wedge b_2,
\end{gathered}
\]
$\Phi_2(e_{11})=1$ and $\Phi_2(e_{44})=a_1\wedge b_1\wedge a_2\wedge b_2$.

\smallskip\noindent\emph{Degrees $0$ and $1$.} These are read off from \eqref{eq.MM}:
$\Ga e_{11}=e_{11}$, $\Ga e_{21}=(1-x)e_{21}+e_{12}$, $\Ga e_{12}=x\,e_{21}$, $\Ga e_{31}=(1-y)e_{31}+y\,e_{13}$,
$\Ga e_{13}=e_{31}$.

\smallskip\noindent\emph{Degree $2$.} $Ma_1\wedge Ma_2=((1-x)a_1+a_2)\wedge x\,a_1=-x\,a_1\wedge a_2$ and
$Mb_1\wedge Mb_2=((1-y)b_1+y\,b_2)\wedge b_1=-y\,b_1\wedge b_2$, so $\Ga e_{22}=-x\,e_{22}$,
$\Ga e_{33}=-y\,e_{33}$. Next, using $a_2\wedge b_1=-\Phi_2(e_{32})$,
\begin{align*}
 Ma_1\wedge Mb_1&=\bigl((1-x)a_1+a_2\bigr)\wedge\bigl((1-y)b_1+y\,b_2\bigr)\\
   &=(1-x)(1-y)\,a_1\wedge b_1+(1-x)y\,a_1\wedge b_2+(1-y)\,a_2\wedge b_1+y\,a_2\wedge b_2,\\
 Ma_1\wedge Mb_2&=\bigl((1-x)a_1+a_2\bigr)\wedge b_1=(1-x)\,a_1\wedge b_1+a_2\wedge b_1,\\
 Mb_1\wedge Ma_2&=\bigl((1-y)b_1+y\,b_2\bigr)\wedge x\,a_1=-x(1-y)\,a_1\wedge b_1-xy\,a_1\wedge b_2,\\
 Ma_2\wedge Mb_2&=x\,a_1\wedge b_1 .
\end{align*}

\smallskip\noindent\emph{Degrees $3$ and $4$.} Using $Ma_1\wedge Ma_2=-x\,a_1\wedge a_2$,
$Mb_1\wedge Mb_2=-y\,b_1\wedge b_2$, and the sign rules
$a_1\wedge a_2\wedge b_1=-\Phi_2(e_{42})$, $a_2\wedge b_1\wedge b_2=-\Phi_2(e_{34})$,
$b_2\wedge a_1\wedge b_1=\Phi_2(e_{43})$:
\begin{align*}
 Ma_1\wedge Mb_1\wedge Ma_2&=-(Ma_1\wedge Ma_2)\wedge Mb_1=x\,a_1\wedge a_2\wedge\bigl((1-y)b_1+y\,b_2\bigr)\\
     &=-x(1-y)\,\Phi_2(e_{42})+xy\,\Phi_2(e_{24}),\\
 Ma_1\wedge Ma_2\wedge Mb_2&=-x\,a_1\wedge a_2\wedge b_1=x\,\Phi_2(e_{42}),\\
 Ma_1\wedge Mb_1\wedge Mb_2&=-y\,\bigl((1-x)a_1+a_2\bigr)\wedge b_1\wedge b_2
     =-(1-x)y\,\Phi_2(e_{43})+y\,\Phi_2(e_{34}),\\
 Mb_1\wedge Ma_2\wedge Mb_2&=\bigl((1-y)b_1+y\,b_2\bigr)\wedge x\,a_1\wedge b_1=xy\,b_2\wedge a_1\wedge b_1=xy\,\Phi_2(e_{43}),
\end{align*}
and $Ma_1\wedge Mb_1\wedge Ma_2\wedge Mb_2=\det(M)\,a_1\wedge b_1\wedge a_2\wedge b_2=xy\,\Phi_2(e_{44})$, since
$\det M=\det\rho(x)\det\rho(y)=xy$.
The results are collected in the second column of Table~\ref{tab.G}; in particular every coefficient has
degree $\le1$ in $x$ and in $y$. The third column is obtained from the second by applying $\BE_{t_0,t_1}$, i.e.\
by the substitutions $x\mapsto t_0$, $y\mapsto t_1$, $xy\mapsto1$ in the expanded coefficients; for example
$(1-x)(1-y)=1-x-y+xy\mapsto2-t_0-t_1$, $(1-x)y\mapsto t_1-1$, $-x(1-y)\mapsto1-t_0$.

\begin{table}[ht]
\renewcommand{\arraystretch}{1.3}
\begin{tabular}{@{}lll@{}}
\hline
$v$ & $\Ga(x,y)\,v$ & $\BE_{t_0,t_1}[\Ga(x,y)]\,v$\\
\hline
$e_{11}$ & $e_{11}$ & $e_{11}$\\
$e_{21}$ & $(1-x)\,e_{21}+e_{12}$ & $(1-t_0)\,e_{21}+e_{12}$\\
$e_{12}$ & $x\,e_{21}$ & $t_0\,e_{21}$\\
$e_{31}$ & $(1-y)\,e_{31}+y\,e_{13}$ & $(1-t_1)\,e_{31}+t_1\,e_{13}$\\
$e_{13}$ & $e_{31}$ & $e_{31}$\\
$e_{22}$ & $-x\,e_{22}$ & $-t_0\,e_{22}$\\
$e_{33}$ & $-y\,e_{33}$ & $-t_1\,e_{33}$\\
$e_{41}$ & $(1-x)(1-y)\,e_{41}+(1-x)y\,e_{23}$ & $(2-t_0-t_1)\,e_{41}+(t_1-1)(e_{23}+e_{32})$\\[-3pt]
         & $\qquad{}-(1-y)\,e_{32}+y\,e_{14}$ & $\qquad{}+t_1\,e_{14}$\\
$e_{23}$ & $(1-x)\,e_{41}-e_{32}$ & $(1-t_0)\,e_{41}-e_{32}$\\
$e_{32}$ & $-x(1-y)\,e_{41}-xy\,e_{23}$ & $(1-t_0)\,e_{41}-e_{23}$\\
$e_{14}$ & $x\,e_{41}$ & $t_0\,e_{41}$\\
$e_{42}$ & $-x(1-y)\,e_{42}+xy\,e_{24}$ & $(1-t_0)\,e_{42}+e_{24}$\\
$e_{24}$ & $x\,e_{42}$ & $t_0\,e_{42}$\\
$e_{43}$ & $-(1-x)y\,e_{43}+y\,e_{34}$ & $(1-t_1)\,e_{43}+t_1\,e_{34}$\\
$e_{34}$ & $xy\,e_{43}$ & $e_{43}$\\
$e_{44}$ & $xy\,e_{44}$ & $e_{44}$\\
\hline
\end{tabular}
\smallskip
\caption{The free two-flavor operator $\Ga(x,y)=\Phi_2^{-1}\ext(\rho(x)\oplus\rho(y)^{\T})\Phi_2$ and its image under
$\BE_{t_0,t_1}$.}\label{tab.G}
\end{table}

On the other hand, $K^{-1}e_{ij}=t_1^{-\b(e_j)}e_{ij}$, so by \eqref{eq.K}
\begin{equation}\label{eq.Rprule}
  \text{the coefficient of $e_{kl}$ in $\Rp\,e_{ij}$ is $\;-t_1^{\,\b(e_l)-\b(e_j)}\;\times$ the coefficient of
  $e_{kl}$ in $R\,e_{ij}$,}
\end{equation}
where $\b(e_l)=1$ for $l\in\{3,4\}$ and $0$ for $l\in\{1,2\}$. Setting $r=-1$ in \eqref{eq.R}--\eqref{eq.R2}
and applying \eqref{eq.Rprule} gives Table~\ref{tab.Rp}. Its last column coincides with the last column of
Table~\ref{tab.G}. This proves~(i).

\begin{table}[ht]
\renewcommand{\arraystretch}{1.3}
\begin{tabular}{@{}lll@{}}
\hline
$v$ & $R\,v$ \ ($r=-1$) & $\Rp\,v=-K R K^{-1}v$\\
\hline
$e_{11}$ & $-e_{11}$ & $e_{11}$\\
$e_{21}$ & $-e_{12}+(t_0-1)\,e_{21}$ & $(1-t_0)\,e_{21}+e_{12}$\\
$e_{12}$ & $-t_0\,e_{21}$ & $t_0\,e_{21}$\\
$e_{31}$ & $-e_{13}+(t_1-1)\,e_{31}$ & $(1-t_1)\,e_{31}+t_1\,e_{13}$\\
$e_{13}$ & $-t_1\,e_{31}$ & $e_{31}$\\
$e_{22}$ & $t_0\,e_{22}$ & $-t_0\,e_{22}$\\
$e_{33}$ & $t_1\,e_{33}$ & $-t_1\,e_{33}$\\
$e_{41}$ & $-e_{14}+(t_1^{-1}-1)\,e_{23}-(t_1-1)\,e_{32}$ & $(2-t_0-t_1)\,e_{41}+(t_1-1)(e_{23}+e_{32})$\\[-3pt]
         & $\qquad{}+(t_0+t_1-2)\,e_{41}$ & $\qquad{}+t_1\,e_{14}$\\
$e_{23}$ & $t_1\,e_{32}+(t_0-1)t_1\,e_{41}$ & $(1-t_0)\,e_{41}-e_{32}$\\
$e_{32}$ & $t_1^{-1}\,e_{23}-(1-t_0)\,e_{41}$ & $(1-t_0)\,e_{41}-e_{23}$\\
$e_{14}$ & $-t_0t_1\,e_{41}$ & $t_0\,e_{41}$\\
$e_{42}$ & $-t_1^{-1}\,e_{24}+(t_0-1)\,e_{42}$ & $(1-t_0)\,e_{42}+e_{24}$\\
$e_{24}$ & $-t_0t_1\,e_{42}$ & $t_0\,e_{42}$\\
$e_{43}$ & $-t_1\,e_{34}+(t_1-1)\,e_{43}$ & $(1-t_1)\,e_{43}+t_1\,e_{34}$\\
$e_{34}$ & $-e_{43}$ & $e_{43}$\\
$e_{44}$ & $-e_{44}$ & $e_{44}$\\
\hline
\end{tabular}
\smallskip
\caption{The $R$-matrix \eqref{eq.R}--\eqref{eq.R2} at $r=-1$, and $\Rp=-KRK^{-1}$ computed by the rule
\eqref{eq.Rprule}.}\label{tab.Rp}
\end{table}

\subsection{Proof of Theorem~\ref{thm.A} \textup{(ii)}}
The same computation with $M^-$ in place of $M$ (see \eqref{eq.MM}) gives
\begin{equation}\label{eq.Gmlist}
\begin{aligned}
\Gm e_{11}&=e_{11}, & \Gm e_{44}&=xy\,e_{44},\\
\Gm e_{22}&=-x\,e_{22}, & \Gm e_{33}&=-y\,e_{33},\\
\Gm e_{21}&=x\,e_{12}, & \Gm e_{12}&=e_{21}+(1-x)\,e_{12},\\
\Gm e_{31}&=e_{13}, & \Gm e_{13}&=y\,e_{31}+(1-y)\,e_{13},\\
\Gm e_{41}&=x\,e_{14}, & \Gm e_{23}&=-xy\,e_{32}+x(1-y)\,e_{14},\\
\Gm e_{42}&=x\,e_{24}, & \Gm e_{24}&=xy\,e_{42}-x(1-y)\,e_{24},\\
\Gm e_{43}&=xy\,e_{34}, & \Gm e_{34}&=y\,e_{43}-(1-x)y\,e_{34},\\
\Gm e_{32}&=-e_{23}-(1-x)\,e_{14}, & \Gm e_{14}&=y\,e_{41}+(1-y)\,e_{23}-(1-x)y\,e_{32}\\
 & & &\qquad{}+(1-x)(1-y)\,e_{14}.
\end{aligned}
\end{equation}
For instance
$M^-a_1\wedge M^-b_2=x\,a_2\wedge(y\,b_1+(1-y)b_2)=-xy\,\Phi_2(e_{32})+x(1-y)\,\Phi_2(e_{14})$,
$M^-b_1\wedge M^-a_2=b_2\wedge(a_1+(1-x)a_2)=-\Phi_2(e_{23})-(1-x)\Phi_2(e_{14})$, and
\begin{align*}
 M^-b_1\wedge M^-a_2\wedge M^-b_2&=b_2\wedge\bigl(a_1+(1-x)a_2\bigr)\wedge y\,b_1
   =y\,a_1\wedge b_1\wedge b_2+(1-x)y\,a_2\wedge b_1\wedge b_2\\
   &=y\,\Phi_2(e_{43})-(1-x)y\,\Phi_2(e_{34}).
\end{align*}
Again all coefficients have degree $\le1$ in $x$ and in $y$. Write $\bar t_0=t_0^{-1}$, $\bar t_1=t_1^{-1}$ and
$\Rm=\BE_{\bar t_0,\bar t_1}[\Gm(x,y)]$; we have to show that $\Rm\Rp=1$. Both $\Rm$ and $\Rp$ preserve the nine sectors \eqref{eq.sectors}; in the ordered bases
\eqref{eq.sectors} their blocks are, by \eqref{eq.Gmlist} and Table~\ref{tab.Rp}
(columns are images of basis vectors):
\[
\renewcommand{\arraystretch}{1.25}
\begin{array}{@{}l@{\qquad}l@{\qquad}l@{}}
\text{sector} & \Rp & \Rm\\[8pt]
(e_{21},e_{12}),\ (e_{42},e_{24}) & \begin{pmatrix}1-t_0&t_0\\1&0\end{pmatrix} & \begin{pmatrix}0&1\\\bar t_0&1-\bar t_0\end{pmatrix}\\[16pt]
(e_{31},e_{13}),\ (e_{43},e_{34}) & \begin{pmatrix}1-t_1&1\\t_1&0\end{pmatrix} & \begin{pmatrix}0&\bar t_1\\1&1-\bar t_1\end{pmatrix}\\[16pt]
(e_{22}),\ (e_{33}),\ (e_{11}),\ (e_{44}) & -t_0,\ \ -t_1,\ \ 1,\ \ 1 & -\bar t_0,\ \ -\bar t_1,\ \ 1,\ \ 1
\end{array}
\]
and on the sector with basis $(e_{41},e_{23},e_{32},e_{14})$
\begin{equation}\label{eq.PQ}
\begin{gathered}
 \Rp\big|_{(1,1)}=\mathsf P=\begin{pmatrix}2-t_0-t_1&1-t_0&1-t_0&t_0\\ t_1-1&0&-1&0\\ t_1-1&-1&0&0\\ t_1&0&0&0\end{pmatrix},\\[4pt]
 \Rm\big|_{(1,1)}=\mathsf P^-=\begin{pmatrix}0&0&0&\bar t_1\\ 0&0&-1&1-\bar t_1\\ 0&-1&0&1-\bar t_1\\ \bar t_0&\bar t_0-1&\bar t_0-1&2-\bar t_0-\bar t_1\end{pmatrix}.
\end{gathered}
\end{equation}
(For example, $x(1-y)\mapsto \bar t_0-1$, $-(1-x)y\mapsto1-\bar t_1$ and $-x(1-y)\mapsto1-\bar t_0$ under $\BE_{\bar t_0,\bar t_1}$.)
The $2\times2$ and $1\times1$ blocks of $\Rm$ are visibly inverse to those of $\Rp$, since $\bar t_0t_0=\bar t_1t_1=1$:
\begin{gather*}
 \begin{pmatrix}0&1\\\bar t_0&1-\bar t_0\end{pmatrix}\begin{pmatrix}1-t_0&t_0\\1&0\end{pmatrix}
   =\begin{pmatrix}1&0\\ \bar t_0(1-t_0)+1-\bar t_0&\bar t_0t_0\end{pmatrix}=1,\\[4pt]
 \begin{pmatrix}0&\bar t_1\\1&1-\bar t_1\end{pmatrix}\begin{pmatrix}1-t_1&1\\t_1&0\end{pmatrix}
   =\begin{pmatrix}\bar t_1t_1&0\\ 1-t_1+(1-\bar t_1)t_1&1\end{pmatrix}=1 .
\end{gather*}
It remains to check $\mathsf P^-\mathsf P=1$. Denote the rows of $\mathsf P$ by $\mathsf p_1,\dots,\mathsf p_4$. Then
\begin{align*}
 \text{row }1:&\quad \bar t_1\mathsf p_4=(\bar t_1t_1,0,0,0)=(1,0,0,0),\\
 \text{row }2:&\quad -\mathsf p_3+(1-\bar t_1)\mathsf p_4=\bigl(-(t_1-1)+(1-\bar t_1)t_1,\,1,\,0,\,0\bigr)=(0,1,0,0),\\
 \text{row }3:&\quad -\mathsf p_2+(1-\bar t_1)\mathsf p_4=\bigl(-(t_1-1)+(1-\bar t_1)t_1,\,0,\,1,\,0\bigr)=(0,0,1,0),\\
 \text{row }4:&\quad \bar t_0\mathsf p_1+(\bar t_0-1)(\mathsf p_2+\mathsf p_3)+(2-\bar t_0-\bar t_1)\mathsf p_4=(0,0,0,1);
\end{align*}
in the last line the four entries are
$\bar t_0(2-t_0-t_1)+2(\bar t_0-1)(t_1-1)+(2-\bar t_0-\bar t_1)t_1=0$, then twice $\bar t_0(1-t_0)-(\bar t_0-1)=0$, and $\bar t_0t_0=1$.
Hence $\Rm\Rp=1$, so $\Rm=\Rp^{-1}$. This proves~(ii) (and shows differently that $R$ is invertible). \qed

\subsection{Reformulations}
Write $\Ga(x,y)=\Ga_{00}+x\,\Ga_{10}+y\,\Ga_{01}+xy\,\Ga_{11}$ and similarly for $\Gm$. Let $D=\Ga_{11}$ and
$D^-=\Gm_{11}$. Reading off the coefficients of $xy$ in Table~\ref{tab.G} and in \eqref{eq.Gmlist}:
\begin{equation}\label{eq.D}
\begin{aligned}
 D\,e_{41}&=D\,e_{32}=e_{41}-e_{23}, & D\,e_{42}&=e_{42}+e_{24}, & D\,e_{43}&=D\,e_{34}=e_{43}, & D\,e_{44}&=e_{44},\\
 D^-e_{14}&=-D^-e_{23}=e_{14}+e_{32}, & D^-e_{24}&=e_{24}+e_{42}, & D^-e_{34}&=D^-e_{43}=e_{34}, & D^-e_{44}&=e_{44},
\end{aligned}
\end{equation}
and $D$, $D^-$ vanish on the remaining ten basis vectors.

To describe $D$ and $D^-$ in fermionic terms, let $\cdag_u$ be the operator $u\wedge(\,\cdot\,)$ on $\CF_2$ and let
$c_u$ be the contraction with $u\in\{a_1,b_1,a_2,b_2\}$, i.e.\ the unique linear map with
$c_u(u\wedge\omega)=\omega$ and $c_u(\omega)=0$ for all monomials $\omega$ in the basis vectors other than $u$
(``creation'' and ``annihilation'' operators), and $n_u=\cdag_uc_u$.

\begin{corollary}[the quartic term]\label{cor.interaction}
\begin{align*}
  \Rp&=\Phi_2^{-1}\ext\bigl(\rho(t_0)\oplus\rho(t_1)^{\T}\bigr)\Phi_2+(1-t_0t_1)\,D,\\
  \Rp^{-1}&=\Phi_2^{-1}\ext\bigl(\rho(t_0)^{-1}\oplus(\rho(t_1)^{\T})^{-1}\bigr)\Phi_2+\bigl(1-(t_0t_1)^{-1}\bigr)\,D^- ,
\end{align*}
where, with $X=\cdag_{a_1}(c_{a_1}-c_{a_2})=n_{a_1}-\cdag_{a_1}c_{a_2}$ and $Y=(\cdag_{b_1}-\cdag_{b_2})c_{b_1}=n_{b_1}-\cdag_{b_2}c_{b_1}$,
\[
   D=\Phi_2^{-1}\,XY\,\Phi_2,\qquad D^-=\Phi_2^{-1}\,X^-Y^-\,\Phi_2,\qquad
   X^-=n_{a_2}-\cdag_{a_2}c_{a_1},\quad Y^-=n_{b_2}-\cdag_{b_1}c_{b_2}.
\]
In particular $\Rp$ is of the form $\Phi_2^{-1}\ext(M_0)\Phi_2$ at $t_0t_1=1$.
\end{corollary}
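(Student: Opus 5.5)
The plan is to derive both identities from Theorem~\ref{thm.A} and then identify the quartic terms by direct computation on basis vectors. By \eqref{eq.E1}, for any $f$ of degree $\le1$ in $x$ and in $y$ we have $\BE_{u,v}[f]=f(u,v)+(1-uv)f_{11}$. We apply this entrywise to $\Ga(x,y)$, which is legitimate because Theorem~\ref{thm.A} guarantees the degree bound. Since $\Ga(t_0,t_1)=\Phi_2^{-1}\ext(\rho(t_0)\oplus\rho(t_1)^{\T})\Phi_2$ by \eqref{eq.G}, Theorem~\ref{thm.A}(i) gives the first identity with $D=\Ga_{11}$. In the same way, Theorem~\ref{thm.A}(ii) gives the second identity with $D^-=\Gm_{11}$. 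Here we evaluate $\Gm(t_0^{-1},t_1^{-1})=\Phi_2^{-1}\ext(\rho^-(t_0^{-1})\oplus\rho^-(t_1^{-1})^{\T})\Phi_2$ and rewrite it using $\rho^-(t^{-1})=\rho(t)^{-1}$ from \eqref{eq.rho}. For the transpose we also use $(\rho(t)^{\T})^{-1}=(\rho(t)^{-1})^{\T}$.

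It remains to check that $\Ga_{11}$ and $\Gm_{11}$ coincide with the fermionic expressions. The coefficients of $xy$ have already been tabulated in \eqref{eq.D}, so I only need to compute $XY$ and $X^-Y^-$ on the sixteen monomials $\Phi_2(e_{ij})$ and compare. To organize this, note that $Y=(\cdag_{b_1}-\cdag_{b_2})c_{b_1}$ kills every monomial not containing $b_1$. On a monomial containing $b_1$, it replaces $b_1$ by $b_1-b_2$ in place, with no sign arising, since the creation operator reinserts at the front exactly where the contraction removed. Similarly, $X=\cdag_{a_1}(c_{a_1}-c_{a_2})$ acts nontrivially only on monomials containing $a_1$ or $a_2$ but not both.

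The case check then runs as follows.
\begin{itemize}
\item On $a_1\wedge b_1$, $Y$ gives $a_1\wedge b_1-a_1\wedge b_2$, and $X$ then fixes both terms. This yields $e_{41}-e_{23}$.
\item On $b_1\wedge a_2=\Phi_2(e_{32})$, $Y$ gives $b_1\wedge a_2-b_2\wedge a_2$. Then $X$ sends $b_1\wedge a_2\mapsto a_1\wedge b_1$ and $b_2\wedge a_2\mapsto a_1\wedge b_2$, after tracking the signs $c_{a_2}(b\wedge a_2)=-b$. This again yields $e_{41}-e_{23}$, as \eqref{eq.D} requires.
\item The remaining cases $e_{42},e_{43},e_{34},e_{44}$ and the vanishing cases are handled the same way.
\item The check for $D^-$ is identical after the exchange $1\leftrightarrow2$ of strand indices.
\end{itemize}

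The last sentence of the statement is immediate: at $t_0t_1=1$ the quartic term drops out, leaving $\Rp=\Phi_2^{-1}\ext(\rho(t_0)\oplus\rho(t_1)^{\T})\Phi_2$. The main obstacle is only sign bookkeeping with the ordering $(a_1,b_1,a_2,b_2)$ and the sign conventions for contractions. I would fix the convention $c_u(\omega_1\wedge u\wedge\omega_2)=(-1)^{|\omega_1|}\omega_1\wedge\omega_2$ and verify each of the eight nonzero entries of \eqref{eq.D} against it.
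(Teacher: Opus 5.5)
Your route is the paper's. Theorem~\ref{thm.A} together with $\BE_{u,v}[f]=f(u,v)+(1-uv)f_{11}$ gives both identities with $D=\Ga_{11}$ and $D^-=\Gm_{11}$; your use of $\rho^-(t^{-1})=\rho(t)^{-1}$ is the paper's appeal to \eqref{eq.Ginv}. What remains is to compare $XY$ and $X^-Y^-$ with \eqref{eq.D} on the sixteen monomials. Your description of $Y$ and your two worked cases $e_{41},e_{32}$ are correct.

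However, your organizing claim about $X$ is false, and the cases you defer depend on it. On a monomial containing both $a_1$ and $a_2$, the operator $n_{a_1}=\cdag_{a_1}c_{a_1}$ acts as the identity, while $\cdag_{a_1}c_{a_2}$ produces a factor $a_1\wedge a_1=0$. So, for example, $X(a_1\wedge a_2)=a_1\wedge a_2\neq0$. The correct rule is that $X$:
\begin{itemize}
\item kills monomials with no $a$'s;
\item fixes every monomial containing $a_1$;
\item on a monomial containing $a_2$ but not $a_1$, replaces $a_2$ by $-a_1$ in place.
\end{itemize}

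Applied as you stated it, your rule would give $XY\Phi_2(e_{42})=XY\Phi_2(e_{44})=0$. This contradicts $De_{42}=e_{42}+e_{24}$ and $De_{44}=e_{44}$. With the correct rule you get $XY\Phi_2(e_{42})=a_1\wedge b_1\wedge a_2-a_1\wedge b_2\wedge a_2=\Phi_2(e_{42}+e_{24})$ and $XY\Phi_2(e_{44})=\Phi_2(e_{44})$, as required. The same correction is needed for $D^-e_{24}$ and $D^-e_{44}$: there $X^-$ fixes every monomial containing $a_2$.

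With this fixed, your argument is complete and agrees with the paper's proof. The paper streamlines the case check through $XY(\mu\wedge\nu)=X(\mu)\wedge Y(\nu)$ for monomials $\mu$ in the $a$'s and $\nu$ in the $b$'s.
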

\begin{proof}
The two displayed formulas are Theorem~\ref{thm.A} combined with the second expression in \eqref{eq.E1} and
with \eqref{eq.Ginv}, which gives $\Gm(t_0^{-1},t_1^{-1})=\Ga(t_0,t_1)^{-1}$. For the operator form, note that $X$
involves only $a_1,a_2$ and $Y$ only $b_1,b_2$, and that on monomials
\begin{align*}
  X\colon&\quad 1\mapsto0,\quad a_1\mapsto a_1,\quad a_2\mapsto-a_1,\quad a_1\wedge a_2\mapsto a_1\wedge a_2;\\
  Y\colon&\quad 1\mapsto0,\quad b_1\mapsto b_1-b_2,\quad b_2\mapsto0,\quad b_1\wedge b_2\mapsto b_1\wedge b_2 .
\end{align*}
Moreover $XY(\mu\wedge\nu)=X(\mu)\wedge Y(\nu)$ for monomials $\mu$ in $a_1,a_2$ and $\nu$ in $b_1,b_2$ (moving
$c_{b_1}$ and then $\cdag_{b_1}$ or $\cdag_{b_2}$ across $\mu$ produces the sign $(-1)^{|\mu|}$ twice). Writing the six relevant
basis vectors as $\pm\mu\wedge\nu$,
\[
\begin{gathered}
\Phi_2(e_{41})=a_1\wedge b_1,\quad \Phi_2(e_{32})=-a_2\wedge b_1,\quad \Phi_2(e_{42})=-a_1\wedge a_2\wedge b_1,\\
\Phi_2(e_{43})=a_1\wedge b_1\wedge b_2,\quad \Phi_2(e_{34})=-a_2\wedge b_1\wedge b_2,\quad \Phi_2(e_{44})=-a_1\wedge a_2\wedge b_1\wedge b_2,
\end{gathered}
\]
we get $XY\Phi_2(e_{41})=a_1\wedge(b_1-b_2)=\Phi_2(e_{41}-e_{23})$,
$XY\Phi_2(e_{32})=a_1\wedge(b_1-b_2)$,
$XY\Phi_2(e_{42})=-a_1\wedge a_2\wedge(b_1-b_2)=\Phi_2(e_{42}+e_{24})$,
$XY\Phi_2(e_{43})=\Phi_2(e_{43})$, $XY\Phi_2(e_{34})=a_1\wedge b_1\wedge b_2=\Phi_2(e_{43})$ and
$XY\Phi_2(e_{44})=\Phi_2(e_{44})$; on the other ten basis vectors $X$ or $Y$ vanishes. This is \eqref{eq.D}.
The computation for $D^-$ is the same with the two strands exchanged:
$X^-\colon a_2\mapsto a_2$, $a_1\mapsto-a_2$, $a_1\wedge a_2\mapsto a_1\wedge a_2$ and
$Y^-\colon b_2\mapsto b_2-b_1$, $b_1\mapsto0$, $b_1\wedge b_2\mapsto b_1\wedge b_2$.
\end{proof}

\begin{remark}[the free-fermion condition]\label{rem.ffc}
For one flavor, the Kauffman--Saleur free-fermion condition \cite[(5.3)]{KauffmanSaleur} says precisely that the vertex
operator, normalised so that the empty vertex has weight $1$, is $\ext$ of its restriction to one-particle states
\cite[(5.2)--(5.5)]{KauffmanSaleur}. By Table~\ref{tab.Rp} the restriction of $\Rp$ to
one-particle states is $\rho(t_0)\oplus\rho(t_1)^{\T}$, so Corollary~\ref{cor.interaction} measures the failure of the
free-fermion condition: it is the operator $(1-t_0t_1)D$ of rank $4$. For instance the doubly occupied
configuration $e_{44}\mapsto e_{44}$ has weight $1$ instead of $\det\rho(t_0)\det\rho(t_1)=t_0t_1$.
\end{remark}

\begin{corollary}[in the basis of {\cite[A.1]{GHKST}}]\label{cor.paperbasis}
For $r=-1$,
\[
   -R=\Phi_2^{-1}\ext\bigl(\rho(t_0)\oplus\rho(t_1)\bigr)\Phi_2+(1-t_0t_1)\,C,\qquad C=K^{-1}DK,
\]
where $C\,e_{41}=C\,e_{32}=e_{41}-t_1^{-1}e_{23}$, $C\,e_{42}=e_{42}+t_1^{-1}e_{24}$, $C\,e_{43}=C\,e_{34}=e_{43}$,
$C\,e_{44}=e_{44}$, and $C$ vanishes on the other basis vectors. In fermionic terms
$C=\Phi_2^{-1}\,(n_{a_1}-\cdag_{a_1}c_{a_2})(n_{b_1}-t_1^{-1}\cdag_{b_2}c_{b_1})\,\Phi_2$.
\end{corollary}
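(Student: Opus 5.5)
The plan is to derive the statement directly from Corollary~\ref{cor.interaction} by undoing the gauge $K$ of \eqref{eq.K}. By definition $\Rp=-KRK^{-1}$, so $-R=K^{-1}\Rp K$. Substituting the first formula of Corollary~\ref{cor.interaction} gives
\[
 -R=K^{-1}\Phi_2^{-1}\ext\bigl(\rho(t_0)\oplus\rho(t_1)^{\T}\bigr)\Phi_2K+(1-t_0t_1)\,K^{-1}DK .
\]
The second term is $(1-t_0t_1)C$ by definition of $C$. What remains is to identify the first term and to make $C$ explicit.

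\textbf{Key step: $K$ is itself an exterior power.} Let $\kappa\in\End(U_2)$ be the diagonal map fixing $a_1,a_2,b_1$ and sending $b_2\mapsto t_1b_2$, that is, $\kappa=1\oplus\diag(1,t_1)$. Since $\b(e_j)=1$ exactly when $\Phi_2(e_{ij})$ contains $b_2$, we get $\Phi_2K\Phi_2^{-1}=\ext(\kappa)$. By Lemma~\ref{lem.minors}~(i), the first term therefore equals $\Phi_2^{-1}\ext\bigl(\kappa^{-1}(\rho(t_0)\oplus\rho(t_1)^{\T})\kappa\bigr)\Phi_2$.

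The $a$-block is unchanged. For the $b$-block, conjugating $\rho(t_1)^{\T}=\begin{pmatrix}1-t_1&1\\t_1&0\end{pmatrix}$ by $\diag(1,t_1)$ multiplies the $(p,q)$ entry by $t_1^{q-p}$. The result is
\[
\begin{pmatrix}1-t_1&t_1\\1&0\end{pmatrix}=\rho(t_1),
\]
which gives the first term of the claim.

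\textbf{Explicit form of $C$.} For the list of values of $C$, I would apply $K^{-1}(\cdot)K$ to \eqref{eq.D}. The coefficient of $e_{kl}$ in $Ce_{ij}$ is $t_1^{\b(e_j)-\b(e_l)}$ times that in $De_{ij}$. Only the entries moving between $e_{41},e_{32},e_{42}$ (where $\b$ of the second factor is $0$) and $e_{23},e_{24}$ (where it is $1$) change, and they pick up the factor $t_1^{-1}$. The entries $e_{43}\mapsto e_{43}$, $e_{34}\mapsto e_{43}$ and $e_{44}\mapsto e_{44}$ are unchanged, since there $\b(e_j)=\b(e_l)$.

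\textbf{Fermionic form of $C$.} Conjugation by $\ext(\kappa)$ acts on creation and annihilation operators by $\ext(\kappa)^{-1}\cdag_{u}\ext(\kappa)=\lambda_u\cdag_u$ and $\ext(\kappa)^{-1}c_u\ext(\kappa)=\lambda_u^{-1}c_u$ up to the appropriate convention, where $\lambda_u$ is the eigenvalue of $\kappa$ on $u$. Checking the direction on $\cdag_{b_2}$ acting on $1$ gives the scaling $t_1^{-1}\cdag_{b_2}$. The operators $n_u$, $X$ and $\cdag_{b_2}$-free terms are invariant, so $Y\mapsto n_{b_1}-t_1^{-1}\cdag_{b_2}c_{b_1}$, which is the claimed formula.

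The only genuine obstacle is getting the direction of the $t_1^{\pm1}$ factors right, both in the matrix conjugation and in the operator conjugation. I would settle it by checking one entry directly, for instance $Ce_{41}$ through $K^{-1}De_{41}$, and seeing that it yields $-t_1^{-1}e_{23}$.
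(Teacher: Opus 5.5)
Your proposal is correct and follows the paper's proof essentially step for step. Like the paper, you write $K=\Phi_2^{-1}\ext(1\oplus\diag(1,t_1))\Phi_2$, conjugate $\rho(t_1)^{\T}$ into $\rho(t_1)$ and apply Lemma~\ref{lem.minors}(i), then read off $C$ from \eqref{eq.D} via the factor $t_1^{\b(e_j)-\b(e_l)}$, and obtain the fermionic form from $K^{-1}\cdag_{b_2}K=t_1^{-1}\cdag_{b_2}$. One small slip: the general rule you state first has the exponents inverted, since in fact $\ext(\kappa)^{-1}\cdag_u\ext(\kappa)=\lambda_u^{-1}\cdag_u$ and $\ext(\kappa)^{-1}c_u\ext(\kappa)=\lambda_u c_u$; your direct check on $\cdag_{b_2}(1)$ nonetheless gives the correct factor $t_1^{-1}$.
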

\begin{proof}
$K=\Phi_2^{-1}\ext(1\oplus\diag(1,t_1))\Phi_2$, because $K$ multiplies $e_{ij}$ by $t_1$ exactly when $b_2$ occurs in
$\Phi_2(e_{ij})$. Since $\diag(1,t_1)^{-1}\rho(t_1)^{\T}\diag(1,t_1)=\rho(t_1)$, Lemma~\ref{lem.minors}(i) gives
$K^{-1}\Ga(t_0,t_1)K=\Phi_2^{-1}\ext(\rho(t_0)\oplus\rho(t_1))\Phi_2$. Conjugating the first formula of
Corollary~\ref{cor.interaction} by $K^{-1}$ gives the claim; the formulas for $C$ follow from \eqref{eq.D} and
$K^{-1}DK\,e_{ij}=t_1^{\b(e_j)}K^{-1}(D\,e_{ij})$, and the fermionic form from $K^{-1}\cdag_{b_2}K=t_1^{-1}\cdag_{b_2}$
(conjugating by $K$ in $\CF_2$).
\end{proof}

\begin{remark}\label{rem.Lambda1}
The operator $\Phi_2^{-1}\ext(\rho(t_0)\oplus\rho(t_1))\Phi_2$ is the free two-flavor $R$-matrix; by
Lemma~\ref{lem.str} its braid closures give products of two Burau determinants, i.e.\
$\Delta(t_0)\Delta(t_1)$. One checks directly that it coincides with the matrix $R_{\Lambda_1}$ of
\cite[Appendix~A.2]{GHKST} at $r=-1$.
So Corollary~\ref{cor.paperbasis} reads: \emph{at $r=-1$ the matrix of \cite[A.1]{GHKST} is minus the matrix of
\cite[A.2]{GHKST} minus $(1-t_0t_1)C$}, where $C$ has only nine non-zero entries.
\end{remark}

\section{A determinant formula}\label{sec.theorem}

Throughout this section and the next one, $w=s_1\cdots s_c$, $s_j=\sigma_{\nu_j}^{\ve_j}$, is a braid word on $n$
strands, $L=\widehat w$ is its closure, and $\mu$ is the number of components of $L$. Recall that $V(w)$ denotes the
state sum of Definition~\ref{def.V} at $r=-1$, and that it is the $V_1$-polynomial of $L$
(Theorem~\ref{thm.GHKST} and Remark~\ref{rem.convention}). Moreover, the permutation of $\{1,\dots,n\}$ induced by
$w$ is a product of $c$ transpositions and has $\mu$ cycles, so its sign is $(-1)^c=(-1)^{n-\mu}$. Thus
\begin{equation}\label{eq.VV1}
   V_1(L;t_0,t_1)=V(w)\qquad\text{and}\qquad(-1)^{n-1+c}=(-1)^{\mu-1}.
\end{equation}
All the results of Sections~\ref{sec.theorem} and~\ref{sec.lagrangian} are stated for $V_1(L)=V_1(L;t_0,t_1)$. Their
proofs compute the state sum $V(w)$ directly from Definition~\ref{def.V}, and the passage to $V_1(L)$ is always
\eqref{eq.VV1}.

\subsection{Burau matrices with a variable at each crossing}
Attach to the $j$-th crossing of $w$ two indeterminates $x_j$ and $y_j$. With $\rho^{+}=\rho$ and $\rho^-$ as in \eqref{eq.rho}, define the
$n\times n$ matrices
\begin{equation}\label{eq.Bj}
  B_j(x_j)=1_{\nu_j-1}\oplus\rho^{\ve_j}(x_j)\oplus1_{n-\nu_j-1},\qquad \tB_j(y_j)=B_j(y_j)^{\T}
\end{equation}
(block diagonal matrices, the $2\times2$ block in rows and columns $\nu_j,\nu_j+1$), and
\begin{equation}\label{eq.B}
  B(x)=B_c(x_c)\cdots B_2(x_2)B_1(x_1),\qquad \tB(y)=\tB_c(y_c)\cdots\tB_2(y_2)\tB_1(y_1),
\end{equation}
\begin{equation}\label{eq.P}
  P_w(x)=\det\bigl(1-B(x)'\bigr),\qquad \tP_w(y)=\det\bigl(1-\tB(y)'\bigr),
\end{equation}
where $'$ deletes the first row and column. If all $x_j$ are specialised to $t^{\ve_j}$, then
$B_j$ is the usual Burau matrix of the letter $s_j$, $B$ is the (unreduced) Burau matrix of
$w$, and $P_w$ is the classical minor $\det(1-B')$ computing the Alexander polynomial; see
Remark~\ref{rem.burau}. Note that the product $\tB(y)$ is taken in the same order as $B(x)$, so that
$\tB(y)$ is \emph{not} the transpose of $B(y)$.

\begin{lemma}\label{lem.multiaffine}
$P_w(x)$ has degree $\le1$ in each variable $x_j$, and $\tP_w(y)$ has degree $\le1$ in each $y_j$.
\end{lemma}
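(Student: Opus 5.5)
The plan is to apply Lemma~\ref{lem.rankone} to the matrix $1-B(x)'$, viewing it as a function of a single variable $x_j$ with all other variables held fixed. Fix $j$. The factor $B_j(x_j)$ is the identity except for the $2\times2$ block $\rho^{\ve_j}(x_j)$ in rows and columns $\nu_j,\nu_j+1$. By \eqref{eq.rho}, this block depends affinely on $x_j$, and its $x_j$-part has rank one:
\[
\rho(x)=\begin{pmatrix}1&0\\1&0\end{pmatrix}+x\begin{pmatrix}1\\0\end{pmatrix}\begin{pmatrix}-1&1\end{pmatrix},\qquad
\rho^-(x)=\begin{pmatrix}0&1\\0&1\end{pmatrix}+x\begin{pmatrix}0\\1\end{pmatrix}\begin{pmatrix}1&-1\end{pmatrix}.
\]
Hence $B_j(x_j)=N_j+x_j\,p\,\ell^{\T}$, where $N_j$ is constant and $p$, $\ell$ are vectors supported on the coordinates $\nu_j,\nu_j+1$.

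Next I write $B(x)=L\,B_j(x_j)\,R$, with $L=B_c\cdots B_{j+1}$ and $R=B_{j-1}\cdots B_1$, neither of which involves $x_j$. Then
\[
B(x)=LN_jR+x_j\,(Lp)(R^{\T}\ell)^{\T}
\]
is again constant plus $x_j$ times a rank-one matrix. Deleting the first row and column keeps this form, because the perturbation restricts to $x_j\,(Lp)_{\ge2}\,(R^{\T}\ell)_{\ge2}^{\T}$. Therefore $1-B(x)'=M+x_j\,p'\ell'^{\T}$ with $M$ and the vectors $p'$, $\ell'$ independent of $x_j$. The final clause of Lemma~\ref{lem.rankone}, applied over the field $\BK(x_i:i\ne j)$, then gives $\deg_{x_j}P_w\le1$.

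For $\tB(y)$ the argument is identical. The factor $\tB_j(y_j)=B_j(y_j)^{\T}=N_j^{\T}+y_j\,\ell p^{\T}$ is also constant plus a rank-one term. The order of the product does not matter, since we only conjugate the rank-one term by fixed left and right factors. So $\deg_{y_j}\tP_w\le1$.

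I expect no real obstacle here. The one point to check carefully is that the rank-one structure survives both the multiplication by the other factors and the deletion of the first row and column; both are immediate from the displayed factorization.
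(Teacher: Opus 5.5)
Your proof is correct, but it carries the rank-one structure through the product differently from the paper.

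The paper works in the exterior algebra. By Lemma~\ref{lem.rankone}, every entry of $\ext(B_j(x_j))$ is affine in $x_j$. Since $\ext(B(x))=\ext(B_c(x_c))\cdots\ext(B_1(x_1))$ and each $x_j$ occurs in only one factor, every entry of $\ext(B(x))$, i.e.\ every minor of $B(x)$, has degree $\le1$ in each $x_j$. Lemma~\ref{lem.str} then writes $P_w(x)=\sum_{S\subseteq\{2,\dots,n\}}(-1)^{|S|}\det B(x)[S|S]$ as a signed sum of such minors.

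You instead split $B(x)=L\,B_j(x_j)\,R$ around the single factor containing $x_j$. You observe that the form ``constant plus $x_j$ times a rank-one matrix'' survives multiplication by $x_j$-free matrices and passage to a submatrix. You then need only the determinant clause of Lemma~\ref{lem.rankone}, with no exterior powers and no Lemma~\ref{lem.str}.

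Your route is shorter and loses nothing: the same observation applied to any submatrix $B(x)[T|S]$ shows that every minor of $B(x)$ is multiaffine. The paper's route has the advantage of staying inside the $\ext$ bookkeeping that drives the proof of Theorem~\ref{thm.B}. There, the fact that the entries of each $\ext(\BB_j)$ involve only $x_j,y_j$, to degree $\le1$, is what lets $\BE$ factor over the crossings.

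The paper in fact uses your style of argument later, for $P_{D,\hate}$ in Section~\ref{ssec.diagdef}. There \eqref{eq.rank1} exhibits the $x_j$-dependence of $T^a_D(x)$ directly as a rank-one term.
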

\begin{proof}
$\rho(z)=\begin{psmallmatrix}1&0\\1&0\end{psmallmatrix}+z\begin{psmallmatrix}-1&1\\0&0\end{psmallmatrix}$ and
$\rho^-(z)=\begin{psmallmatrix}0&1\\0&1\end{psmallmatrix}+z\begin{psmallmatrix}0&0\\1&-1\end{psmallmatrix}$, and the
matrices multiplying $z$ have rank one. Hence, by Lemma~\ref{lem.rankone}, the entries of $\ext(B_j(x_j))$ have
degree $\le1$ in $x_j$, and the entries of $\ext(B(x))=\ext(B_c(x_c))\cdots\ext(B_1(x_1))$ have degree $\le1$ in each
$x_j$. By Lemma~\ref{lem.str}, $P_w(x)=\sum_{S\subseteq\{2,\dots,n\}}(-1)^{|S|}\det B(x)[S|S]$ is a signed sum of
such entries. The same argument applies to $\tP_w$.
\end{proof}

Let $\CA$ be the $\BK$-vector space of polynomials in $x_1,\dots,x_c,y_1,\dots,y_c$ of degree $\le1$ in each
variable; it has the basis $x^Sy^{S'}=\prod_{j\in S}x_j\prod_{j\in S'}y_j$, $S,S'\subseteq\{1,\dots,c\}$.
Define the $\BK$-linear map
\begin{equation}\label{eq.E}
  \BE=\BE_w\colon\CA\to\BK,\qquad
  \BE\bigl[x^Sy^{S'}\bigr]=\prod_{j\in S\smallsetminus S'}t_0^{\ve_j}\ \prod_{j\in S'\smallsetminus S}t_1^{\ve_j}.
\end{equation}
In words: $x_j\mapsto t_0^{\ve_j}$ and $y_j\mapsto t_1^{\ve_j}$, except that the product $x_jy_j$ of the two
variables of the \emph{same} crossing is replaced by $1$. By construction, if $f_j$ is a polynomial of degree $\le1$
in each of $x_j,y_j$ (and involves no other variable), then
\begin{equation}\label{eq.Emult}
   \BE\Bigl[\,\prod_{j=1}^cf_j\Bigr]=\prod_{j=1}^c\BE_{t_0^{\ve_j},\,t_1^{\ve_j}}[f_j],
\end{equation}
with the one-crossing functional \eqref{eq.E1}: both sides are multilinear in $(f_1,\dots,f_c)$ and agree on
monomials.

\subsection{Main theorem}

\begin{theorem}\label{thm.B}
Let $w$ be a braid word on $n$ strands with $c$ crossings, and let $L$ be its closure, a link with $\mu$ components.
Then
\begin{equation}\label{eq.main}
   V_1(L;t_0,t_1)=(-1)^{\mu-1}\;\BE\bigl[P_w(x)\,\tP_w(y)\bigr]
     =(-1)^{\mu-1}\;\BE\Bigl[\det\bigl(1-B(x)'\bigr)\,\det\bigl(1-\tB(y)'\bigr)\Bigr].
\end{equation}
Equivalently, by \eqref{eq.VV1}, the state sum of Definition~\ref{def.V} is
$V(w)=(-1)^{n-1+c}\,\BE\bigl[P_w(x)\,\tP_w(y)\bigr]$.
\end{theorem}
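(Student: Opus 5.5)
The idea is to transport the state sum $V(w)$ of Definition~\ref{def.V} into the Fock space $\CF_n$ and apply the compression lemma crossing by crossing, with the variables $x_j,y_j$ kept formal until the very end.

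\emph{Step 1: a formal operator in Fock space.} Using Lemma~\ref{lem.fock}(ii), each letter corresponds to
\[
\Phi_n^{-1}\ext\bigl(M_j^{(\nu_j)}\bigr)\Phi_n=\id^{\otimes(\nu_j-1)}\otimes G^{\ve_j}(x_j,y_j)\otimes\id^{\otimes(n-\nu_j-1)},
\]
where $M_j$ is $M$ or $M^-$. By construction the endomorphism $M_j^{(\nu_j)}$ of $U_n$ equals $B_j(x_j)\oplus\tB_j(y_j)$. Lemma~\ref{lem.minors}(i) then gives
\[
\prod_j \Phi_n^{-1}\ext(M_j^{(\nu_j)})\Phi_n=\Phi_n^{-1}\ext\bigl(B(x)\oplus\tB(y)\bigr)\Phi_n,
\]
with the product taken in the order $c,\dots,1$. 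Every matrix entry of this operator is a sum of products of one entry of $G^{\ve_j}(x_j,y_j)$ for each $j$. By Theorem~\ref{thm.A}, each such factor has degree $\le1$ in $x_j$ and in $y_j$. Hence \eqref{eq.Emult} applies entrywise, and Theorem~\ref{thm.A} together with the definition $\Rp=-KRK^{-1}$ yields
\[
\BE\bigl[\Phi_n^{-1}\ext(B(x)\oplus\tB(y))\Phi_n\bigr]=\prod_j\bigl(\id\otimes\Rp^{\ve_j}\otimes\id\bigr)=(-1)^c\,\mathcal K\,\varrho(w)\,\mathcal K^{-1}.
\]
Here $\mathcal K=\prod_k t_1^{(k-1)\b(\cdot_k)}$, or some similar diagonal operator, is obtained by telescoping the local gauges $K$. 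The point is that $K$ acts as $1\otimes t_1^{\b}$ on a pair of adjacent factors, so it should be the restriction of a global diagonal operator of the form $e_I\mapsto t_1^{\sum_k \lambda_k\b(e_{m_k})}e_I$. The conjugation by $\id\otimes K\otimes\id$ would then be compatible, since $R$ preserves $\b$ of the pair. I expect this to need a slightly cleverer choice, for instance $\mathcal K e_I=t_1^{\sum_k(k-1)\b(e_{m_k})}e_I$. For that choice the ratio on factors $k,k+1$ is $t_1^{\b(e_{m_{k+1}})}$ up to a power depending only on the total $\b$, which is preserved. In any case it is diagonal, so diagonal entries are unchanged.

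\emph{Step 2: the supertrace.} By \eqref{eq.hparity},
\[
h_{m_2}\cdots h_{m_n}=(-1)^{n-1}(-1)^{|S|}
\]
when $\Phi_n(e_{(1,m_2,\dots)})=u_S$ with $S\subseteq Q$, by Lemma~\ref{lem.fock}(i). So $V(w)$ equals $(-1)^{n-1+c}$ times $\BE$ applied to
\[
\sum_{S\subseteq Q}(-1)^{|S|}\langle u_S,\ext(B(x)\oplus\tB(y))u_S\rangle,
\]
where $\BE$ commutes with these linear operations. Lemma~\ref{lem.compress} and \eqref{eq.detsum} turn this sum into $\det(1-B(x)')\det(1-\tB(y)')$. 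Finally \eqref{eq.VV1} converts the sign $(-1)^{n-1+c}$ into $(-1)^{\mu-1}$.

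\emph{Main obstacle.} The delicate point is legitimizing ``apply $\BE$ then multiply''. The product $P_w\tP_w$ lies in $\CA$ by Lemma~\ref{lem.multiaffine}, but $\BE$ is not multiplicative. It is only multiplicative across distinct crossings, via \eqref{eq.Emult}. This is why the argument must expand at the level of operator entries, which are sums of products of single-crossing factors, rather than at the level of determinants. The second point to check carefully is the global gauge $\mathcal K$ and the sign bookkeeping from $\Rp=-KRK^{-1}$ and $\Rm$.
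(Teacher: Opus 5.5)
Your proposal is correct and follows the paper's proof essentially step for step. The ingredients match: the per-crossing identity from Theorem~\ref{thm.A} and the locality lemma, the passage to the whole braid via \eqref{eq.Emult} applied entrywise together with Lemma~\ref{lem.minors}(i), and the closure via \eqref{eq.hparity}, Lemma~\ref{lem.compress} and \eqref{eq.detsum}. The gauge you settle on, $e_I\mapsto t_1^{\sum_k(k-1)\b(e_{m_k})}e_I$, is exactly the paper's $K_n$, and your reason for it is the paper's justification: on the factors $i,i+1$ the exponent is $(i-1)\bigl(\b(e_{m_i})+\b(e_{m_{i+1}})\bigr)+\b(e_{m_{i+1}})$, and $R^{\pm1}$ preserves the total $\b$-degree. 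So no further cleverness is needed there.
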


\begin{proof}
By \eqref{eq.VV1} it suffices to prove the formula for the state sum $V(w)$. We do this in three steps.

\smallskip\emph{Step 1: one crossing.} Let $K_n\in\End(W^{\otimes n})$ be the diagonal operator
\[
   K_ne_I=t_1^{\sum_k(k-1)\b(e_{m_k})}\,e_I\qquad\text{for }I=(m_1,\dots,m_n),
\]
and let $\BB_j=B_j(x_j)\oplus\tB_j(y_j)\in\End(U_n)$ in the notation of Section~\ref{ssec.fock}. We claim that
\begin{equation}\label{eq.step1}
   -K_n\,\varrho(s_j)\,K_n^{-1}=\Phi_n^{-1}\ \BE_{t_0^{\ve_j},\,t_1^{\ve_j}}\bigl[\ext(\BB_j)\bigr]\ \Phi_n .
\end{equation}
Let $i=\nu_j$. The exponent of $t_1$ in $K_ne_I$ is
$(i-1)\bigl(\b(e_{m_i})+\b(e_{m_{i+1}})\bigr)+\b(e_{m_{i+1}})$ plus terms not involving the $i$-th and
$(i+1)$-st tensor factors. Since $R^{\pm1}$ preserves the total $\b$-degree of these two factors
(Lemma~\ref{lem.blocks}), we get $K_n\varrho(\sigma_i^{\pm1})K_n^{-1}=\id\otimes KR^{\pm1}K^{-1}\otimes\id$ with $K$ as in
\eqref{eq.K}, that is, $-K_n\varrho(\sigma_i^{\pm1})K_n^{-1}=\id\otimes\Rp^{\pm1}\otimes\id$. By
Theorem~\ref{thm.A}, $\Rp=\BE_{t_0,t_1}[\Ga(x_j,y_j)]$ and $\Rp^{-1}=\BE_{t_0^{-1},t_1^{-1}}[\Gm(x_j,y_j)]$. Finally,
by the locality Lemma~\ref{lem.fock} (ii) applied to $M_0=\rho^{\pm}(x_j)\oplus\rho^{\pm}(y_j)^{\T}$,
\[
  \id^{\otimes(i-1)}\otimes\Ga(x_j,y_j)\otimes\id^{\otimes(n-i-1)}=\Phi_n^{-1}\ext\bigl(M_0^{(i)}\bigr)\Phi_n ,\qquad M_0^{(i)}=B_j(x_j)\oplus\tB_j(y_j)=\BB_j
\]
for $\ve_j=+1$, and the same with $\Gm$ for $\ve_j=-1$. This proves \eqref{eq.step1}.

\smallskip\emph{Step 2: the whole braid.} Multiplying the identities \eqref{eq.step1} for $j=c,\dots,1$,
\begin{align*}
  (-1)^cK_n\,\varrho(w)\,K_n^{-1}
  &=\Phi_n^{-1}\ \BE_{t_0^{\ve_c},t_1^{\ve_c}}\bigl[\ext(\BB_c)\bigr]\cdots\BE_{t_0^{\ve_1},t_1^{\ve_1}}\bigl[\ext(\BB_1)\bigr]\ \Phi_n\\
  &=\Phi_n^{-1}\ \BE\bigl[\ext(\BB_c)\cdots\ext(\BB_1)\bigr]\ \Phi_n .
\end{align*}
The second equality is \eqref{eq.Emult}, applied to each matrix entry of the product: such an entry is a sum of
products of one entry from each factor, and the entries of $\ext(\BB_j)$ have degree $\le1$ in $x_j$ and in $y_j$
and involve no other variable. By Lemma~\ref{lem.minors} (i),
$\ext(\BB_c)\cdots\ext(\BB_1)=\ext(\BB)$ with $\BB=\BB_c\cdots\BB_1=B(x)\oplus\tB(y)$. So
\begin{equation}\label{eq.step2}
   (-1)^cK_n\,\varrho(w)\,K_n^{-1}=\Phi_n^{-1}\ \BE\bigl[\ext\bigl(B(x)\oplus\tB(y)\bigr)\bigr]\ \Phi_n .
\end{equation}

\smallskip\emph{Step 3: the closure.} Conjugation by the diagonal operator $K_n$ does not change diagonal matrix
entries. By Lemma~\ref{lem.fock} (i), the vectors $e_{(1,m_2,\dots,m_n)}$ correspond under $\Phi_n$ to the $u_S$ with
$S\subseteq Q=\{3,\dots,2n\}$, and by \eqref{eq.hparity} $h_{m_2}\cdots h_{m_n}=(-1)^{n-1}(-1)^{|S|}$. Therefore, by
Definition~\ref{def.V} and \eqref{eq.step2},
\[
   V(w)=(-1)^{n-1+c}\sum_{S\subseteq Q}(-1)^{|S|}\,\BE\bigl[\langle u_S,\ext(\BB)u_S\rangle\bigr]
       =(-1)^{n-1+c}\,\BE\bigl[\det\bigl(1-\BB[Q|Q]\bigr)\bigr]
\]
by Lemma~\ref{lem.compress} and the linearity of $\BE$, and
$\det(1-\BB[Q|Q])=\det(1-B(x)')\det(1-\tB(y)')=P_w(x)\tP_w(y)$ by \eqref{eq.detsum}. This is the asserted formula
for $V(w)$.
\end{proof}

\begin{remark}[reversing the word]\label{rem.reverse}
Let $w^{\rm rev}=s_c\cdots s_2s_1$ be the reversed word, with its crossings numbered so that the crossing $j$ is still
the letter $s_j$. By \eqref{eq.B},
\begin{align*}
   B_{w^{\rm rev}}(x)&=B_1(x_1)B_2(x_2)\cdots B_c(x_c)=\tB_w(x)^{\T},\\
   \tB_{w^{\rm rev}}(y)&=\tB_1(y_1)\tB_2(y_2)\cdots\tB_c(y_c)=B_w(y)^{\T},
\end{align*}
and deleting the first row and column commutes with transposition, so $P_{w^{\rm rev}}=\tP_w$ and
$\tP_{w^{\rm rev}}=P_w$. The closure of $w^{\rm rev}$ is the closure $L$ of $w$ with the orientation of all components
reversed; we denote it by $-L$.
Theorem~\ref{thm.B} for $w^{\rm rev}$, with the names of the variables $x$ and $y$ exchanged, therefore gives
\[
   V_1(-L;t_0,t_1)=V_1(L;t_1,t_0).
\]
Thus the symmetry
$V_1(L;t_0,t_1)=V_1(L;t_1,t_0)$ is equivalent to the invariance of $V_1$ under reversal of the orientation. This is consistent since the Links--Gould invariant does not distinguish a link from its inverse \cite{DeWitKauffmanLinks},
and $V_1$ is the Links--Gould invariant \cite{GHKKST}. In particular the direction in which braid words are read
(Section~\ref{sec.setup}) is immaterial for the value of $V_1$.
\end{remark}

\subsection{Examples}

\begin{example}[unknots]\label{ex.unknot}
For $w=\sigma_1$ ($n=2$): $B=\rho(x_1)$, $B'=(0)$, so $P_w=\tP_w=1$ and $V_1=1$.
For $w=\sigma_1^{-1}$: $B=\rho^-(x_1)$, $B'=(1-x_1)$, so $P_w=x_1$, $\tP_w=y_1$, and
$V_1=\BE[x_1y_1]=1$. Here the two Burau determinants alone would give $t_0^{-1}t_1^{-1}$; the rule
$x_1y_1\mapsto1$ is what makes the formula invariant under the Markov III move.
\end{example}

\begin{example}[Hopf link]
$w=\sigma_1^2$, $n=2$, $\mu=2$. $B=\rho(x_2)\rho(x_1)$ has lower right entry $x_1$, so $P_w=1-x_1$; similarly
$\tP_w=1-y_2$. No crossing contributes both variables, and $V_1=-(1-t_0)(1-t_1)$.
\end{example}

\begin{example}[trefoil]\label{ex.trefoil}
$w=\sigma_1^3$, $n=2$. The lower right entries of $\rho(x_3)\rho(x_2)\rho(x_1)$ and of
$\rho(y_3)^{\T}\rho(y_2)^{\T}\rho(y_1)^{\T}$ are $x_1(1-x_2)$ and $y_3(1-y_2)$, so
\[
   P_w=1-x_1+x_1x_2,\qquad \tP_w=1-y_3+y_2y_3 .
\]
The only monomial of $P_w\tP_w$ containing a product $x_jy_j$ is $x_1x_2\cdot y_2y_3\mapsto t_0t_1$ (instead of
$t_0^2t_1^2$). Hence
\[
   V_1(3_1)=(1-t_0+t_0^2)(1-t_1+t_1^2)+t_0t_1(1-t_0t_1)
           =1-t_0-t_1+t_0^2+t_1^2+2t_0t_1-t_0^2t_1-t_0t_1^2 .
\]
\end{example}

\begin{example}[figure-eight knot]
$w=\sigma_1\sigma_2^{-1}\sigma_1\sigma_2^{-1}$, $n=3$. One finds
\[
   P_w=x_2+x_4-x_1x_4-x_2x_4+x_1x_2x_4,\qquad \tP_w=y_2+y_4-y_2y_4-y_3y_4+y_2y_3y_4 ,
\]
and, with $\BE[x_1]=t_0$, $\BE[x_2]=\BE[x_4]=t_0^{-1}$, $\BE[y_3]=t_1$, $\BE[y_2]=\BE[y_4]=t_1^{-1}$, $\BE[x_jy_j]=1$,
\[
  V_1(4_1)=7-3\bigl(t_0+t_1+t_0^{-1}+t_1^{-1}\bigr)+2\bigl(t_0t_1+t_0^{-1}t_1^{-1}\bigr)+t_0t_1^{-1}+t_0^{-1}t_1 .
\]
\end{example}

\subsection{A consequence}
Put $\bx_j=t_0^{\ve_j}$, $\by_j=t_1^{\ve_j}$ and
\begin{equation}\label{eq.gj}
   g_j=1-(t_0t_1)^{\ve_j}=1-\bx_j\by_j .
\end{equation}

\begin{corollary}[expansion around the free theory]\label{cor.expansion}
Write $P_w(\bx+\d)=\sum_{S}a_S\,\d^S$ and $\tP_w(\by+\d')=\sum_Sb_S\,\d'^S$ with
$a_S\in\BZ[t_0^{\pm1}]$, $b_S\in\BZ[t_1^{\pm1}]$, the sums running over $S\subseteq\{1,\dots,c\}$. Then
\[
   V_1(L;t_0,t_1)=(-1)^{\mu-1}\sum_{S\subseteq\{1,\dots,c\}}\ \Bigl(\prod_{j\in S}g_j\Bigr)\,a_S\,b_S .
\]
The term $S=\emptyset$ is $P_w(\bx)\tP_w(\by)$, the product of the two ordinary Burau determinants at $t_0$ and
$t_1$; all other terms are divisible by $1-t_0t_1$.
\end{corollary}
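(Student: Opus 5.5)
The plan is to start from Theorem~\ref{thm.B} and exploit the multiaffinity of $P_w$ and $\tP_w$ (Lemma~\ref{lem.multiaffine}). Since $P_w(x)$ has degree $\le1$ in each $x_j$, Taylor expansion around the point $x=\bx$ is exact:
\[
   P_w(\bx+\d)=\sum_{S\subseteq\{1,\dots,c\}}a_S\,\d^S,\qquad a_S=\partial_x^SP_w\big|_{x=\bx}\in\BZ[t_0^{\pm1}].
\]
The same holds for $\tP_w$ with coefficients $b_S$ at $y=\by$. The coefficients are integral Laurent polynomials because $P_w$ has integer coefficients and $\bx_j=t_0^{\pm1}$. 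I will substitute $x_j=\bx_j+\d_j$ and $y_j=\by_j+\d'_j$, where $\d_j=x_j-\bx_j$ and $\d'_j=y_j-\by_j$ are elements of $\CA$. Then
\[
   P_w(x)\tP_w(y)=\sum_{S,S'}a_Sb_{S'}\prod_{j\in S}(x_j-\bx_j)\prod_{j\in S'}(y_j-\by_j).
\]
Each summand is a product over crossings $j$ of a factor $f_j$ that is affine in $x_j$ and $y_j$ and involves only those variables, so \eqref{eq.Emult} applies.

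Next I compute the one-crossing expectations using \eqref{eq.E1} with $u=\bx_j$, $v=\by_j$. For $j$ in neither $S$ nor $S'$ the factor is $1$. For $j\in S\setminus S'$ the factor is $x_j-\bx_j$, and $\BE[x_j-\bx_j]=0$. Similarly, for $j\in S'\setminus S$ we get $\BE[y_j-\by_j]=0$. For $j\in S\cap S'$ the factor is
\[
   (x_j-\bx_j)(y_j-\by_j)=x_jy_j-\by_jx_j-\bx_jy_j+\bx_j\by_j,
\]
whose expectation is $1-\bx_j\by_j-\bx_j\by_j+\bx_j\by_j=1-\bx_j\by_j=g_j$. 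Hence only the terms with $S=S'$ survive, and each contributes $a_Sb_S\prod_{j\in S}g_j$. Multiplying by $(-1)^{\mu-1}$ from Theorem~\ref{thm.B} gives the formula.

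For the final claims, the term $S=\emptyset$ is $a_\emptyset b_\emptyset=P_w(\bx)\tP_w(\by)$, the product of the two ordinary Burau determinants. For $S\ne\emptyset$, every $g_j=1-(t_0t_1)^{\ve_j}$ is divisible by $1-t_0t_1$ in $\BZ[t_0^{\pm1},t_1^{\pm1}]$, since $1-(t_0t_1)^{-1}=-(t_0t_1)^{-1}(1-t_0t_1)$.

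The only real care needed is justifying that $\BE$ can be applied factor by factor after re-expanding in $\d$. This is legitimate because \eqref{eq.Emult} is stated for arbitrary factors $f_j$ affine in $x_j$ and $y_j$, and $\BE$ is linear, so I expect no genuine obstacle.
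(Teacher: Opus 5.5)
Your proposal is correct and matches the paper's proof: you substitute $x_j=\bx_j+\d_j$, $y_j=\by_j+\d'_j$, apply \eqref{eq.Emult} with $\BE[\d_j]=\BE[\d'_j]=0$ and $\BE[\d_j\d'_j]=g_j$ so that only the terms with $S=S'$ survive, and conclude by Theorem~\ref{thm.B}. Your identity $1-(t_0t_1)^{-1}=-(t_0t_1)^{-1}(1-t_0t_1)$ also spells out the divisibility claim, which the paper leaves implicit.
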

\begin{proof}
With $x_j=\bx_j+\d_j$, $y_j=\by_j+\d'_j$ we have $\BE[\d_j]=\BE[\d'_j]=0$ and
$\BE[\d_j\d'_j]=\BE[x_jy_j]-\bx_j\by_j=g_j$. By \eqref{eq.Emult},
$\BE[\d^S\d'^{S'}]=\prod_{j\in S}g_j$ if $S=S'$ and $0$ otherwise. Hence
$\BE[P_w(x)\tP_w(y)]=\sum_S\bigl(\prod_{j\in S}g_j\bigr)a_Sb_S$, and the claim follows from Theorem~\ref{thm.B}.
\end{proof}

\begin{remark}[the free point and the Alexander polynomial]\label{rem.burau}
If $t_0t_1=1$ then all $g_j$ vanish and
$V_1(L;t,t^{-1})=(-1)^{\mu-1}P_w(t^{\ve})\,\tP_w(t^{-\ve})$, a product of two Burau determinants. By the classical
theorem of Burau, $\det(1-B')$ at $x_j=t^{\ve_j}$ equals the Alexander polynomial up to a
unit. More precisely, let $\Delta_L(t)=\nabla_L(t^{1/2}-t^{-1/2})$ be the Conway-normalised Alexander polynomial; it
satisfies $\Delta_L(t^{-1})=(-1)^{\mu-1}\Delta_L(t)$. Then
$P_w(t^{\ve})=(-1)^{\mu-1}\,t^{(e-n+1)/2}\Delta_L(t)$ with $e=\sum_j\ve_j$, the letters $\sigma_i$ being positive
crossings. Moreover $\tP_w(t^{\ve})=P_w(t^{\ve})$: indeed $\rho(t)^{\T}=\diag(1,t)\rho(t)\diag(1,t)^{-1}$, and the same holds
for $\rho^-(t^{-1})=\rho(t)^{-1}$; hence
$\tB(t^\ve)=T\,B(t^\ve)\,T^{-1}$ with $T=\diag(1,t,\dots,t^{n-1})$, and conjugation by a diagonal matrix commutes
with deleting the first row and column. So one recovers
$V_1(L;t,t^{-1})=(-1)^{\mu-1}\Delta_L(t)\Delta_L(t^{-1})=\Delta_L(t)^2$ \cite{Kohli}, and
Corollary~\ref{cor.expansion} says
\[
     V_1(L;t_0,t_1)=(-1)^{\mu-1}\,P_w(t_0^{\ve})\,P_w(t_1^{\ve})+(1-t_0t_1)(\cdots),\qquad
   P_w(t^{\ve})=(-1)^{\mu-1}\,t^{(e-n+1)/2}\Delta_L(t).
\]
\end{remark}

\section{Lagrangian forms: edges, Berezin integral and spanning trees}\label{sec.lagrangian}

In this section we rewrite Theorem~\ref{thm.B} in the style of \cite[Section~6]{KauffmanSaleur}: first as a determinant
indexed by the edges of the closed braid diagram, then as a Berezin integral and finally as a state sum over pairs of spanning trees.
Throughout, $w=s_1\cdots s_c$ is a braid word on $n$ strands such that every position $p\in\{1,\dots,n\}$ is
moved by at least one letter (i.e.\ $p\in\{\nu_j,\nu_j+1\}$ for some $j$). This is no restriction: otherwise the
closure contains a circle disjoint from the rest of the diagram and one sees directly from \eqref{eq.P} that
$P_w=0$ if $n\ge2$.\footnote{If position $p\ge2$ is not moved, the $p$-th row of $B(x)$ is the $p$-th unit vector,
so $1-B(x)'$ has a zero row. If position $1$ is not moved, $B(x)=1\oplus B_{\ge2}(x)$ and $1-B(x)'=1-B_{\ge2}(x)$
kills the vector $(1,\dots,1)^{\T}$, because every $\rho^{\pm}(x_j)$ fixes $(1,1)^{\T}$.}

\subsection{Two determinant lemmas}

\begin{lemma}[cyclic products]\label{lem.cyclic}
Let $C_1,\dots,C_c$ be $n\times n$ matrices. Let $\CC$ be the $nc\times nc$ matrix, with rows and columns
indexed by pairs $(p,j)$, $1\le p\le n$, $j\in\BZ/c$, defined by
$\CC[(p',j+1),(p,j)]=(C_j)_{p'p}$ and $\CC[(p',j'),(p,j)]=0$ if $j'\ne j+1$. Then
\[
   \det(1-\CC)=\det(1-C_cC_{c-1}\cdots C_1).
\]
\end{lemma}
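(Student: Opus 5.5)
We prove the identity by block Gaussian elimination, viewing $\CC$ as a block-cyclic matrix. Order the index set by blocks $j=1,\dots,c$, each block consisting of the $n$ indices $(p,j)$. Then $\CC$ has exactly $c$ nonzero $n\times n$ blocks: the block in position $(j+1,j)$ is $C_j$ for $j=1,\dots,c-1$, and the block in position $(1,c)$ is $C_c$. Consequently $1-\CC$ has identity blocks on the diagonal, $-C_j$ on the block subdiagonal, and $-C_c$ in the upper right corner. Since a permutation of the indices that is applied simultaneously to rows and columns does not change the determinant, this ordering is harmless.

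The plan is to eliminate the subdiagonal blocks one column-block at a time. Equivalently, we solve $(1-\CC)v=0$ blockwise. For an exact determinant statement, however, we multiply $1-\CC$ on the left by the unipotent block lower-triangular matrix $E$. Its block rows are defined recursively so that block row $j+1$ gets block row $j$, multiplied on the left by $C_j$, added to it. Concretely, the block of $E$ in position $(k,j)$ for $j<k\le c$ is $C_{k-1}\cdots C_j$.

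Since $E$ is unipotent, $\det E=1$. The product $E(1-\CC)$ is block upper triangular, with identity blocks in diagonal positions $1,\dots,c-1$. The last column-block, originally $(-C_c,0,\dots,0,1)^{\T}$, is transformed so that its diagonal entry becomes $1-C_{c-1}\cdots C_1C_c$. Hence
\[
\det(1-\CC)=\det(1-C_{c-1}\cdots C_1C_c)=\det(1-C_cC_{c-1}\cdots C_1),
\]
where the last equality uses $\det(1-XY)=\det(1-YX)$ with $X=C_c$ and $Y=C_{c-1}\cdots C_1$.

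We can avoid the cyclic shift altogether by eliminating in the other direction, starting from the corner block. A cleaner alternative is the Schur complement: split off the last block and apply $\det\begin{psmallmatrix}A&b\\ c&d\end{psmallmatrix}=\det A\,\det(d-cA^{-1}b)$. Here $A$ is block unipotent lower triangular with $\det A=1$, and $A^{-1}$ has the products $C_{k-1}\cdots C_j$ as its blocks. The Schur complement is then $1-C_{c-1}\cdots C_1C_c$.

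The only real care needed is bookkeeping. We must check that the index convention $\CC[(p',j+1),(p,j)]=(C_j)_{p'p}$ puts $C_j$, not its transpose, in block $(j+1,j)$. We must also handle the degenerate case $c=1$ separately: there $\CC=C_1$ and the identity is trivial. Neither point is a genuine obstacle.
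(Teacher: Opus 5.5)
Your proof is correct and is essentially the paper's argument: both are block Gaussian elimination, or Schur complements, on the block-cyclic matrix $1-\CC$ with identity diagonal blocks. The only difference is that the paper eliminates one level at a time by induction, using $\det\begin{psmallmatrix}A&B\\D&1\end{psmallmatrix}=\det(A-BD)$ to merge $C_1,C_2$ into $C_2C_1$. This way the product comes out directly as $C_c\cdots C_1$, without the final appeal to $\det(1-XY)=\det(1-YX)$.
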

\begin{proof}
We use the identity $\det\begin{psmallmatrix}A&B\\D&1\end{psmallmatrix}=\det(A-BD)$ for block matrices, which
follows from
$\begin{psmallmatrix}A&B\\D&1\end{psmallmatrix}\begin{psmallmatrix}1&0\\-D&1\end{psmallmatrix}
=\begin{psmallmatrix}A-BD&B\\0&1\end{psmallmatrix}$. For $c=1$ there is nothing to prove. Let $c\ge2$ and order the
index set so that the level $j=2$ comes last. In the corresponding block form of $1-\CC$, the last diagonal
block is $1$, the block $D$ (rows of level $2$) has the single non-zero entry $-C_1$ in the columns of level $1$,
and the block $B$ (columns of level $2$) has the single non-zero entry $-C_2$ in the rows of level $3$ (level $1$
if $c=2$). Hence $A-BD$ is $1-\CC^\flat$, where $\CC^\flat$ is the matrix of the same kind built from
the $c-1$ matrices $C_2C_1,C_3,\dots,C_c$. The claim follows by induction on $c$.
\end{proof}

\begin{lemma}[merging two nodes]\label{lem.merge}
Let $Z$ be a square matrix with rows and columns indexed by a finite set $\CN$, and let $\xi\ne\zeta$ in $\CN$ be
such that the column $\xi$ of $Z$ and the row $\zeta$ of $Z$ both have the single non-zero entry $Z[\zeta,\xi]=1$.
Let $Z^\flat$ be the matrix indexed by $\CN\smallsetminus\{\zeta\}$ with
\[
   Z^\flat[k,l]=Z[k,l]\ \ (l\ne\xi),\qquad Z^\flat[k,\xi]=Z[k,\zeta].
\]
Then $\det(1-Z)=\det(1-Z^\flat)$. Moreover, if $d\in\CN$ and the subscript $\hat d$ denotes deletion of the row and
column $d$, then $\det\bigl((1-Z)_{\hat d}\bigr)=\det\bigl((1-Z^\flat)_{\hat d}\bigr)$ for $d\notin\{\xi,\zeta\}$, and
$\det\bigl((1-Z)_{\hat d}\bigr)=\det\bigl((1-Z^\flat)_{\hat\xi}\bigr)$ for $d\in\{\xi,\zeta\}$.
\end{lemma}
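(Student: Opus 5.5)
Order the index set so that $\zeta$ comes last and $\xi$ comes second to last. Write $N=1-Z$. The hypothesis says that column $\xi$ of $N$ is $e_\xi-e_\zeta$: its only entries are $1$ at $(\xi,\xi)$ and $-1$ at $(\zeta,\xi)$. Note that $Z[\xi,\xi]=0$ because $\xi\ne\zeta$. Likewise, row $\zeta$ of $N$ is $e_\zeta^{\T}-e_\xi^{\T}$: its entries are $1$ at $(\zeta,\zeta)$ and $-1$ at $(\zeta,\xi)$. The plan is to perform one column operation and then expand along a row.

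\emph{Column operation.} Add column $\zeta$ of $N$ to column $\xi$; this does not change the determinant. The new column $\xi$ has entries $1-Z[k,\zeta]$ for $k=\xi$ and $-Z[k,\zeta]$ for $k\ne\xi,\zeta$. Its entry in row $\zeta$ is $-1+1=0$. The other entries of row $\zeta$ were already zero, except for the entry $1$ at $(\zeta,\zeta)$.

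\emph{Row expansion.} Expanding along row $\zeta$, which now has the single entry $1$ on the diagonal, removes row and column $\zeta$ with no sign. The remaining matrix, indexed by $\CN\smallsetminus\{\zeta\}$, has column $\xi$ equal to $e_\xi-Z[\cdot,\zeta]$, which is column $\xi$ of $1-Z^\flat$. All its other columns are those of $1-Z$ restricted to rows $k\ne\zeta$, which are exactly the columns of $1-Z^\flat$. This gives $\det(1-Z)=\det(1-Z^\flat)$.

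\emph{Minors.} For $d\notin\{\xi,\zeta\}$, deleting row and column $d$ leaves column $\xi$ and row $\zeta$ intact. The hypothesis still holds for $Z_{\hat d}$, and $(Z_{\hat d})^\flat=(Z^\flat)_{\hat d}$, so the same two steps apply verbatim.

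For $d=\xi$, delete row and column $\xi$ of $N$. Row $\zeta$ of the result has only the diagonal entry $1$, since the $-1$ sat in column $\xi$. Expanding along it gives the matrix $N$ with rows and columns $\xi,\zeta$ removed. This agrees with $(1-Z^\flat)_{\hat\xi}$, because $Z^\flat$ differs from $Z$ only in column $\xi$, which has been deleted.

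For $d=\zeta$, delete row and column $\zeta$ of $N$. Column $\xi$ of the result is now $e_\xi$. Expanding along it gives the same matrix $N$ with rows and columns $\xi,\zeta$ removed.

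This proves all three identities. No step is a real obstacle; the only care needed is bookkeeping of signs, and these are trivial because each expansion is along a diagonal entry.
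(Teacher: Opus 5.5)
Your proof is correct and follows the paper's argument: add column $\zeta$ to column $\xi$ so that row $\zeta$ becomes the $\zeta$-th unit row, then expand along it. For the minors you do the same, applying the argument verbatim when $d\notin\{\xi,\zeta\}$, and expanding directly along the unit row $\zeta$ (when $d=\xi$) or the unit column $\xi$ (when $d=\zeta$).
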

\begin{proof}
The row $\zeta$ of $1-Z$ has the entries $1$ in column $\zeta$ and $-1$ in column $\xi$, and zeros elsewhere. Adding
column $\zeta$ to column $\xi$ does not change the determinant and makes row $\zeta$ equal to the $\zeta$-th unit row;
expanding along it removes row and column $\zeta$. For $k\ne\zeta$ the new entry in position $(k,\xi)$ is
$(\d_{k\xi}-Z[k,\xi])+(\d_{k\zeta}-Z[k,\zeta])=\d_{k\xi}-Z[k,\zeta]$, because $Z[k,\xi]=0$ for $k\ne\zeta$. This is
$(1-Z^\flat)[k,\xi]$, which proves the first claim. For $d\notin\{\xi,\zeta\}$ the same argument applies to the
matrices with row and column $d$ deleted. If $d=\xi$: in $(1-Z)_{\hat\xi}$ the row $\zeta$ is the $\zeta$-th unit row,
and expanding along it gives the determinant of $1-Z$ with rows and columns $\xi,\zeta$ deleted, which is
$(1-Z^\flat)_{\hat\xi}$. If $d=\zeta$: in $(1-Z)_{\hat\zeta}$ the column $\xi$ is the $\xi$-th unit column, with the
same conclusion.
\end{proof}

\subsection{The edge matrix of a closed braid}\label{ssec.edges}
Consider the set of \emph{nodes} $\CN=\{1,\dots,n\}\times\BZ/c$; the node $(p,j)$ stands for ``position $p$, just
before the letter $s_j$''. Let $\BT(x)$ be the matrix $\CC$ of Lemma~\ref{lem.cyclic} for
$C_j=B_j(x_j)$, i.e.
\[
   \BT(x)\bigl[(p',j+1),(p,j)\bigr]=B_j(x_j)_{p'p},
\]
and let $\widetilde{\BT}(y)$ be the same with $\tB_j(y_j)$.
A node $(p,j)$ is \emph{passive} if $p\notin\{\nu_j,\nu_j+1\}$ and \emph{active} otherwise. If $(p,j)$ is passive,
then the column $(p,j)$ and the row $(p,j+1)$ of $\BT(x)$ (and of $\widetilde\BT(y)$) have the single non-zero
entry $1$ in position $\bigl((p,j+1),(p,j)\bigr)$.

The \emph{edges} of the closed braid diagram are the equivalence classes of nodes for the equivalence relation
generated by $(p,j)\sim(p,j+1)$ for passive $(p,j)$. By our assumption on $w$, every edge is a chain
$(p,j),(p,j+1),\dots,(p,j+m)$ whose last node is active, all others being passive, and such that $(p,j-1)$ is active.
We say that this edge \emph{enters} the crossing $j+m$ and \emph{leaves} the crossing $j-1$. Each crossing $j$ has
two incoming edges $\mathrm{in}_L(j)\ni(\nu_j,j)$, $\mathrm{in}_R(j)\ni(\nu_j+1,j)$ and two outgoing edges
$\mathrm{out}_L(j)\ni(\nu_j,j+1)$, $\mathrm{out}_R(j)\ni(\nu_j+1,j+1)$; there are $2c$ edges.
We denote by $\hate$ the edge containing the node $(1,1)$ (the place where the closed braid is cut open).

It is convenient to name the four edges at a crossing according to the sign:
\begin{equation}\label{eq.ou}
 (o_j,u_j,o'_j,u'_j)=\begin{cases}
   (\mathrm{in}_L,\mathrm{in}_R,\mathrm{out}_R,\mathrm{out}_L)(j)&\text{if }\ve_j=+1,\\
   (\mathrm{in}_R,\mathrm{in}_L,\mathrm{out}_L,\mathrm{out}_R)(j)&\text{if }\ve_j=-1.
 \end{cases}
\end{equation}
If $\sigma_i$ is drawn as the positive crossing in which the strand going from the lower left to the upper right
is the over-strand, then $o_j,u_j$ are the incoming over- and under-edges and $o'_j,u'_j$ the outgoing ones.
(This is only a name; nothing below depends on the picture.)

\begin{definition}\label{def.T}
The \emph{edge matrices} $T^a(x)$ and $T^b(y)$ have rows and columns indexed by the edges, and
\begin{align*}
   T^a(x)&=\sum_{j=1}^c\Bigl(E_{o'_jo_j}+(1-x_j)\,E_{u'_jo_j}+x_j\,E_{u'_ju_j}\Bigr),\\
   T^b(y)&=\sum_{j=1}^c\Bigl(y_j\,E_{o'_jo_j}+(1-y_j)\,E_{u'_jo_j}+E_{u'_ju_j}\Bigr),
\end{align*}
where $E_{e'e}$ is the matrix unit with entry $1$ in row $e'$ and column $e$.
\end{definition}

In words: at each crossing a fermion of flavor $a$ on the edge $o$ continues on $o'$ with weight $1$ or drops
to $u'$ with weight $1-x_j$, and one on $u$ continues on $u'$ with weight $x_j$; for flavor $b$ the weights are
$y_j$, $1-y_j$ and $1$. For $y_j=t^{\ve_j}$ the rules for the flavor $b$ are the familiar ``random walk'' rules for
the Burau representation (a walker on the over-strand continues with weight $t^{\pm1}$ or drops to the under-strand
with weight $1-t^{\pm1}$), and those for the flavor $a$ differ from them by a diagonal change of basis. Note that,
with $e_f$ denoting the unit column vector of the edge $f$,
\begin{equation}\label{eq.rank1}
  T^a(x)=T^a(0)+\sum_jx_j\;e_{u'_j}\bigl(e_{u_j}-e_{o_j}\bigr)^{\T},\qquad
  T^b(y)=T^b(0)+\sum_jy_j\;\bigl(e_{o'_j}-e_{u'_j}\bigr)e_{o_j}^{\T}.
\end{equation}

\begin{proposition}[edge form]\label{prop.edge}
With the notation above,
\[
   P_w(x)=\det\bigl((1-T^a(x))_{\hate}\bigr),\qquad \tP_w(y)=\det\bigl((1-T^b(y))_{\hate}\bigr),
\]
where the subscript $\hate$ denotes deletion of the row and column of the edge $\hate$. Consequently
\[
   V_1(L;t_0,t_1)=(-1)^{\mu-1}\ \BE\Bigl[\det\bigl((1-T^a(x))_{\hate}\bigr)\,\det\bigl((1-T^b(y))_{\hate}\bigr)\Bigr].
\]
\end{proposition}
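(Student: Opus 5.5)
Start from the node matrix $\BT(x)$, the matrix $\CC$ of Lemma~\ref{lem.cyclic} for $C_j=B_j(x_j)$, and compare two things: the reduced determinant $\det\bigl((1-\BT(x))_{\widehat{(1,1)}}\bigr)$ with $\det(1-B(x)')$, and then that reduced node determinant with $\det\bigl((1-T^a(x))_{\hate}\bigr)$ via Lemma~\ref{lem.merge}. The argument for $\tP_w$ is identical, with $\widetilde\BT(y)$ and $\tB_j(y_j)$.

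\emph{Step 1: the reduced version of Lemma~\ref{lem.cyclic}.} Deleting the row and column $(1,1)$ from $1-\BT(x)$ should give $\det(1-B(x)')$. I would redo the induction of Lemma~\ref{lem.cyclic} keeping the row and column $(1,1)$ deleted, eliminating the levels $j=2,\dots,c$ one at a time with the block identity $\det\begin{psmallmatrix}A&B\\D&1\end{psmallmatrix}=\det(A-BD)$. Level $1$ is never eliminated, so only the node $(1,1)$ is missing throughout. At the end one is left with level $1$ alone, minus its first node, with matrix $1-C_c\cdots C_1$, i.e.\ $\det(1-B(x)')$.

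The delicate point is $c=1$ (and the wrap-around block for $c=2$), which can be checked directly. An alternative is to introduce a parameter $\lambda$ scaling the column $(1,1)$ and use linearity of the determinant in that column. This reduces the reduced identity to the full identity of Lemma~\ref{lem.cyclic} applied with $C_1$ replaced by $C_1\,\diag(\lambda,1,\dots,1)$: compare the coefficients of $\lambda$ on both sides.

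\emph{Step 2: contracting passive nodes.} For each passive node $(p,j)$, the column $(p,j)$ and the row $(p,j+1)$ of $\BT(x)$ have the single nonzero entry $1$, at position $((p,j+1),(p,j))$. So Lemma~\ref{lem.merge} applies with $\xi=(p,j)$ and $\zeta=(p,j+1)$, merging the two nodes into one that inherits the row of $\zeta$. Repeating this collapses each edge chain $(p,j),\dots,(p,j+m)$ to a single node, indexed by the edge. The surviving node carries the row of the active last node $(p,j+m)$, hence the entries of $B_{j+m}$ at the crossing the edge enters.

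The "moreover" part of Lemma~\ref{lem.merge} lets the deleted index $(1,1)$ ride along. Whenever $(1,1)$ is one of the two merged nodes, the deletion passes to the merged node, so at the end the deleted index is $\hate$. Since merging only occurs along passive steps, the hypotheses of the lemma are preserved after each merge, and the order of merges is irrelevant.

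\emph{Step 3: identify the contracted matrix with $T^a(x)$.} This is the main bookkeeping obstacle. After contraction, the entry in row $e'$ and column $e$ is $(B_j)_{p'p}$, where $e$ enters crossing $j$ at position $p$ and $e'$ leaves crossing $j$ at position $p'$. Using $\rho(x)=\begin{psmallmatrix}1-x&x\\1&0\end{psmallmatrix}$ and the naming convention~\eqref{eq.ou}, for $\ve_j=+1$ this gives:
\begin{itemize}
\item $\mathrm{in}_L\to\mathrm{out}_L$: weight $1-x$, i.e.\ $o\to u'$;
\item $\mathrm{in}_R\to\mathrm{out}_L$: weight $x$, i.e.\ $u\to u'$;
\item $\mathrm{in}_L\to\mathrm{out}_R$: weight $1$, i.e.\ $o\to o'$.
\end{itemize}
This matches Definition~\ref{def.T}. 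For $\ve_j=-1$, the matrix $\rho^-(x)=\begin{psmallmatrix}0&1\\x&1-x\end{psmallmatrix}$ with the swapped names gives the same three weights. For flavor $b$ the transposes $\rho(y)^{\T},\rho^-(y)^{\T}$ give the weights $y$, $1-y$, $1$ as in $T^b$.

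Combining Steps 1–3 with Theorem~\ref{thm.B} gives the displayed formula for $V_1$.
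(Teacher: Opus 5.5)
Your proposal follows the paper's proof: you kill the column of the node $(1,1)$ to reduce to Lemma~\ref{lem.cyclic}, merge the passive nodes by Lemma~\ref{lem.merge}, and read off $T^a$, $T^b$ from $\rho^{\pm}$ and their transposes exactly as in your Step~3. In Step~1, take the constant term of your $\lambda$-trick, i.e.\ set $\lambda=0$; this gives $\det(1-B(x)\Pi)=\det(1-B(x)')$ with $\Pi=\diag(0,1,\dots,1)$, whereas the paper kills the row as well and gets $\det(1-\Pi B(x)\Pi)$.

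Fix one slip in Step~2. In Lemma~\ref{lem.merge} the surviving node $\xi$ inherits the \emph{column} of $\zeta$, not its row, since the row of $\zeta$ is just $e_\xi^{\T}$. So each edge keeps the row of its first node and the column of its last, active node, and that column is where the entries of $B_{j+m}$ sit. This is exactly what your Step~3 formula for the entry in row $e'$ and column $e$ uses.
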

\begin{proof}
\emph{Step 1.} Let $\BT^\circ$ be $\BT(x)$ with the row and the column of the node $(1,1)$ replaced by zero. Then
$\det((1-\BT(x))_{\widehat{(1,1)}})=\det(1-\BT^\circ)$ (expand along the row $(1,1)$ of $1-\BT^\circ$, which is a
unit row). The matrix $\BT^\circ$ is of the form considered in Lemma~\ref{lem.cyclic}, with $C_1=B_1\Pi$,
$C_c=\Pi B_c$ ($C_1=\Pi B_1\Pi$ if $c=1$) and $C_j=B_j$ otherwise, where $\Pi=\diag(0,1,\dots,1)$. Hence
\[
  \det\bigl((1-\BT(x))_{\widehat{(1,1)}}\bigr)=\det\bigl(1-\Pi\,B(x)\,\Pi\bigr)=\det\bigl(1-B(x)'\bigr)=P_w(x),
\]
because $\Pi B(x)\Pi=0\oplus B(x)'$.

\emph{Step 2.} If $(p,j)$ is a passive node, then $\xi=(p,j)$ and $\zeta=(p,j+1)$ satisfy the hypothesis of
Lemma~\ref{lem.merge} for $Z=\BT(x)$. Merging $\zeta$ into $\xi$ produces a matrix indexed by
$\CN\smallsetminus\{\zeta\}$ in which $\xi$ has inherited the column of $\zeta$; if $\zeta$ was itself passive, the pair
$(\xi,(p,j+2))$ again satisfies
the hypothesis, and the hypothesis for all other passive nodes is unaffected. Applying Lemma~\ref{lem.merge}
successively to all passive nodes, each edge (a chain of nodes) is merged into its first node, which inherits the
column of its last, active, node. The resulting matrix is indexed by the edges, and its entry in row $e'$ and
column $e$ is $\BT(x)[(\text{first node of }e'),(\text{last node of }e)]$, i.e.\ the sum over the crossings $j$
which $e$ enters and $e'$ leaves of the corresponding entry of the block $\rho^{\ve_j}(x_j)$ of $B_j(x_j)$, whose
rows correspond to $(\mathrm{out}_L,\mathrm{out}_R)$ and columns to $(\mathrm{in}_L,\mathrm{in}_R)$. Comparing
\[
  \rho(x)=\bordermatrix{&\scriptstyle o&\scriptstyle u\cr \scriptstyle u'&1-x&x\cr \scriptstyle o'&1&0},\qquad
  \rho^-(x)=\bordermatrix{&\scriptstyle u&\scriptstyle o\cr \scriptstyle o'&0&1\cr \scriptstyle u'&x&1-x}
\]
with \eqref{eq.ou} and Definition~\ref{def.T}, this matrix is $T^a(x)$. By the last part of Lemma~\ref{lem.merge},
the deleted node $(1,1)$ becomes the deleted edge $\hate$. Thus
$\det((1-T^a(x))_{\hate})=\det((1-\BT(x))_{\widehat{(1,1)}})=P_w(x)$.

\emph{Step 3.} The same argument applies to $\widetilde\BT(y)$, whose blocks are the transposed matrices
\[
  \rho(y)^{\T}=\bordermatrix{&\scriptstyle o&\scriptstyle u\cr \scriptstyle u'&1-y&1\cr \scriptstyle o'&y&0},\qquad
  \rho^-(y)^{\T}=\bordermatrix{&\scriptstyle u&\scriptstyle o\cr \scriptstyle o'&0&y\cr \scriptstyle u'&1&1-y},
\]
and gives $T^b(y)$. The last assertion is Theorem~\ref{thm.B}.
\end{proof}

\subsection{The Berezin integral}
Let $\CB$ be the exterior algebra over $\BK$ on generators $\psi_k,\psi^+_k$, $k\in\mathcal K$, where $\mathcal K$ is a
finite set with $N$ elements. As in \cite{KauffmanSaleur}, $\psi^+_k$ is an independent generator, not a conjugate.
Elements of even degree commute with all elements of $\CB$. If $\chi\in\CB$ has no constant term, then
$\exp(\chi)=\sum_{m\ge0}\chi^m/m!$ is a finite sum, and $\exp(\chi+\chi')=\exp(\chi)\exp(\chi')$ if $\chi,\chi'$ are even.
The \emph{Berezin integral} is the linear form
\[
   \int\colon\CB\to\BK,\qquad \int\omega=\text{the coefficient of }\prod_{k\in\mathcal K}\psi_k\psi_k^+\text{ in }\omega
\]
(the factors $\psi_k\psi_k^+$ are even, so their order is irrelevant). This is the normalisation
\cite[(6.3)]{KauffmanSaleur}.

\begin{lemma}[Gaussian integral]\label{lem.gauss}
For every matrix $A=(A_{kl})_{k,l\in\mathcal K}$ with entries in $\BK$,
\[
   \int\exp\Bigl(\sum_{k,l\in\mathcal K}\psi_k\,A_{kl}\,\psi^+_l\Bigr)=\det A .
\]
\end{lemma}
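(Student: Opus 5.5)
We expand both sides and compare coefficients. Write $\chi=\sum_{k,l}\psi_kA_{kl}\psi^+_l$. Each summand $\psi_kA_{kl}\psi^+_l$ is even, so all summands commute with each other. Each also squares to zero, because it contains the factor $\psi_k\psi_k=0$. Hence the multinomial expansion of $\exp(\chi)$ gives
\[
\exp(\chi)=\sum_{m\ge0}\frac{\chi^m}{m!}=\sum_{m\ge0}\frac1{m!}\sum_{(k_1,l_1),\dots,(k_m,l_m)}\prod_{i=1}^m\psi_{k_i}A_{k_il_i}\psi^+_{l_i}.
\]
Only the degree-$2N$ part contributes to the Berezin integral, so we may restrict to $m=N$.

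In the $m=N$ part, a product is non-zero only if the $k_i$ are pairwise distinct and the $l_i$ are pairwise distinct. Since there are $N$ indices, $\{k_i\}=\{l_i\}=\mathcal K$. The factors $\psi_{k_i}A_{k_il_i}\psi^+_{l_i}$ commute, so we reorder them so that $k_i$ runs through a fixed enumeration $\kappa_1,\dots,\kappa_N$ of $\mathcal K$. The $N!$ orderings of a given set of pairs therefore all give the same term, which cancels the $1/N!$. What remains is
\[
\sum_{\pi}\prod_{i=1}^N A_{\kappa_i\pi(\kappa_i)}\;\psi_{\kappa_1}\psi^+_{\pi(\kappa_1)}\cdots\psi_{\kappa_N}\psi^+_{\pi(\kappa_N)},
\]
where $\pi$ runs over the bijections of $\mathcal K$.

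The remaining step, and the only real content, is the sign identity
\[
\psi_{\kappa_1}\psi^+_{\pi(\kappa_1)}\cdots\psi_{\kappa_N}\psi^+_{\pi(\kappa_N)}=\sgn(\pi)\prod_{k}\psi_k\psi^+_k .
\]
The right-hand side does not depend on the ordering, since the pairs $\psi_k\psi_k^+$ are even. To prove the identity, start from $\prod_i\psi_{\kappa_i}\psi^+_{\pi(\kappa_i)}$ and multiply $\pi$ by one transposition $\tau$ of two values. This swaps $\psi^+_{\pi(\kappa_i)}$ and $\psi^+_{\pi(\kappa_{i'})}$ inside the product. Each $\psi^+$ is odd and the stretch between the two positions has odd total degree: it consists of whole even pairs plus the single odd element $\psi_{\kappa_{i'}}$. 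Swapping them therefore produces the sign $-1$. Concretely,
\[
\psi^+_a\,X\,\psi_c\,\psi^+_b=-\,\psi^+_b\,X\,\psi_c\,\psi^+_a
\]
for even $X$, which can be checked by moving each odd factor past the others. For $\pi=\id$ the identity holds trivially. Induction on the number of transpositions in $\pi$ gives the general case.

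Putting this together, the Berezin integral is $\sum_\pi\sgn(\pi)\prod_kA_{k\pi(k)}=\det A$. The only point needing care is this sign bookkeeping; everything else is the standard multinomial expansion.
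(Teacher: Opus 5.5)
Your proof is correct and is essentially the paper's argument. The paper groups the terms by rows, $\theta_k=\sum_l A_{kl}\psi^+_l$, so that $\exp=\prod_k(1+\psi_k\theta_k)$ directly and no $1/N!$ count is needed; it obtains $\sgn(\pi)$ by moving all $\psi$'s to the left at a cost $s_N$ independent of $\pi$, rather than by induction on transpositions. One small correction: swapping two odd factors gives $-1$ whatever the parity of the stretch between them, so the odd degree of that stretch is not what produces the sign, although your displayed identity is correct.
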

\begin{proof}
Put $\th_k=\sum_lA_{kl}\psi_l^+$. The elements $\psi_k\th_k$ are even and $(\psi_k\th_k)^2=0$, so
$\exp(\sum_k\psi_k\th_k)=\prod_k(1+\psi_k\th_k)$, and its component of top degree $2N$ is
$\prod_k\psi_k\th_k$. Number $\mathcal K=\{1,\dots,N\}$. Then
\[
  \prod_{k=1}^N\psi_k\th_k=\sum_{l_1,\dots,l_N}A_{1l_1}\cdots A_{Nl_N}\ \psi_1\psi^+_{l_1}\psi_2\psi^+_{l_2}\cdots\psi_N\psi^+_{l_N},
\]
and a term vanishes unless $k\mapsto l_k$ is a permutation $\pi$. Moving all $\psi$'s to the left costs a sign
$s_N$ which does not depend on $\pi$, so
\begin{align*}
 \psi_1\psi^+_{\pi(1)}\cdots\psi_N\psi^+_{\pi(N)}&=s_N\,\psi_1\cdots\psi_N\,\psi^+_{\pi(1)}\cdots\psi^+_{\pi(N)}
   =s_N\,\sgn(\pi)\,\psi_1\cdots\psi_N\,\psi^+_{1}\cdots\psi^+_{N}\\
   &=\sgn(\pi)\,\psi_1\psi_1^+\cdots\psi_N\psi_N^+ .
\end{align*}
Hence the integral is $\sum_\pi\sgn(\pi)\prod_kA_{k\pi(k)}=\det A$.
\end{proof}

\begin{lemma}\label{lem.nil}
Let $p=(p_k)$ and $\ell=(\ell_k)$ be vectors indexed by $\mathcal K$ and
\[
   X=\Bigl(\sum_kp_k\psi_k\Bigr)\Bigl(\sum_{k'}\ell_{k'}\psi_{k'}^+\Bigr).
\]
Then $X$ is even, $X^2=0$, and $\exp(zX)=1+zX$ for every scalar $z$. Moreover, for every matrix $A$ indexed by
$\mathcal K$,
\[
   \sum_{k,k'}\psi_k\bigl(A-z\,p\ell^{\T}\bigr)_{kk'}\psi_{k'}^+=\sum_{k,k'}\psi_kA_{kk'}\psi^+_{k'}-zX .
\]
\end{lemma}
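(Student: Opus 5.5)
Each claim follows from the rank-one structure of $X$ and the parity of the generators, so I will check them in turn.

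First, $X$ is even. Put $\Psi=\sum_kp_k\psi_k$ and $\Psi^+=\sum_{k'}\ell_{k'}\psi^+_{k'}$. Both are odd elements of $\CB$, being linear combinations of generators, so $X=\Psi\Psi^+$ has degree $2$.

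Second, $X^2=0$. I will write
\[
X^2=\Psi\Psi^+\Psi\Psi^+ .
\]
Because $X$ is even, it commutes with everything, so $X^2=X\Psi\Psi^+=\Psi X\Psi^+=\Psi\Psi\Psi^+\Psi^+$. Since $\Psi$ is odd, $\Psi\wedge\Psi=0$. Hence $X^2=0$, and the exponential series truncates to $\exp(zX)=1+zX$.

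Third, the bilinear identity. This is a direct expansion, using only the linearity of the expression $\sum_{k,k'}\psi_k M_{kk'}\psi^+_{k'}$ in the matrix $M$. We have
\[
\sum_{k,k'}\psi_k\,(p\ell^{\T})_{kk'}\,\psi^+_{k'}=\sum_{k,k'}p_k\ell_{k'}\,\psi_k\psi^+_{k'}=\Bigl(\sum_kp_k\psi_k\Bigr)\Bigl(\sum_{k'}\ell_{k'}\psi^+_{k'}\Bigr)=X .
\]
The scalars commute past everything, so no signs arise. Subtracting $z$ times this from the $A$-term gives the displayed formula.

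I do not expect any real obstacle. The one point needing care is the sign bookkeeping in $X^2$. Using that $X$ is even avoids counting transpositions of $\Psi$ past $\Psi^+$: either route gives $\pm\Psi^2(\Psi^+)^2$, and this vanishes regardless of the sign.

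Presumably this is then combined with Lemma~\ref{lem.gauss} and \eqref{eq.rank1} to write $\det(1-T^a(x))_{\hate}$ as a Berezin integral. In that integral each $x_j$ enters through a factor $1+(\cdot)X_j$, which makes the multi-affinity and the functional $\BE$ transparent.
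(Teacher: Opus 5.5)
Your proof is correct and is essentially the paper's argument: both write $X$ as a product of two degree-one elements and conclude from the vanishing of the square of a degree-one element, and both treat the bilinear identity as a direct expansion. The only cosmetic difference is that you reorder $\Psi\Psi^+\Psi\Psi^+$ using the centrality of the even element $X$ rather than a single transposition; this is harmless, since $\Psi^2=0$ makes the resulting sign irrelevant.
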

\begin{proof}
$X=\th\th'$ with $\th,\th'$ of degree one, and $\th\th'\th\th'=-\th^2\th'^2=0$. The last statement is clear.
\end{proof}

We now take two copies of this structure, one for each flavor. Let $\CB$ be the Grassmann algebra on the
generators
\[
   \psi^a_e,\ \psi^{a+}_e,\ \psi^b_e,\ \psi^{b+}_e,\qquad e\text{ an edge of the closed braid diagram},\ e\ne\hate \text{ the cut edge} ,
\]
and put $\psi^\sigma_{\hate}=\psi^{\sigma+}_{\hate}=0$ for $\sigma\in\{a,b\}$. Let $\bx_j=t_0^{\ve_j}$, $\by_j=t_1^{\ve_j}$,
$g_j=1-(t_0t_1)^{\ve_j}$ as in \eqref{eq.gj}, and define the \emph{free actions}
\begin{align*}
 \CS_a&=\sum_{e}\psi^a_e\psi^{a+}_e-\sum_{j=1}^c\Bigl(\psi^a_{o'_j}\psi^{a+}_{o_j}+(1-\bx_j)\,\psi^a_{u'_j}\psi^{a+}_{o_j}+\bx_j\,\psi^a_{u'_j}\psi^{a+}_{u_j}\Bigr),\\
 \CS_b&=\sum_{e}\psi^b_e\psi^{b+}_e-\sum_{j=1}^c\Bigl(\by_j\,\psi^b_{o'_j}\psi^{b+}_{o_j}+(1-\by_j)\,\psi^b_{u'_j}\psi^{b+}_{o_j}+\psi^b_{u'_j}\psi^{b+}_{u_j}\Bigr),
\end{align*}
that is, $\CS_\sigma=\sum_{e,e'}\psi^\sigma_e\bigl(1-T^\sigma\bigr)_{ee'}\psi^{\sigma+}_{e'}$ with the edge matrices of
Definition~\ref{def.T} evaluated at $x=\bx$, $y=\by$. Define also, for each crossing $j$,
\begin{equation}\label{eq.XY}
   X_j=\psi^a_{u'_j}\bigl(\psi^{a+}_{u_j}-\psi^{a+}_{o_j}\bigr),\qquad
   Y_j=\bigl(\psi^b_{o'_j}-\psi^b_{u'_j}\bigr)\psi^{b+}_{o_j}.
\end{equation}

\begin{theorem}\label{thm.C}
With the notation above,
\[
   V_1(L;t_0,t_1)=(-1)^{\mu-1}\int\exp\Bigl(\CS_a+\CS_b+\sum_{j=1}^cg_j\,X_jY_j\Bigr).
\]
Explicitly, the interaction is the
sum over the crossings of the quartic vertices
\[
   \bigl(1-(t_0t_1)^{\ve_j}\bigr)\ \psi^a_{u'_j}\bigl(\psi^{a+}_{u_j}-\psi^{a+}_{o_j}\bigr)\bigl(\psi^b_{o'_j}-\psi^b_{u'_j}\bigr)\psi^{b+}_{o_j}.
\]
\end{theorem}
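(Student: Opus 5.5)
Start from Proposition~\ref{prop.edge} together with the expansion of Corollary~\ref{cor.expansion}, and read both sides as Gaussian integrals. The edge matrices $T^a(x)$, $T^b(y)$ are affine in the crossing variables with rank-one coefficients, as displayed in \eqref{eq.rank1}. Substitute $x_j=\bx_j+\d_j$ and $y_j=\by_j+\d'_j$. Then
\[
(1-T^a(x))_{\hate}=(1-T^a(\bx))_{\hate}-\sum_j\d_j\,p_j\ell_j^{\T},\qquad p_j=e_{u'_j},\ \ell_j=e_{u_j}-e_{o_j},
\]
with the row and column of $\hate$ deleted. The analogous formula holds for flavor $b$, with $p_j=e_{o'_j}-e_{u'_j}$ and $\ell_j=e_{o_j}$. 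Setting $\psi^\sigma_{\hate}=\psi^{\sigma+}_{\hate}=0$ implements exactly this deletion.

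By Lemma~\ref{lem.gauss}, each determinant becomes a Berezin integral over its own flavor. By Lemma~\ref{lem.nil}, the quadratic form of $(1-T^a(x))_{\hate}$ equals $\CS_a-\sum_j\d_jX_j$ with $X_j$ as in \eqref{eq.XY}. Similarly, that of $(1-T^b(y))_{\hate}$ equals $\CS_b-\sum_j\d'_jY_j$. The $X_j$ are even and pairwise commute, and $X_j^2=0$. Hence
\[
P_w(x)=\int\exp(\CS_a)\prod_j(1-\d_jX_j),
\]
and similarly for $\tP_w(y)$. The two flavors use disjoint generators and all exponents are even, so the product of the two integrals is a single integral over the combined algebra $\CB$. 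Here one must check that the normalisation $\prod\psi_k\psi_k^+$ for the union is the product of the two normalisations; this holds since these factors are even.

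Now apply $\BE$ inside the integral. This is legitimate because the integrand is multiaffine in each pair $(\d_j,\d'_j)$ and $\BE$ is linear. By \eqref{eq.Emult}, $\BE$ factors over crossings. For each $j$,
\[
\BE\bigl[(1-\d_jX_j)(1-\d'_jY_j)\bigr]=1+g_jX_jY_j,
\]
because $\BE[\d_j]=\BE[\d'_j]=0$ and $\BE[\d_j\d'_j]=g_j$, as in the proof of Corollary~\ref{cor.expansion}. The order of the factors is harmless since $X_j$ and $Y_j$ are even. Finally, $(X_jY_j)^2=X_j^2Y_j^2=0$, so $1+g_jX_jY_j=\exp(g_jX_jY_j)$. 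All these even elements commute, so the product of exponentials is the exponential of the sum. The prefactor $(-1)^{\mu-1}$ is carried over from Proposition~\ref{prop.edge}.

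The main point requiring care is the sign bookkeeping. First, Lemma~\ref{lem.nil} must produce precisely the $X_j,Y_j$ of \eqref{eq.XY}, with the correct ordering of $\psi$ and $\psi^+$ and the correct sign of the rank-one term. Second, the combined Berezin integral must equal the product of the two flavor integrals without an extra sign. Both reduce to the facts that the relevant elements are even and that the orders are fixed consistently. Checking them against \eqref{eq.rank1} is routine.
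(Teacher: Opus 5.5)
Your proof is correct and is essentially the paper's argument: substitute $x_j=\bx_j+\d_j$, $y_j=\by_j+\d'_j$, and use \eqref{eq.rank1} with Lemmas~\ref{lem.nil} and~\ref{lem.gauss} to write $P_w(\bx+\d)\,\tP_w(\by+\d')$ as $\int\exp(\CS_a+\CS_b)\prod_j(1-\d_jX_j)(1-\d'_jY_j)$; then use $\BE[\d_j]=\BE[\d'_j]=0$ and $\BE[\d_j\d'_j]=g_j$. The differences are cosmetic: the paper applies Lemma~\ref{lem.gauss} once to the block-diagonal matrix on the combined index set, so the normalisation check is not needed, and it compares coefficients of $\d^S\d'^S$ via Corollary~\ref{cor.expansion}, whereas you apply $\BE$ directly inside the integral, crossing by crossing.
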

\begin{proof}
Let $\d_1,\dots,\d_c,\d'_1,\dots,\d'_c$ be indeterminates. By \eqref{eq.rank1} and Lemma~\ref{lem.nil},
\[
  \sum_{e,e'}\psi^a_e\bigl(1-T^a(\bx+\d)\bigr)_{ee'}\psi^{a+}_{e'}=\CS_a-\sum_j\d_jX_j,\qquad
  \sum_{e,e'}\psi^b_e\bigl(1-T^b(\by+\d')\bigr)_{ee'}\psi^{b+}_{e'}=\CS_b-\sum_j\d'_jY_j .
\]
Apply Lemma~\ref{lem.gauss} to the index set $\mathcal K=\{a,b\}\times\{e\ne\hate\}$ and to the block diagonal matrix
with blocks $(1-T^a(\bx+\d))_{\hate}$ and $(1-T^b(\by+\d'))_{\hate}$ (the lemma and its proof hold over the
polynomial ring $\BK[\d,\d']$); by Proposition~\ref{prop.edge} its determinant is $P_w(\bx+\d)\,\tP_w(\by+\d')$. Since the $X_j,Y_j$ are even with square zero (Lemma~\ref{lem.nil}), we get
\[
   \int\exp(\CS_a+\CS_b)\ \prod_{j=1}^c(1-\d_jX_j)(1-\d'_jY_j)\;=\;P_w(\bx+\d)\,\tP_w(\by+\d') ,
\]
an identity between polynomials in $\d,\d'$. Comparing the coefficients of $\d^S\d'^S$ for $S\subseteq\{1,\dots,c\}$
gives $\int\exp(\CS_a+\CS_b)\prod_{j\in S}X_jY_j=a_Sb_S$ in the notation of Corollary~\ref{cor.expansion}. Multiplying
by $\prod_{j\in S}g_j$ and summing over $S$,
\[
  \int\exp(\CS_a+\CS_b)\prod_{j=1}^c(1+g_jX_jY_j)=\sum_S\Bigl(\prod_{j\in S}g_j\Bigr)a_Sb_S=(-1)^{\mu-1}\,V_1(L;t_0,t_1)
\]
by Corollary~\ref{cor.expansion}. Finally $\prod_j(1+g_jX_jY_j)=\exp(\sum_jg_jX_jY_j)$, because the $X_jY_j$ are even
and $(X_jY_j)^2=0$.
\end{proof}

\begin{remark}
Without the vertices, Theorem~\ref{thm.C} is the Gaussian integral of \cite[(6.13)]{KauffmanSaleur} taken once for each
flavor, with value $P_w(\bx)\tP_w(\by)$, up to the normalisation of the vertex weights (Kauffman and Saleur use a
symmetric normalisation of the Burau weights and a global prefactor). There are two further differences of
presentation. Kauffman and Saleur label their
Grassmann variables by the pairs (crossing, strand through this crossing) rather than by the edges. And they keep
the variables at the two ends of the edge where the diagram is cut open, omitting only the propagator of this edge,
so that their matrix has size $2c$ instead of $2c-1$. In the present notation this amounts to replacing
$(1-T^\sigma)_{\hate}$ by $1-T^\sigma-E_{\hate\hate}$, and
$\det(1-T^\sigma-E_{\hate\hate})=\det(1-T^\sigma)-\det((1-T^\sigma)_{\hate})=-\det((1-T^\sigma)_{\hate})$, because
$\det(1-T^\sigma)=0$ (the rows of $T^a$ and the columns of $T^b$ have sum $1$). So the two flavors are free
``Alexander fermions'' at $t_0$ and $t_1$, coupled only by a contact interaction at the crossings, of strength
$1-(t_0t_1)^{\pm1}$.
\end{remark}

\subsection{A state sum over pairs of spanning trees}\label{ssec.trees}
The matrix $1-T^a(x)$ has zero row sums and $1-T^b(y)$ has zero column sums. So the determinants of
Proposition~\ref{prop.edge} are of the kind computed by the matrix-tree theorem, which turns them into sums over
spanning trees. We first state this theorem in the form that we need, with a proof.

\begin{lemma}[matrix-tree theorem]\label{lem.mtt}
Let $Z$ be a square matrix with rows and columns indexed by a finite set $\CN$, with entries in a commutative ring,
and let $d\in\CN$. Assume that $\sum_{l\in\CN}Z[k,l]=1$ for every $k\ne d$. Call a map
$f\colon\CN\smallsetminus\{d\}\to\CN$ a \emph{tree rooted at $d$} if every $k\ne d$ is mapped to $d$ by some
iterate of $f$. Then, with the notation of Lemma~\ref{lem.merge},
\[
   \det\bigl((1-Z)_{\hat d}\bigr)=\sum_{f}\ \prod_{k\ne d}Z[k,f(k)] ,
\]
where the sum runs over all trees rooted at $d$.
\end{lemma}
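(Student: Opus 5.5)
We prove the matrix-tree theorem (Lemma~\ref{lem.mtt}) by the standard multilinear expansion of $\det\bigl((1-Z)_{\hat d}\bigr)$ combined with a sign-reversing involution on the non-tree configurations. Write $\CN^\circ=\CN\smallsetminus\{d\}$. For $k\in\CN^\circ$ the row hypothesis gives $1-Z[k,k]=\sum_{l\ne k}Z[k,l]$. So the row $k$ of $(1-Z)_{\hat d}$ equals
\[
\sum_{l\in\CN,\ l\ne k}Z[k,l]\,\bigl(e_k-e_l\bigr)^{\T},
\]
restricted to the columns $\CN^\circ$, with the convention $e_d=0$. This holds whether or not the entry $Z[k,d]$ is zero.

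\emph{Step 1 (expansion).} By multilinearity in the rows,
\[
\det\bigl((1-Z)_{\hat d}\bigr)=\sum_{f}\ \prod_{k\in\CN^\circ}Z[k,f(k)]\ \det\bigl(M_f\bigr),
\]
where $f$ runs over all maps $\CN^\circ\to\CN$ with $f(k)\ne k$. Here $M_f$ is the matrix whose row $k$ is $e_k-e_{f(k)}$ (with $e_d=0$), restricted to the columns $\CN^\circ$. The task is to show $\det M_f=1$ if $f$ is a tree rooted at $d$, and $\det M_f=0$ otherwise.

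\emph{Step 2 (non-trees).} Suppose $f$ is not a tree. Then some iterate orbit of $f$ never reaches $d$, so the functional graph of $f$ contains a cycle $k_1\to k_2\to\dots\to k_m\to k_1$ inside $\CN^\circ$, with $m\ge2$ since $f(k)\ne k$. The rows $e_{k_i}-e_{k_{i+1}}$ along this cycle sum to zero, so $\det M_f=0$. No involution is actually needed, because these terms vanish individually.

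\emph{Step 3 (trees).} Suppose $f$ is a tree rooted at $d$. Order $\CN^\circ$ by decreasing distance to $d$ along $f$, so that $f(k)$ precedes $k$ or equals $d$. In this order $M_f$ is triangular with diagonal entries $1$: row $k$ has a $1$ in column $k$ and a $-1$ only in column $f(k)$, which comes earlier or is absent. Hence $\det M_f=1$, since simultaneously permuting rows and columns does not change the determinant. Summing over $f$ gives the formula.

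The only point needing care is Step 1: the diagonal term $1-Z[k,k]$ must be absorbed correctly using the row-sum hypothesis. This is also where a neighbor $f(k)=d$ contributes the vector $e_k$ alone, because column $d$ has been deleted. Rows and columns of $Z$ indexed by $d$ never enter, consistent with the hypothesis being imposed only for $k\ne d$. The case $\CN^\circ=\emptyset$ is trivial: both sides are $1$, with the empty product over the unique empty map.
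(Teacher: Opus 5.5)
Your proof is correct, and it differs from the paper's in how you evaluate the coefficient of $\prod_{k\ne d}Z[k,f(k)]$.

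\textbf{The paper's route.} It expands $\det\bigl((1-Z)_{\hat d}\bigr)$ over permutations of $\CN^\circ$ first, and uses the row-sum hypothesis only at the fixed points. It then needs a bijection between (permutation, choices at the fixed points) and pairs $(f,C)$, where $C$ is a set of cycles of $f$. The non-tree terms cancel by the identity $\sum_C(-1)^{|C|}=0$ unless $f$ has no cycle.

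\textbf{Your route.} You rewrite every row as $\sum_{l\ne k}Z[k,l](e_k-e_l)^{\T}$ before expanding, so the coefficient of each monomial is $\det M_f$. You then evaluate $\det M_f$ by linear algebra: a cycle gives rows summing to zero, and a tree gives a unitriangular matrix.

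\textbf{Comparison.} The two computations are really the same. If you expand $\det M_f$ over permutations, the only contributing $\sigma$ are those that agree with $f$ on a union of cycles of $f$ and are the identity elsewhere. Each such cycle carries total sign $-1$, so $\det M_f=\sum_C(-1)^{|C|}$. Your packaging avoids the bijection bookkeeping and the inclusion--exclusion step. The paper's version shows the cancellation explicitly at the level of individual signed monomials.

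\textbf{A small slip in Step 3.} If you order $\CN^\circ$ by decreasing distance to $d$, then $f(k)$ comes after $k$, not before. The matrix $M_f$ is then upper rather than lower unitriangular, so $\det M_f=1$ still holds.
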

\begin{proof}
Put $\CN^\circ=\CN\smallsetminus\{d\}$ and $A=(1-Z)_{\hat d}$. For $k\ne l$ in $\CN^\circ$ we have $A[k,l]=-Z[k,l]$,
and by the assumption on the row sums
\[
   A[k,k]=1-Z[k,k]=\sum_{l\in\CN,\ l\ne k}Z[k,l]\qquad(k\in\CN^\circ).
\]
Expand $\det A=\sum_\sigma\sgn(\sigma)\prod_{k\in\CN^\circ}A[k,\sigma(k)]$ over the permutations $\sigma$ of
$\CN^\circ$. A cycle $(k_1\,k_2\,\cdots\,k_m)$ of $\sigma$ of length $m\ge2$ contributes the sign $(-1)^{m-1}$ to
$\sgn(\sigma)$ and the factor $(-1)^m\prod_iZ[k_i,\sigma(k_i)]$ to the product, in total $-\prod_iZ[k_i,\sigma(k_i)]$.
For each fixed point $k$ of $\sigma$ we replace $A[k,k]$ by the sum above and expand. The result is
\[
   \det A=\sum_{(f,C)}(-1)^{|C|}\prod_{k\in\CN^\circ}Z[k,f(k)] ,
\]
where $f$ runs over the maps $\CN^\circ\to\CN$ without fixed points and $C$ runs over the sets of cycles of $f$. Here a
\emph{cycle} of $f$ is a subset $\{k_1,\dots,k_m\}$ of $\CN^\circ$ with $m\ge2$ and $f(k_i)=k_{i+1}$ for all $i$
(indices modulo $m$); distinct cycles of $f$ are disjoint. Indeed, a permutation $\sigma$ together with the choices
made at its fixed points gives the pair $(f,C)$ in which $f$ is $\sigma$ on the cycles of $\sigma$ of length $\ge2$
and the chosen index at the fixed points, and $C$ is the set of these cycles; conversely $(f,C)$ determines $\sigma$,
which is $f$ on the union of the cycles in $C$ and the identity elsewhere, and the choices.

For a fixed $f$ the sum $\sum_C(-1)^{|C|}$ over all sets $C$ of cycles of $f$ is $1$ if $f$ has no cycle and $0$
otherwise. Finally, a map $f\colon\CN^\circ\to\CN$ without fixed points and without cycles is a tree rooted at $d$:
for $k\in\CN^\circ$ the sequence $k,f(k),f(f(k)),\dots$ cannot stay in the finite set $\CN^\circ$ for ever, because
its first repetition would produce a fixed point or a cycle; so it reaches $d$. Conversely a tree rooted at $d$ has
neither fixed points nor cycles.
\end{proof}

We apply this to the edge matrices of Definition~\ref{def.T}. Recall that every edge leaves exactly one crossing
and enters exactly one crossing.

\begin{definition}\label{def.trees}
An \emph{$a$-tree} is a map $\phi$ from the set of edges different from $\hate$ to the set of all edges such that
\[
   \phi(o'_j)=o_j\qquad\text{and}\qquad\phi(u'_j)\in\{u_j,o_j\}\qquad\text{for all crossings }j
\]
(as far as $o'_j$, resp.\ $u'_j$, is different from $\hate$), and such that every edge different from $\hate$ is mapped
to $\hate$ by some iterate of $\phi$. A \emph{$b$-tree} is a map $\psi$ from the set of edges different from $\hate$
to the set of all edges such that
\[
   \psi(u_j)=u'_j\qquad\text{and}\qquad\psi(o_j)\in\{o'_j,u'_j\}\qquad\text{for all crossings }j
\]
(as far as $u_j$, resp.\ $o_j$, is different from $\hate$), with the same condition on the iterates. For an $a$-tree
$\phi$, a $b$-tree $\psi$ and a crossing $j$ we put
\[
  \vartheta_j(\phi)=\begin{cases}0&\text{if }\phi(u'_j)=u_j,\\ 1&\text{if }\phi(u'_j)=o_j,\\ \emptyset&\text{if }u'_j=\hate,\end{cases}
  \qquad\qquad
  \vartheta_j(\psi)=\begin{cases}0&\text{if }\psi(o_j)=o'_j,\\ 1&\text{if }\psi(o_j)=u'_j,\\ \emptyset&\text{if }o_j=\hate.\end{cases}
\]
\end{definition}

In words: an $a$-tree follows the strands of the diagram backwards. From an outgoing over-edge it goes back along
the over-strand; from an outgoing under-edge it either goes back along the under-strand ($\vartheta_j=0$) or switches
to the over-strand ($\vartheta_j=1$). A $b$-tree follows the strands forwards. From an incoming under-edge it
continues along the under-strand; from an incoming over-edge it either continues along the over-strand
($\vartheta_j=0$) or switches to the under-strand ($\vartheta_j=1$). In both cases the choices at the $c$ crossings
must lead every edge to the edge $\hate$ where the diagram is cut.

Let $p_0(z)=z$, $p_1(z)=1-z$ and $p_\emptyset(z)=1$, and recall the notation $\bx_j=t_0^{\ve_j}$, $\by_j=t_1^{\ve_j}$,
$g_j=1-\bx_j\by_j$ of \eqref{eq.gj}.

\begin{proposition}[pairs of spanning trees]\label{prop.trees}
\begin{enumerate}\renewcommand{\theenumi}{\roman{enumi}}
\item With $\phi$ running over the $a$-trees and $\psi$ over the $b$-trees,
      \[
         P_w(x)=\sum_{\phi}\ \prod_{j=1}^cp_{\vartheta_j(\phi)}(x_j),\qquad
         \tP_w(y)=\sum_{\psi}\ \prod_{j=1}^cp_{\vartheta_j(\psi)}(y_j).
      \]
\item With the sum running over all pairs of an $a$-tree $\phi$ and a $b$-tree $\psi$,
      \[
         V_1(L;t_0,t_1)=(-1)^{\mu-1}\sum_{(\phi,\psi)}\ \prod_{j=1}^cW_j\bigl(\vartheta_j(\phi),\vartheta_j(\psi)\bigr),
      \]
      where the local weights
      $W_j(\vartheta,\vartheta')=\BE_{\bx_j,\by_j}\bigl[p_{\vartheta}(x)\,p_{\vartheta'}(y)\bigr]$ are given by
      \[
        \renewcommand{\arraystretch}{1.2}
        \begin{array}{c|ccc}
          W_j(\vartheta,\vartheta') & \vartheta'=0 & \vartheta'=1 & \vartheta'=\emptyset\\ \hline
          \vartheta=0 & 1 & \bx_j-1 & \bx_j\\
          \vartheta=1 & \by_j-1 & 2-\bx_j-\by_j & 1-\bx_j\\
          \vartheta=\emptyset & \by_j & 1-\by_j & 1
        \end{array}
      \]
\item For $\vartheta,\vartheta'\in\{0,1\}$,
      \[
         W_j(\vartheta,\vartheta')=p_\vartheta(\bx_j)\,p_{\vartheta'}(\by_j)+(-1)^{\vartheta+\vartheta'}g_j ,
      \]
      and $W_j(\vartheta,\vartheta')=p_\vartheta(\bx_j)\,p_{\vartheta'}(\by_j)$ if $\vartheta=\emptyset$ or
      $\vartheta'=\emptyset$.
\end{enumerate}
\end{proposition}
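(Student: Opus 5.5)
Part (i) comes from applying the matrix-tree theorem, Lemma~\ref{lem.mtt}, to the edge forms of Proposition~\ref{prop.edge}, with root $d=\hate$.

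\textbf{The flavor $a$.} Take $Z=T^a(x)$. By Definition~\ref{def.T}, the row of an edge $e$ is nonzero only at the crossing $j$ that $e$ leaves. For $e=o'_j$ the row is the single entry $1$ in column $o_j$. For $e=u'_j$ the row has entries $1-x_j$ in column $o_j$ and $x_j$ in column $u_j$. So every row sums to $1$, which is the hypothesis of Lemma~\ref{lem.mtt}. The lemma gives $\det((1-T^a)_{\hate})=\sum_f\prod_{k\ne\hate}Z[k,f(k)]$. The nonzero terms are exactly the maps $f$ with $f(o'_j)=o_j$ and $f(u'_j)\in\{u_j,o_j\}$ that reach $\hate$, i.e.\ the $a$-trees. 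The weight of an $a$-tree $\phi$ factors over crossings:
\begin{itemize}
\item the factor $1$ from $o'_j$;
\item the factor $x_j=p_0(x_j)$ or $1-x_j=p_1(x_j)$ from $u'_j$, according to $\vartheta_j(\phi)$;
\item the factor $p_\emptyset=1$ when $u'_j=\hate$, since then there is no row.
\end{itemize}
One degenerate case needs checking: if $o_j=u_j$, both entries of row $u'_j$ sit in the same column and add up. This is still accounted for correctly if the sum over trees is read as being over the two choices $\vartheta_j\in\{0,1\}$. So $a$-trees are best defined with that labelling, and I would say so explicitly.

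\textbf{The flavor $b$.} Here $1-T^b(y)$ has zero column sums, so I apply Lemma~\ref{lem.mtt} to the transpose $Z=T^b(y)^{\T}$; the determinant of the minor is unchanged. Now the row of $e$ is governed by the crossing that $e$ enters:
\begin{itemize}
\item for $e=u_j$, the single entry $1$ in column $u'_j$;
\item for $e=o_j$, the entries $y_j=p_0(y_j)$ in column $o'_j$ and $1-y_j=p_1(y_j)$ in column $u'_j$.
\end{itemize}
This yields $b$-trees with exactly the stated weights, including $\vartheta_j(\psi)=\emptyset$ when $o_j=\hate$.

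Part (ii) is then a matter of multiplying the two expansions from (i) and applying $\BE$. Each product $\prod_j p_{\vartheta_j(\phi)}(x_j)\,p_{\vartheta_j(\psi)}(y_j)$ is multiaffine in the variables, so \eqref{eq.Emult} factorizes $\BE$ crossing by crossing. The local weight at crossing $j$ is therefore $W_j=\BE_{\bx_j,\by_j}[p_\vartheta(x)p_{\vartheta'}(y)]$. Theorem~\ref{thm.B} supplies the overall sign $(-1)^{\mu-1}$.

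The table entries follow from \eqref{eq.E1}, i.e.\ $x\mapsto\bx$, $y\mapsto\by$, $xy\mapsto1$. For example:
\begin{itemize}
\item $p_0p_0=xy\mapsto1$;
\item $p_0(x)p_1(y)=x-xy\mapsto\bx-1$;
\item $p_1p_1=1-x-y+xy\mapsto2-\bx-\by$.
\end{itemize}
An entry involving $\emptyset$ contains no $xy$ monomial, so there $\BE$ is plain evaluation.

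Part (iii) uses the second form of \eqref{eq.E1}: $\BE_{u,v}[f]=f(u,v)+(1-uv)f_{11}$. The coefficient of $xy$ in $p_\vartheta(x)p_{\vartheta'}(y)$ is $(-1)^{\vartheta+\vartheta'}$ when $\vartheta,\vartheta'\in\{0,1\}$, and $0$ when either index is $\emptyset$. This gives the formula with the correction term $(-1)^{\vartheta+\vartheta'}g_j$.

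The only real obstacle is the bookkeeping in (i). One must check that the row and column conventions of Definition~\ref{def.T} put the tree edges in the right direction: backwards for flavor $a$, forwards for flavor $b$ after transposing. One must also handle coincident edges such as $o_j=u_j$ or $u'_j=o'_j$ (kinks), where matrix entries merge. In those cases I would verify that expanding each row of $Z$ as a sum of its monomial labels still reproduces Lemma~\ref{lem.mtt}. Its proof goes through verbatim if $Z[k,l]$ is replaced by a formal sum of labelled arrows, each arrow's choice recorded by the relevant $\vartheta_j$.
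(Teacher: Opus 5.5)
Your proposal is correct and follows the paper's proof: the matrix-tree theorem (Lemma~\ref{lem.mtt}) is applied to $T^a(x)$ and to $T^b(y)^{\T}$ with root $\hate$ to get (i), then \eqref{eq.Emult} and Theorem~\ref{thm.B} give (ii), and \eqref{eq.E1} gives the table and (iii). The degenerate cases you flag cannot occur. Each edge occupies exactly one incoming slot and one outgoing slot, so $o_j\ne u_j$ and $o'_j\ne u'_j$ always. Kinks such as $u'_j=o_j$ only produce diagonal entries of $Z$, and Lemma~\ref{lem.mtt} already discards these because trees rooted at $\hate$ have no fixed points.
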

\begin{proof}
(i) Every edge is the edge $o'_j$ or the edge $u'_j$ of exactly one crossing $j$. By Definition~\ref{def.T} the row
$o'_j$ of $T^a(x)$ has the single non-zero entry $1$ in the column $o_j$, and the row $u'_j$ has the entries $1-x_j$
in the column $o_j$ and $x_j$ in the column $u_j$. So the rows of $T^a(x)$ have sum $1$, and
Lemma~\ref{lem.mtt}, applied to $Z=T^a(x)$ and $d=\hate$, gives
$\det((1-T^a(x))_{\hate})=\sum_f\prod_{e\ne\hate}T^a(x)[e,f(e)]$. The trees $f$ with a non-zero contribution are
exactly the $a$-trees $\phi$, and the contribution of $\phi$ is $\prod_jp_{\vartheta_j(\phi)}(x_j)$. By
Proposition~\ref{prop.edge} the left-hand side is $P_w(x)$. In the same way, every edge is the edge $o_j$ or the edge
$u_j$ of exactly one crossing; the column $u_j$ of $T^b(y)$ has the single non-zero entry $1$ in the row $u'_j$, and
the column $o_j$ has the entries $y_j$ in the row $o'_j$ and $1-y_j$ in the row $u'_j$. So the transposed matrix
$Z=T^b(y)^{\T}$ satisfies the hypothesis of Lemma~\ref{lem.mtt}, the trees with a non-zero contribution are the
$b$-trees, and $\tP_w(y)=\det((1-T^b(y))_{\hate})=\det((1-Z)_{\hate})=\sum_\psi\prod_jp_{\vartheta_j(\psi)}(y_j)$.

(ii) By (i), $P_w(x)\tP_w(y)=\sum_{(\phi,\psi)}\prod_jp_{\vartheta_j(\phi)}(x_j)\,p_{\vartheta_j(\psi)}(y_j)$, and the
$j$-th factor is a polynomial of degree $\le1$ in $x_j$ and in $y_j$ which involves no other variable. By
\eqref{eq.Emult}, $\BE$ of each product is $\prod_jW_j(\vartheta_j(\phi),\vartheta_j(\psi))$, and the claim follows
from Theorem~\ref{thm.B}. The table is obtained from \eqref{eq.E1}: for instance
$\BE_{u,v}[(1-x)y]=v-1$ and $\BE_{u,v}[(1-x)(1-y)]=2-u-v$.

(iii) This is \eqref{eq.E1} again: $\BE_{u,v}[f]=f(u,v)+(1-uv)f_{11}$, and the coefficient of $xy$ in
$p_\vartheta(x)p_{\vartheta'}(y)$ is $(-1)^{\vartheta+\vartheta'}$ for $\vartheta,\vartheta'\in\{0,1\}$ and $0$ if one of
them is $\emptyset$.
\end{proof}

\begin{remark}\label{rem.trees}
By (iii), $V_1$ is the partition function of two spanning trees of the diagram which interact only at the crossings
where both of them have a choice: there the weight $p_\vartheta(\bx_j)p_{\vartheta'}(\by_j)$ of the two independent
trees is changed by $+g_j$ if they make the same choice and by $-g_j$ if they make different choices. Without
this term, i.e.\ for $t_0t_1=1$, the state sum factors into $P_w(\bx)\tP_w(\by)$ by (i). For $x_j=t^{\ve_j}$ the
weights $x_j$, $1-x_j$ are those of the random walk on the diagram which underlies the Burau representation, and (i)
is the corresponding expansion of the Alexander polynomial. These trees are in general more numerous than the
Kauffman states of the diagram: the closure of $\sigma_1^5$ has $11$ $a$-trees, whereas its diagram has $5$ Kauffman
states.
\end{remark}

\begin{example}[trefoil]\label{ex.trees}
Let $w=\sigma_1^3$, as in Example~\ref{ex.trefoil}. The six edges are the nodes $(p,j)$ with $p\in\{1,2\}$ and
$j\in\BZ/3$, and
\[
   (o_j,u_j,o'_j,u'_j)=\bigl((1,j),\,(2,j),\,(2,j+1),\,(1,j+1)\bigr),\qquad \hate=(1,1)=o_1=u'_3 .
\]
So $\vartheta_3(\phi)=\emptyset$ for every $a$-tree and $\vartheta_1(\psi)=\emptyset$ for every $b$-tree. There are
three $a$-trees, with $(\vartheta_1,\vartheta_2)=(0,0)$, $(1,0)$, $(1,1)$: the fourth choice $(0,1)$ gives the cycle
$(1,2)\mapsto(2,1)\mapsto(1,3)\mapsto(1,2)$. Likewise there are three $b$-trees, with
$(\vartheta_2,\vartheta_3)=(0,0)$, $(0,1)$, $(1,1)$. Hence
\[
   P_w=x_1x_2+(1-x_1)x_2+(1-x_1)(1-x_2),\qquad \tP_w=y_2y_3+y_2(1-y_3)+(1-y_2)(1-y_3),
\]
in agreement with Example~\ref{ex.trefoil}. In the state sum (ii), the $a$-trees $(0,0)$ and $(1,0)$ contribute
$t_0\,\Sigma$ and $(1-t_0)\,\Sigma$ with $\Sigma=t_1+(1-t_1)+(t_0-1)(1-t_1)$, and the $a$-tree $(1,1)$ contributes
$(1-t_0)\bigl[(t_1-1)t_1+(t_1-1)(1-t_1)+(2-t_0-t_1)(1-t_1)\bigr]$. The sum of the nine terms is
\[
   V_1(3_1)=1-(1-t_0)(1-t_1)+(1-t_0)(1-t_1)(1-t_0-t_1)=1-(t_0+t_1)(1-t_0)(1-t_1).
\]
\end{example}

\section{Arbitrary diagrams}\label{sec.diagrams}

In Sections~\ref{sec.theorem} and~\ref{sec.lagrangian} the link was the closure of a braid, cut along its first
strand. We now show that the edge formula of Proposition~\ref{prop.edge} holds for every connected diagram of the
link and for every edge of this diagram (Theorem~\ref{thm.diagrams}). The proof has three ingredients: the case of a
closed braid cut along an arbitrary strand (Section~\ref{ssec.strandp}), the invariance of the right-hand side under
Reidemeister moves of type~II (Section~\ref{ssec.RII}), and the theorem of Yamada and Vogel, according to which such
moves transform every diagram into a closed braid diagram (Section~\ref{ssec.thmdiagrams}). Besides
Theorem~\ref{thm.GHKST}, two results are used here: property (P$_2$) of the $R$-matrix
from \cite[Lemma~3.2]{GHKST}, from which we deduce Theorem~\ref{thm.GHKSTp}, and the theorem of Yamada and Vogel
(Theorem~\ref{thm.vogel}).

\subsection{Diagrams and their edge matrices}\label{ssec.diagdef}
In this section and the next one, a \emph{diagram} is an oriented link diagram $D$ on the oriented sphere $S^2$
which is connected and has $c\ge1$ crossings, numbered $1,\dots,c$. The sign $\ve_j$ of the crossing $j$ is the usual
one: the crossing is positive if, when it is drawn with both strands pointing upwards, the over-strand runs from
the lower left to the upper right, and it is negative otherwise. The crossings cut $D$ into $2c$ oriented arcs, the
\emph{edges} of $D$; every edge leaves one crossing and enters one crossing. At the crossing $j$ we denote by
$o_j,u_j$ the incoming edges of the over-strand and of the under-strand, and by $o'_j,u'_j$ the outgoing ones. The \emph{combinatorial data} of $D$ are the signs $\ve_j$ and the edges $o_j,u_j,o'_j,u'_j$; they
are not changed by an orientation-preserving homeomorphism of $S^2$. The \emph{edge matrices} of $D$ are defined
by the formulas of Definition~\ref{def.T}:
\begin{align*}
   T^a_D(x)&=\sum_{j=1}^c\Bigl(E_{o'_jo_j}+(1-x_j)\,E_{u'_jo_j}+x_j\,E_{u'_ju_j}\Bigr),\\
   T^b_D(y)&=\sum_{j=1}^c\Bigl(y_j\,E_{o'_jo_j}+(1-y_j)\,E_{u'_jo_j}+E_{u'_ju_j}\Bigr).
\end{align*}
For an edge $\hate$ of $D$ we put
\[
   P_{D,\hate}(x)=\det\bigl((1-T^a_D(x))_{\hate}\bigr),\qquad \tP_{D,\hate}(y)=\det\bigl((1-T^b_D(y))_{\hate}\bigr).
\]
As in \eqref{eq.rank1}, $T^a_D(x)$ is the sum of $T^a_D(0)$ and of the matrices $x_j\,e_{u'_j}(e_{u_j}-e_{o_j})^{\T}$ of
rank one, and similarly for $T^b_D(y)$. By Lemma~\ref{lem.rankone}, $P_{D,\hate}$ has degree $\le1$ in each $x_j$ and
$\tP_{D,\hate}$ has degree $\le1$ in each $y_j$. Let $\BE_D$ be the linear map \eqref{eq.E} for the crossings of
$D$: it is defined on the polynomials of degree $\le1$ in each of the variables $x_j,y_j$, and
$\BE_D[x^Sy^{S'}]=\prod_{j\in S\smallsetminus S'}t_0^{\ve_j}\prod_{j\in S'\smallsetminus S}t_1^{\ve_j}$. If $D$ is a diagram
of a link with $\mu$ components, we put
\begin{equation}\label{eq.Upsilon}
   \Upsilon(D,\hate)=(-1)^{\mu-1}\ \BE_D\bigl[P_{D,\hate}(x)\,\tP_{D,\hate}(y)\bigr].
\end{equation}
The polynomials $P_{D,\hate}$, $\tP_{D,\hate}$ and the element $\Upsilon(D,\hate)$ of $\BK$ depend only on the
combinatorial data of $D$ and on $\hate$. Indeed, the map which sends $o_j$ to $o'_j$ and $u_j$ to $u'_j$ for all $j$
is a permutation of the set of edges (the edge which follows a given edge along its strand), and $\mu$ is the
number of its cycles.

Let $w$ be a braid word in which every position is moved by at least one letter, as in
Section~\ref{sec.lagrangian}. Its \emph{closure diagram} $D_w$ is the standard diagram of the closed braid
$\widehat w$, drawn in the plane and viewed on $S^2$: the strands of the braid run upwards, the positions
$1,\dots,n$ are numbered from the left to the right, the letters $s_1,\dots,s_c$ are met in this order from the bottom
to the top, a letter $\sigma_i$ is a positive crossing and a letter $\sigma_i^{-1}$ a negative crossing between the strands
in the positions $i$ and $i+1$, and the $n$ strands are closed by disjoint arcs on the right of the braid. It is a
diagram in the present sense: its crossings are the letters of $w$, its edges are the edges of
Section~\ref{ssec.edges}, and the edges $o_j,u_j,o'_j,u'_j$ are those of \eqref{eq.ou}, because the over-strand of a
positive crossing comes in from the left and the over-strand of a negative crossing comes in from the right. Thus $T^a_{D_w}$, $T^b_{D_w}$ are the matrices of
Definition~\ref{def.T}, and Proposition~\ref{prop.edge} says that $V_1(\widehat w)=\Upsilon(D_w,\hate)$ when $\hate$ is
the edge which contains the node $(1,1)$.

\subsection{Closed braids cut along an arbitrary strand}\label{ssec.strandp}
Let $w=s_1\cdots s_c$ be a braid word on $n$ strands and let $1\le p\le n$. With the notation of
Definition~\ref{def.V} and Remark~\ref{rem.convention}, we put
\[
   V^{(p)}(w)\;=\;\sum_{\substack{I=(m_1,\dots,m_n)\\ m_p=1}}\ \Bigl(\prod_{k\ne p}h_{m_k}\Bigr)\,
       \bigl\langle e_I,\ \varrho(w)\,e_I\bigr\rangle ,
\]
so that $V^{(1)}(w)=V(w)$. This is the $(e_1,e_1)$ matrix entry of the partial trace, over all the tensor factors
except the $p$-th one, of the operator $(h^{\otimes(p-1)}\otimes\id_W\otimes h^{\otimes(n-p)})\circ\varrho(w)$.

\begin{theorem}[{\cite[Section~1.1, Remark~2.2 and Lemma~3.2]{GHKST}}]\label{thm.GHKSTp}
For $r=\pm1$ and $1\le p\le n$, the partial trace over all the tensor factors except the $p$-th one of the operator
$(h^{\otimes(p-1)}\otimes\id_W\otimes h^{\otimes(n-p)})\circ\varrho_r(w)$ is equal to $V_1(\widehat w)\,\id_W$. In particular
$V^{(p)}(w)=V_1(\widehat w)$.
\end{theorem}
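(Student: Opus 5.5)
The plan is to reduce the case of a general $p$ to the case $p=1$, which is Theorem~\ref{thm.GHKST}. The reduction uses the fact that the operator in question is the Reshetikhin--Turaev value of a $(1,1)$-tangle, together with property (P$_2$) of \cite[Lemma~3.2]{GHKST}. For $1\le p\le n$ let
\[
A_p(w)=\tr_{\ne p}\bigl((h^{\otimes(p-1)}\otimes\id_W\otimes h^{\otimes(n-p)})\circ\varrho_r(w)\bigr)\in\End(W).
\]
Since $h=-(-1)^{\a+\b}$ by \eqref{eq.hparity}, $h\otimes h$ is a function of the total bidegree. By Lemma~\ref{lem.blocks}, $R_r^{\pm1}$ preserves the total bidegree, so $h\otimes h$ commutes with $R_r^{\pm1}$, and hence $h^{\otimes n}$ commutes with $\varrho_r(w)$. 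The main tool will be a ``sliding'' identity, which is what (P$_2$) encodes: closing a strand of $R_r^{\pm1}$ with the weight $h$ on the left gives the same $(1,1)$ operator as closing it on the right. Concretely,
\[
\tr_1\bigl((h\otimes\id)R_r^{\pm1}\bigr)=\tr_2\bigl((\id\otimes h)R_r^{\pm1}\bigr),
\]
together with the corresponding statement for partial transposes. In other words, $h$ behaves as a pivotal element compatible with $R_r$.

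First, I will prove that $A_p(w)$ depends only on the $(1,1)$-tangle $\tau_p(w)$ obtained by closing every strand of $w$ except the $p$-th. Closing the strands $q<p$ on the left and the strands $q>p$ on the right, the partial trace over these factors with weights $h$ is the Reshetikhin--Turaev functor applied to $\tau_p(w)$. Its invariance under Reidemeister moves away from the open strand follows from (P$_2$) and from the braid relations, exactly as in \cite[Lemma~3.1]{GHKST} for $p=1$.

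Second, I will show that $\tau_p(w)$ and $\tau_1(w')$ are isotopic as $(1,1)$-tangles for a suitable braid word $w'$ with the same closure. Take $w'=\gamma w\gamma^{-1}$, where $\gamma=\sigma_1\sigma_2\cdots\sigma_{p-1}$ (or its appropriate variant) moves position $p$ to position $1$. The cut point is then the same point of the same component. On the algebraic side, $\varrho_r(\gamma)$ and $\varrho_r(\gamma)^{-1}$ involve the $p$-th factor, so plain cyclicity of the partial trace does not suffice. Instead, I will move the crossings of $\gamma$ one at a time around the closing arcs, using the sliding identity above. Each application exchanges a left closure for a right closure and shifts the open strand by one position. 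This yields $A_p(w)=A_{p-1}(\sigma_{p-1}^{-1}w\sigma_{p-1})$ (or the analogous identity with the opposite sign), and by induction $A_p(w)=A_1(w')$.

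Finally, Theorem~\ref{thm.GHKST} gives $A_1(w')=V_1(\widehat{w'})\,\id_W$, and $\widehat{w'}=\widehat w$, so $A_p(w)=V_1(\widehat w)\,\id_W$. Taking the $(e_1,e_1)$ entry then gives $V^{(p)}(w)=V_1(\widehat w)$. The main obstacle is the one-step shift $A_p\to A_{p-1}$. It requires the precise form of (P$_2$) for both $R_r$ and $R_r^{-1}$, and one must check that the $h$-weights on the remaining closed strands are unaffected, which is where the commutation of $h\otimes h$ with $R_r^{\pm1}$ is used. If (P$_2$) turns out to be stated only for the right closure, I would first verify the left-closure identity directly from \eqref{eq.R}--\eqref{eq.R2} on the nine blocks \eqref{eq.sectors}, since it is a finite check of the $(e_i,e_i)$ partial traces.
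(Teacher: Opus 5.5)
Your reduction breaks down at the one-step shift $A_p(w)=A_{p-1}(\sigma_{p-1}^{\mp1}w\sigma_{p-1}^{\pm1})$. Let $f\in\End(W\otimes W)$ be the $h$-weighted partial trace of $\varrho_r(w)$ over all tensor factors except the $(p-1)$-st and the $p$-th. The conjugating crossing acts only on these two factors, so the shift is exactly the identity
\[
   \tr_1\bigl((h\otimes\id)\,f\bigr)=\tr_2\bigl((\id\otimes h)\,R_r^{\pm1}fR_r^{\mp1}\bigr).
\]
It says that a closing arc can be carried across the open strand from the left to the right.

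This is a statement about $f$, not about $R_r$ alone, and your single-crossing identity $\tr_1((h\otimes\id)R_r^{\pm1})=\tr_2((\id\otimes h)R_r^{\pm1})$ gives no information about it. That identity is only Reidemeister~I on either side: all four of these partial traces are $\id_W$.

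In fact the displayed identity is false for general $f$:
\begin{itemize}
\item For the sign $R_r^{-1}fR_r$, take the map $f$ with $f(e_{21})=e_{11}$ and $f=0$ on the other basis vectors. The left side is $0$, while the right side maps $e_2$ to $(t_0-1)e_1$.
\item For the sign $R_rfR_r^{-1}$, take $f(e_{12})=e_{11}$. The two sides map $e_2$ to $-e_1$ and to $-t_0^{-1}e_1$ respectively.
\end{itemize}
So the identity can hold for your $f$ only because $f$ comes from a tangle. Proving it requires invariance of the tangle operator under moves that involve the open strand, which are exactly the moves your Step~1 excludes. Relatedly, (P$_2$) is not the single-crossing statement: it asserts $F_{T_1}=F_{T_2}$ for \emph{every} $(2,2)$-tangle $T$ with upward boundary.

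Used in that form, (P$_2$) makes the conjugation by $\gamma$ unnecessary; this is the paper's proof.
\begin{itemize}
\item Close the strands $1,\dots,p-1$ of the braid diagram of $w$ on the left and the strands $n,\dots,p+2$ on the right. This gives a $(2,2)$-tangle $\Xi^{(p)}$ with upward boundary.
\item Its right closure is your $\tau_p(w)$, and its left closure is $\tau_{p+1}(w)$.
\item The left closure is computed by $\tr_1(f\circ(h^{-1}\otimes\id))=\tr_1((h\otimes\id)f)$, since $h^{-1}=h$ and an operator acting only on a traced factor may be moved across.
\item Hence (P$_2$) gives $A_{p+1}(w)=A_p(w)$ directly. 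This holds whether or not the strands $p$ and $p+1$ lie on the same component, which is precisely the non-topological content of (P$_2$).
\end{itemize}
Induction from Theorem~\ref{thm.GHKST} then finishes.

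Alternatively, you could keep your route by invoking the full isotopy invariance of the tangle operator $F_T$ built from the cups and caps of \cite[Remark~2.2]{GHKST}, including the moves that pass a closing arc across the open strand. Because your $w'$ is cut on the same component as $w$, $\tau_p(w)$ and $\tau_1(w')$ are then isotopic $(1,1)$-tangles. Step~2 becomes purely topological, and the independence of the cut component is supplied by Theorem~\ref{thm.GHKST}. As written, however, your argument contains neither ingredient.
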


For $p=1$ this is Theorem~\ref{thm.GHKST}. Let us explain how the general case follows from \cite{GHKST}. The
enhanced $R$-matrix $(R_{\pm1},h)$ defines an operator invariant $F_T$ of oriented tangles $T$, which may have closed
components; it is computed from a diagram of $T$ with the cups and caps of
\cite[Remark~2.2]{GHKST}.\footnote{In the last formula of \cite[Remark~2.2]{GHKST} we read
$\overrightarrow{\mathrm{coev}}_V(1)=\sum_ie_i^*\otimes h^{-1}(e_i)$, which gives the formula for
the closure on the left used in the proof of \cite[Lemma~3.2]{GHKST}.} Let $T$ be a tangle diagram whose boundary points are all oriented upwards, and let
$f=F_T$. With these cups and caps, closing the last strand of $T$ on the right gives the partial trace over the
last tensor factor of $(\id\otimes h)\circ f$, and closing the first strand of $T$ on the left gives the partial trace
over the first tensor factor of $f\circ(h^{-1}\otimes\id)$; for instance, for $f\in\End(W)$ the closure on the left is
$\sum_ie_i^*\bigl(f(h^{-1}e_i)\bigr)=\tr(f\circ h^{-1})$. These are the two formulas which are used in the proof of
\cite[Lemma~3.2]{GHKST}, and the first one gives \cite[Remark~2.3]{GHKST}. A partial trace over a tensor factor does
not change when an operator which acts on this factor only is moved from the right of $f$ to the left of $f$, and
$h^{-1}=h$. So the closure on the left also gives the partial trace over the first tensor factor of
$(h\otimes\id)\circ f$.

Let $\beta=s_c\cdots s_1\in B_n$ be the braid of Section~\ref{sec.setup}, whose operator is
$\rho_{R_r}(\beta)=\varrho_r(w)$ and whose closure is $\widehat w$. For $1\le p\le n$ let $\Xi_p$ be the $(1,1)$-tangle
obtained from the diagram of $\beta$ by closing the strands in the positions $1,2,\dots,p-1$ on the left, in this
order, and the strands in the positions $n,n-1,\dots,p+1$ on the right, in this order; thus the closing arc of the
position $1$ is the innermost one on the left, and the closing arc of the position $n$ is the innermost one on the
right. By the above, $F_{\Xi_p}$ is the partial trace of the theorem. For $1\le p\le n-1$ let $\Xi^{(p)}$ be the
$(2,2)$-tangle obtained in the same way by closing the strands in the positions $1,\dots,p-1$ on the left and the
strands in the positions $n,\dots,p+2$ on the right. Its boundary points are oriented upwards. Closing its right
strand on the right gives $\Xi_p$, and closing its left strand on the left gives $\Xi_{p+1}$; these are the two
closures $T_1$ and $T_2$ of the $(2,2)$-tangle $T=\Xi^{(p)}$ in \cite[Figure~1]{GHKST}. Now \cite[Lemma~3.2]{GHKST}
states that the enhanced $R$-matrix $(R_{\pm1},h)$ has the property (P$_2$) of \cite[Section~1.1]{GHKST}:
$F_{T_1}=F_{T_2}$ for every $(2,2)$-tangle $T$ with upward oriented boundary. Hence $F_{\Xi_{p+1}}=F_{\Xi_p}$ for
$1\le p\le n-1$. Therefore $F_{\Xi_p}=F_{\Xi_1}$ for all $p$, and $F_{\Xi_1}=V_1(\widehat w)\,\id_W$ by
Theorem~\ref{thm.GHKST}.

For an $n\times n$ matrix $M$ let $M^{(p)}$ be the matrix $M$ with its $p$-th row and its $p$-th column removed; thus
$M^{(1)}=M'$.

\begin{proposition}\label{prop.strandp}
Let $w$ be a braid word on $n$ strands with $c$ crossings and let $1\le p\le n$. Then
\[
   V^{(p)}(w)=(-1)^{n-1+c}\ \BE\Bigl[\det\bigl(1-B(x)^{(p)}\bigr)\,\det\bigl(1-\tB(y)^{(p)}\bigr)\Bigr].
\]
Consequently, if $L$ is the closure of $w$, then for every $p$
\[
   V_1(L;t_0,t_1)=(-1)^{\mu-1}\ \BE\Bigl[\det\bigl(1-B(x)^{(p)}\bigr)\,\det\bigl(1-\tB(y)^{(p)}\bigr)\Bigr].
\]
\end{proposition}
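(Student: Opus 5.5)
The plan is to repeat the proof of Theorem~\ref{thm.B} verbatim, changing only Step~3. Steps~1 and~2 of that proof do not involve the closure, so \eqref{eq.step2} holds as stated:
\[
(-1)^cK_n\,\varrho(w)\,K_n^{-1}=\Phi_n^{-1}\,\BE\bigl[\ext\bigl(B(x)\oplus\tB(y)\bigr)\bigr]\,\Phi_n .
\]
Since $K_n$ is diagonal, conjugating by it leaves the diagonal entries $\langle e_I,\varrho(w)e_I\rangle$ unchanged. The state sum $V^{(p)}(w)$ is a signed combination of these entries, so it can be read off from the right-hand side.

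The next step is to identify the states appearing in $V^{(p)}(w)$ in Fock space. By Lemma~\ref{lem.fock}(i), $\Phi_n(e_I)=u_S$ with $S\subseteq Q_p:=\{1,\dots,2n\}\smallsetminus\{2p-1,2p\}$ exactly when $m_p=1$. In that case \eqref{eq.hparity} gives
\[
\prod_{k\ne p}h_{m_k}=(-1)^{n-1}\prod_{k\ne p}(-1)^{\a(e_{m_k})+\b(e_{m_k})}=(-1)^{n-1}(-1)^{|S|}.
\]
Hence
\[
V^{(p)}(w)=(-1)^{n-1+c}\sum_{S\subseteq Q_p}(-1)^{|S|}\,\BE\bigl[\langle u_S,\ext(\BB)u_S\rangle\bigr],
\qquad \BB=B(x)\oplus\tB(y).
\]
Applying Lemma~\ref{lem.compress} and the linearity of $\BE$, this becomes $(-1)^{n-1+c}\,\BE\bigl[\det(1-\BB[Q_p|Q_p])\bigr]$.

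It remains to split the compressed matrix. Removing the basis vectors $a_p$ and $b_p$ gives $\BB[Q_p|Q_p]=B(x)^{(p)}\oplus\tB(y)^{(p)}$, acting on $\spn(a_k,b_k;\,k\ne p)$, because $\BB$ preserves the $a$- and $b$-subspaces separately. As in \eqref{eq.detsum}, the determinant therefore factors as $\det(1-B(x)^{(p)})\det(1-\tB(y)^{(p)})$. This proves the first formula.

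The second formula follows by combining the first with Theorem~\ref{thm.GHKSTp}, which gives $V^{(p)}(w)=V_1(\widehat w)$, and with the sign identity $(-1)^{n-1+c}=(-1)^{\mu-1}$ from \eqref{eq.VV1}.

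The only point needing care is the sign bookkeeping in the ordering of the basis $(a_1,b_1,\dots,a_n,b_n)$. The identity $\langle u_S,\ext(M)u_S\rangle=\det M[S|S]$ holds for any subset $S$ of the ordered basis, and restricting to $Q_p$ preserves the induced order. So Lemma~\ref{lem.compress} applies with no extra sign, and the argument involves no real obstacle.
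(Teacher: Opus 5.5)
Your proposal is correct and follows the paper's proof essentially line by line. You reuse Steps~1 and~2 of Theorem~\ref{thm.B}, replace $Q$ by $Q_p$ in Step~3, apply Lemma~\ref{lem.compress} and the block splitting, and conclude with Theorem~\ref{thm.GHKSTp} and \eqref{eq.VV1}; the only cosmetic difference is that Lemma~\ref{lem.fock}(i) is stated for $m_1=1$, so its $m_p=1$ analogue is really read off directly from \eqref{eq.Phi}, as the paper does.
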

\begin{proof}
Steps~1 and~2 of the proof of Theorem~\ref{thm.B} do not involve the first strand, and they give \eqref{eq.step2}.
In Step~3 we replace the set $Q$ by $Q_p=\{1,\dots,2n\}\smallsetminus\{2p-1,2p\}$, the set of the indices of the
basis vectors of $U_n$ other than $a_p$ and $b_p$. By \eqref{eq.Phi}, the vectors $e_I$ with $m_p=1$ correspond
under $\Phi_n$ to the $u_S$ with $S\subseteq Q_p$, and $\prod_{k\ne p}h_{m_k}=(-1)^{n-1}(-1)^{|S|}$ by
\eqref{eq.hparity}. Conjugation by the diagonal operator $K_n$ does not change diagonal matrix entries. Hence, by
\eqref{eq.step2}, Lemma~\ref{lem.compress} and the linearity of $\BE$,
\[
   V^{(p)}(w)=(-1)^{n-1+c}\sum_{S\subseteq Q_p}(-1)^{|S|}\,\BE\bigl[\langle u_S,\ext(\BB)u_S\rangle\bigr]
       =(-1)^{n-1+c}\,\BE\bigl[\det\bigl(1-\BB[Q_p|Q_p]\bigr)\bigr],
\]
where $\BB=B(x)\oplus\tB(y)$. The matrix $\BB[Q_p|Q_p]$ is the endomorphism of the span of the $a_k,b_k$ with $k\ne p$
which acts by $B(x)^{(p)}$ on the $a_k$ and by $\tB(y)^{(p)}$ on the $b_k$, so
$\det(1-\BB[Q_p|Q_p])=\det(1-B(x)^{(p)})\det(1-\tB(y)^{(p)})$. The last statement follows from
Theorem~\ref{thm.GHKSTp} and from $(-1)^{n-1+c}=(-1)^{\mu-1}$, see \eqref{eq.VV1}.
\end{proof}

\begin{proposition}\label{prop.braidp}
Let $w$ be a braid word in which every position is moved by at least one letter, let $L$ be its closure and $D_w$
its closure diagram. Then $V_1(L;t_0,t_1)=\Upsilon(D_w,\hate)$ for every edge $\hate$ of $D_w$.
\end{proposition}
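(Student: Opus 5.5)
The plan is to combine Proposition~\ref{prop.strandp}, which allows the braid to be cut along any position $p$, with the node-merging argument from the proof of Proposition~\ref{prop.edge}. Fix an edge $\hate$ of $D_w$. Every edge is a chain of nodes $(p,j),(p,j+1),\dots,(p,j+m)$ at a fixed position $p$. Choose a node $(p,j_0)$ in the chain of $\hate$. By cyclically conjugating the word we may take $j_0=1$: replace $w$ by $w^{\circ}=s_{j_0}\cdots s_cs_1\cdots s_{j_0-1}$. Its closure is the same link, and its closure diagram $D_{w^\circ}$ has the same combinatorial data as $D_w$ up to a relabelling of the crossings (the diagrams are isotopic on $S^2$ by sliding the letters around the closure). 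The functional $\BE$ and the edge matrices are compatible with this relabelling. So $\Upsilon(D_w,\hate)=\Upsilon(D_{w^\circ},\hate)$, and it suffices to treat the case where $\hate$ contains the node $(p,1)$ for some $p$.

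In that case, I would redo Step~1 of the proof of Proposition~\ref{prop.edge} with the node $(p,1)$ in place of $(1,1)$. Let $\BT^\circ$ be $\BT(x)$ with the row and column of $(p,1)$ set to zero. Then $\det((1-\BT(x))_{\widehat{(p,1)}})=\det(1-\BT^\circ)$. Lemma~\ref{lem.cyclic} applies with $\Pi_p=\diag(1,\dots,0,\dots,1)$ (zero in slot $p$) inserted at level $1$, and gives $\det(1-\Pi_pB(x)\Pi_p)=\det(1-B(x)^{(p)})$. Step~2, the successive merging of passive nodes by Lemma~\ref{lem.merge}, is unchanged. Its last clause says that deleting any node of a chain amounts to deleting the merged edge, so we get
\[
\det\bigl((1-T^a(x))_{\hate}\bigr)=\det\bigl(1-B(x)^{(p)}\bigr),
\]
and likewise for $\widetilde\BT(y)$ and $\tB(y)$. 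Proposition~\ref{prop.strandp} then yields $V_1(L)=\Upsilon(D_w,\hate)$.

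The main obstacle is the merging step when the deleted node $(p,1)$ lies in the middle of a chain rather than at its first node. Lemma~\ref{lem.merge} covers $d\in\{\xi,\zeta\}$ only for a single merge, so I would proceed in two stages. First I would apply merges that do not involve $d$. Then I would merge $d$ with its neighbours one at a time, each time using the clause $\det((1-Z)_{\hat d})=\det((1-Z^\flat)_{\hat\xi})$, so that the deleted index is always carried to the surviving node of the chain. This needs to be checked carefully, in particular the sub-case where $d$ is the active last node of its chain.

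I would also check the cyclic-rotation claim. It should be immediate from the definitions, since $B_{w^\circ}(x)$ is conjugate to $B_w(x)$ only after the deletion step. This is why the argument goes through the edge form, where the ordering of crossings plays no role, rather than through the matrices $B$.
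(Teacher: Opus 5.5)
Your proposal is correct and is essentially the paper's own proof: rotate the word so that $\hate$ contains a node $(p,1)$, redo Step~1 of Proposition~\ref{prop.edge} with $\Pi_p$ in place of $\Pi$, keep the merging of Step~2, and conclude with Proposition~\ref{prop.strandp}. The obstacle you flag is not a real one. At each single merge, the last clause of Lemma~\ref{lem.merge} (the case $d\in\{\xi,\zeta\}$) already carries the deleted index to the surviving node, so the successive merges work in the order used in Proposition~\ref{prop.edge}, including when $d$ is the active last node of its chain.
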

\begin{proof}
The edge $\hate$ contains a node $(p,j)$. The word $s_js_{j+1}\cdots s_cs_1\cdots s_{j-1}$ has the same closure diagram
as $w$, with the same edges and with the crossings renumbered, and the node $(p,j)$ of $w$ is the node $(p,1)$ of
this word. Since $\Upsilon(D_w,\hate)$ does not depend on the numbering of the crossings, we may assume that $j=1$.

We repeat the proof of Proposition~\ref{prop.edge} with the node $(p,1)$ in place of the node $(1,1)$. In Step~1
the matrix $\Pi$ is replaced by the diagonal matrix $\Pi_p$ with the entry $0$ in the position $p$ and the entries
$1$ elsewhere, and one obtains
\[
   \det\bigl((1-\BT(x))_{\widehat{(p,1)}}\bigr)=\det\bigl(1-\Pi_p\,B(x)\,\Pi_p\bigr)=\det\bigl(1-B(x)^{(p)}\bigr).
\]
Step~2 is unchanged: the passive nodes are merged by Lemma~\ref{lem.merge}, and the deleted node $(p,1)$ becomes the
deleted edge $\hate$. Hence $P_{D_w,\hate}(x)=\det(1-B(x)^{(p)})$, and in the same way
$\tP_{D_w,\hate}(y)=\det(1-\tB(y)^{(p)})$. The claim now follows from Proposition~\ref{prop.strandp}.
\end{proof}

\subsection{Reidemeister moves of type II}\label{ssec.RII}
We first record the elimination of a node from a determinant of the form $\det(1-Z)$; Lemma~\ref{lem.merge} is a
special case of it.

\begin{lemma}[elimination of a node]\label{lem.elim}
Let $Z$ be a square matrix with rows and columns indexed by a finite set $\CN$, with entries in a commutative ring,
and let $\zeta\in\CN$ be such that $Z[\zeta,\zeta]=0$. Let $Z^\natural$ be the matrix indexed by
$\CN\smallsetminus\{\zeta\}$ with
\[
   Z^\natural[k,l]=Z[k,l]+Z[k,\zeta]\,Z[\zeta,l].
\]
Then $\det(1-Z)=\det(1-Z^\natural)$, and $\det\bigl((1-Z)_{\hat d}\bigr)=\det\bigl((1-Z^\natural)_{\hat d}\bigr)$ for every
$d\in\CN$ different from $\zeta$.
\end{lemma}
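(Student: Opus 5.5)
This is a Schur complement computation, modelled on the proof of Lemma~\ref{lem.merge}: the row of $1-Z$ at $\zeta$ is used to clear the column at $\zeta$, and the determinant is then expanded along that column.

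Since $Z[\zeta,\zeta]=0$, the diagonal entry $(1-Z)[\zeta,\zeta]$ equals $1$. For each $k\ne\zeta$, subtract $-Z[k,\zeta]$ times row $\zeta$ from row $k$; that is, add $Z[k,\zeta]$ times row $\zeta$ of $1-Z$ to row $k$. Row $\zeta$ of $1-Z$ has entry $1$ in column $\zeta$ and entries $-Z[\zeta,l]$ in the columns $l\ne\zeta$. After the operation, the entry in position $(k,\zeta)$ is $-Z[k,\zeta]+Z[k,\zeta]\cdot1=0$. The entry in position $(k,l)$ with $l\ne\zeta$ is
\[
   \d_{kl}-Z[k,l]-Z[k,\zeta]Z[\zeta,l]=(1-Z^\natural)[k,l].
\]
These row operations do not change the determinant. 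Column $\zeta$ is now the $\zeta$-th unit column, so expanding along it gives $\det(1-Z)=\det(1-Z^\natural)$. Equivalently, this is the block identity $\det\begin{psmallmatrix}A&B\\D&1\end{psmallmatrix}=\det(A-BD)$ already used in Lemma~\ref{lem.cyclic}, with the block $1$ sitting at $\zeta$.

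For the minor, fix $d\ne\zeta$ and delete row and column $d$ from $1-Z$. Row and column $\zeta$ remain, and the entry at $(\zeta,\zeta)$ is still $1$. The same row operations, now applied only to the rows $k\notin\{d,\zeta\}$, produce exactly $(1-Z^\natural)_{\hat d}$ after expanding along column $\zeta$. The operations only use the row $\zeta$, which is still present, and the deletion of row and column $d$ commutes with both the row operations and the formula for $Z^\natural$, because that formula involves only the indices $k$, $l$ and $\zeta$. Hence $\det((1-Z)_{\hat d})=\det((1-Z^\natural)_{\hat d})$.

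There is no real obstacle. The only point needing care is that the diagonal entry at $\zeta$ is exactly $1$, so no division is required and the argument works over any commutative ring. As a check, Lemma~\ref{lem.merge} is recovered when column $\xi$ and row $\zeta$ have the single entry $Z[\zeta,\xi]=1$. Then $Z^\natural[k,\xi]=Z[k,\xi]+Z[k,\zeta]=Z[k,\zeta]$ for $k\ne\zeta$, which agrees with $Z^\flat$, and $Z^\natural[k,l]=Z[k,l]$ for all $l\ne\xi$.
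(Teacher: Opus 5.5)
Your proof is correct and is essentially the paper's argument. The paper also orders $\CN$ with $\zeta$ last and applies the Schur complement identity $\det\begin{psmallmatrix}A&B\\D&1\end{psmallmatrix}=\det(A-BD)$; you clear column $\zeta$ by row operations rather than row $\zeta$ by column operations, which is immaterial. For the minors, the paper likewise observes that deleting row and column $d$ commutes with the passage $Z\mapsto Z^\natural$.
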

\begin{proof}
Order $\CN$ so that $\zeta$ comes last. Then $1-Z=\begin{psmallmatrix}A&B\\D&1\end{psmallmatrix}$, where $A$ is the
matrix $1-Z$ restricted to $\CN\smallsetminus\{\zeta\}$, $B$ is the column with the entries $-Z[k,\zeta]$ and $D$ is
the row with the entries $-Z[\zeta,l]$. By the identity
$\det\begin{psmallmatrix}A&B\\D&1\end{psmallmatrix}=\det(A-BD)$ used in the proof of Lemma~\ref{lem.cyclic}, and since
$A-BD=1-Z^\natural$, we get the first claim. The second claim is the first one for the matrix obtained from $Z$ by
removing the row and the column of $d$, because this operation commutes with the passage from $Z$ to $Z^\natural$.
\end{proof}

Next we describe a Reidemeister move of type~II in terms of the combinatorial data. Let $D$ be a diagram with $c$
crossings, let $A\ne B$ be two edges of $D$ and let $\ve\in\{1,-1\}$. Consider a diagram $D^+$ with $c+2$ crossings
whose edges are the edges of $D$ other than $A$ and $B$ and six further edges $A_1,A_2,A_3,B_1,B_2,B_3$, with the
following combinatorial data. At the crossings $1,\dots,c$ the data are those of $D$, except that $A$ is replaced
by $A_1$ where it is an outgoing edge and by $A_3$ where it is an incoming edge, and likewise for $B$. The two new
crossings have the signs $\ve_{c+1}=\ve$ and $\ve_{c+2}=-\ve$, and either
\begin{align*}
  (o,u,o',u')_{c+1}&=(A_1,B_1,A_2,B_2), & (o,u,o',u')_{c+2}&=(A_2,B_2,A_3,B_3) &&\text{(parallel case), or}\\
  (o,u,o',u')_{c+1}&=(A_1,B_2,A_2,B_3), & (o,u,o',u')_{c+2}&=(A_2,B_1,A_3,B_2) &&\text{(antiparallel case).}
\end{align*}
We then say that $D^+$ is obtained from $D$ by a \emph{move of type~II in which $A$ passes over $B$}. This is what
happens when a piece of the edge $A$ is pushed over a piece of the edge $B$ across a face of $D$ which is adjacent
to both: each of the two edges is cut into three pieces by the two new crossings, the edge $A$ is the over-strand at
both of them, and they have opposite signs; we number them in the order in which they are met along $A$, and they
are met in the same order along $B$ if the two pieces are parallel, and in the opposite order if they are
antiparallel. The link is not changed by such a move. (In this situation the sign $\ve$ is determined by the
picture. We do not need to know it: Lemma~\ref{lem.RII} holds for both values of $\ve$.)

\begin{lemma}\label{lem.RII}
Let $D^+$ be obtained from $D$ by a move of type~II in which $A$ passes over $B$. Let $\hate$ be an edge of $D$ and
let $\hate^+$ be the edge of $D^+$ defined by $\hate^+=\hate$ if $\hate\notin\{A,B\}$, $\hate^+=A_1$ if $\hate=A$ and
$\hate^+=B_1$ if $\hate=B$. Then
\[
    \Upsilon(D^+,\hate^+)=\Upsilon(D,\hate).
\]
\end{lemma}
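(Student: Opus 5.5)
The plan is to eliminate the four new interior edges $A_1,A_2,B_1,B_2$ (or the appropriate ones) from the edge matrices of $D^+$ by Lemma~\ref{lem.elim}. The new variables $x_{c+1},x_{c+2},y_{c+1},y_{c+2}$ are then handled by applying $\BE$ to the two new crossings first, using the factorisation \eqref{eq.Emult}. Since $\mu$ is unchanged by the move, it suffices to show
\[
\BE_{c+1,c+2}\bigl[P_{D^+,\hate^+}(x)\,\tP_{D^+,\hate^+}(y)\bigr]=P_{D,\hate}(x)\,\tP_{D,\hate}(y),
\]
where $\BE_{c+1,c+2}$ denotes the partial functional in the four new variables. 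The remaining variables are left untouched, and applying $\BE_D$ afterwards gives the claim.

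\textbf{Step 1 (flavor $a$).} In $T^a_{D^+}$ the row of $A_2$ has the single entry $1$ in column $A_1$, and the row of $A_3$ has the single entry $1$ in column $A_2$, since $A$ is the over-strand at both crossings. The column of $B_1$ (parallel case) has a single entry $x_{c+1}$ in row $B_2$. I would eliminate the intermediate nodes one at a time by Lemma~\ref{lem.elim}, which is allowed because all diagonal entries vanish, and then merge chains via Lemma~\ref{lem.merge}. The result is a matrix indexed by the edges of $D$ (with $A_1\!\cdots\! A_3$ identified to $A$ and similarly for $B$). It agrees with $T^a_D$ except for the $B$-row. There the composite of the two local Burau blocks gives the entry $x_{c+1}x_{c+2}$ in column $B$ and an entry in column $A$, namely $(1-x_{c+1})x_{c+2}+(1-x_{c+2})$ in the parallel case (an analogous expression in the antiparallel case). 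So $P_{D^+,\hate^+}$ is an affine expression in the product $x_{c+1}x_{c+2}$ and in these modified entries. Symmetrically, the $b$-flavor gives modified entries in column $A$ of $T^b$, with $y_{c+1}y_{c+2}$ appearing on the $A\to A$ entry.

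\textbf{Step 2 (the cut edge).} If $\hate\notin\{A,B\}$, the second part of Lemma~\ref{lem.elim} lets us delete $\hate$ throughout. If $\hate=A$ or $B$, we keep $\hate^+=A_1$ or $B_1$ as the surviving representative; this is exactly why $\hate^+$ was chosen as the first piece. I would use the $d\in\{\xi,\zeta\}$ clause of Lemma~\ref{lem.merge} for the merging steps.

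\textbf{Step 3 (expectation).} Write the reduced determinants, using Lemma~\ref{lem.rankone}, as $P=P_0+\alpha\,\delta_x$ and $\tP=\tP_0+\beta\,\delta_y$. Here $\delta_x,\delta_y$ are the deviations of the modified entries from those of $D$, and $\alpha,\beta$ are cofactors independent of the new variables. Applying $\BE_{t_0^{\ve},t_1^{\ve}}\otimes\BE_{t_0^{-\ve},t_1^{-\ve}}$ at the two new crossings, I need three facts. First, $\BE[x_{c+1}x_{c+2}]=1$, which holds since $\bx_{c+1}\bx_{c+2}=1$. Second, $\BE[\delta_x]=0$ and $\BE[\delta_y]=0$, which should follow similarly. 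Third, $\BE[\delta_x\delta_y]=0$. This cross term is the crux, and it is where the rule $x_jy_j\mapsto 1$ matters: a product like $x_{c+1}y_{c+1}$ appearing in $\delta_x\delta_y$ evaluates to $1$ rather than $t_0^{\ve}t_1^{\ve}$. I expect this step, including getting the signs and the antiparallel case right, to be the main obstacle.

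If the $a$ and $b$ modifications turn out to sit in different rows and columns (row $B$ for $a$, column $A$ for $b$), the cross term may not factor nicely. In that case the fallback is to expand $\delta_x\delta_y$ monomially and check, using the table of Proposition~\ref{prop.trees}(ii), that the local weights $W_{c+1}W_{c+2}$ summed over the internal choices reproduce the trivial transfer. This amounts to the invariance of the pair-of-trees state sum under the move, checked on the $2\times2$ choices at the two crossings.
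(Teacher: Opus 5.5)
\textbf{Verdict: the approach is right, but the decisive step is missing.} Your outline is the paper's: eliminate interior nodes with Lemma~\ref{lem.elim}, use the rank-one structure, then evaluate $\BE$ at the two new crossings. But the argument stops at the step you call the crux, and you even expect it might fail.

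\textbf{The missing observation.} The expression you wrote, $(1-x_{c+1})x_{c+2}+(1-x_{c+2})$, simplifies to $1-x_{c+1}x_{c+2}$. The antiparallel case gives $(1-x_{c+1})+x_{c+1}(1-x_{c+2})$, which is the same. So the bigon acts as a single crossing. After eliminating only $A_2$ and $B_2$, the rows of $A_3,B_3$ in $T^a_{D^+}$ become $e_{A_1}^{\T}$ and $(1-X)e_{A_1}^{\T}+Xe_{B_1}^{\T}$, with $X=x_{c+1}x_{c+2}$. In $T^b_{D^+}$ they become $Y e_{A_1}^{\T}$ and $(1-Y)e_{A_1}^{\T}+e_{B_1}^{\T}$, with $Y=y_{c+1}y_{c+2}$. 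By Lemma~\ref{lem.rankone} this gives $P_{D^+,\hate^+}=p_0+Xp_1$ and $\tP_{D^+,\hate^+}=q_0+Yq_1$, where $p_0,p_1,q_0,q_1$ involve none of the four new variables. Setting $X=1$ (resp.\ $Y=1$) and then eliminating $A_3,B_3$ returns exactly $T^a_D$ (resp.\ $T^b_D$). Hence $p_0+p_1=P_{D,\hate}$ and $q_0+q_1=\tP_{D,\hate}$.

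\textbf{Why the crux is then immediate.} Since $\ve_{c+2}=-\ve_{c+1}$, multiplicativity \eqref{eq.Emult} gives $\BE_{D^+}[x_{c+1}x_{c+2}m]=\BE_{D^+}[y_{c+1}y_{c+2}m]=\BE_{D^+}[x_{c+1}y_{c+1}x_{c+2}y_{c+2}m]=\BE_D[m]$ for every monomial $m$ in the old variables. Therefore $\BE_{D^+}[(p_0+Xp_1)(q_0+Yq_1)]=\BE_D[(p_0+p_1)(q_0+q_1)]$. Your worry that the cross term might not factor, because the $a$- and $b$-modifications sit in different rows or columns, is unfounded. Once the dependence is through $X$ and $Y$ alone, with coefficients free of the new variables, matrix positions play no role, and the tree-expansion fallback is unnecessary. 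As written, however, $\BE[\delta_x]=\BE[\delta_y]=\BE[\delta_x\delta_y]=0$ is left unverified.

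\textbf{Two smaller corrections.}
\begin{enumerate}
\item Lemma~\ref{lem.merge} does not apply here. Column $A_1$ has a second non-zero entry $1-x_{c+1}$ (in row $B_2$, resp.\ $B_3$), and column $A_2$ likewise has $1-x_{c+2}$. So you cannot ``merge chains'' or invoke its $d\in\{\xi,\zeta\}$ clause for the cut edge. You do not need it: use only Lemma~\ref{lem.elim}, on $A_2,B_2$, and on $A_3,B_3$ at $X=1$. Since $\hate^+$ is an edge of $D$ or lies in $\{A_1,B_1\}$, the cut node is never eliminated.
\item The claim that the reduced matrix indexed by the edges of $D$ ``agrees with $T^a_D$ except for the $B$-row'' is inaccurate. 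With $A_1,B_1$ surviving, eliminating $A_3,B_3$ at general $X$ changes the columns $A$ and $B$ of $T^a_D$. The row $(1-X)e_{A_1}^{\T}+Xe_{B_1}^{\T}$ lives in the intermediate matrix indexed by the edges of $D$ together with $A_3,B_3$. Only the value at $X=1$ matters for the argument.
\end{enumerate}
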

\begin{proof}
We write $A,B$ for the edges $A_1,B_1$ of $D^+$, so that the edges of $D^+$ are the edges of $D$ and the four edges
$A_2,B_2,A_3,B_3$, and $\hate^+=\hate$. We write $x',x''$ for $x_{c+1},x_{c+2}$ and $y',y''$ for $y_{c+1},y_{c+2}$.

\smallskip\emph{The matrix $Z=T^a_{D^+}(x)$.} Let $k$ be an edge of $D$. The row $k$ of $Z$ is the row $k$ of
$T^a_D(x)$ in which the entries of the columns $A$ and $B$ have been moved to the columns $A_3$ and $B_3$; in
particular its entries in the columns $A,B,A_2,B_2$ are zero. The rows of the four other edges are, by the
definition of the edge matrices,
\begin{align*}
  Z[A_2,\cdot\,]&=e_A^{\T}, & Z[B_2,\cdot\,]&=(1-x')\,e_A^{\T}+x'e_B^{\T},\\
  Z[A_3,\cdot\,]&=e_{A_2}^{\T}, & Z[B_3,\cdot\,]&=(1-x'')\,e_{A_2}^{\T}+x''e_{B_2}^{\T}
\intertext{in the parallel case, and}
  Z[A_2,\cdot\,]&=e_A^{\T}, & Z[B_3,\cdot\,]&=(1-x')\,e_A^{\T}+x'e_{B_2}^{\T},\\
  Z[A_3,\cdot\,]&=e_{A_2}^{\T}, & Z[B_2,\cdot\,]&=(1-x'')\,e_{A_2}^{\T}+x''e_B^{\T}
\end{align*}
in the antiparallel case. We eliminate the node $A_2$ and then the node $B_2$ by Lemma~\ref{lem.elim}; this is
allowed because $Z[A_2,A_2]=Z[B_2,B_2]=Z[A_2,B_2]=0$. The rows of the edges of $D$ are not changed. In both cases the
rows $A_3$ and $B_3$ become
\begin{equation}\label{eq.bigon}
   e_A^{\T}\qquad\text{and}\qquad(1-X)\,e_A^{\T}+X\,e_B^{\T},\qquad\text{where }X=x'x'' ;
\end{equation}
for instance, in the parallel case the row $B_3$ becomes $(1-x'')e_A^{\T}+x''\bigl((1-x')e_A^{\T}+x'e_B^{\T}\bigr)$. So the
two new crossings act together as a single crossing with the parameter $x'x''$. Let $Z_1$ be the resulting matrix,
indexed by the edges of $D$ and by $A_3,B_3$. By Lemma~\ref{lem.elim}, $P_{D^+,\hate}(x)=\det((1-Z_1)_{\hate})$. The
matrix $Z_1$ is the sum of a matrix which does not involve $x',x''$ and of $X\,e_{B_3}(e_B-e_A)^{\T}$. So, by
Lemma~\ref{lem.rankone}, if $X$ is replaced by an indeterminate, then $\det((1-Z_1)_{\hate})=p_0+X\,p_1$ with
polynomials $p_0,p_1$ in $x_1,\dots,x_c$, and therefore
\[
   P_{D^+,\hate}(x)=p_0+x'x''\,p_1 .
\]
To compute $p_0+p_1$ we put $X=1$. The rows \eqref{eq.bigon} are then $e_A^{\T}$ and $e_B^{\T}$; in particular the
entries of $Z_1$ in the positions $(A_3,A_3)$, $(B_3,B_3)$ and $(A_3,B_3)$ are zero, so that we can eliminate the
node $A_3$ and then the node $B_3$ by Lemma~\ref{lem.elim}. This adds the columns $A_3$ and $B_3$ to the columns
$A$ and $B$ and removes the rows and the columns of $A_3$ and $B_3$, and we recover the matrix $T^a_D(x)$.
Hence $p_0+p_1=P_{D,\hate}(x)$.

\smallskip\emph{The matrix $T^b_{D^+}(y)$.} The same computation applies. The rows of the new edges are now
$y'e_A^{\T}$, $(1-y')e_A^{\T}+e_B^{\T}$, $y''e_{A_2}^{\T}$, $(1-y'')e_{A_2}^{\T}+e_{B_2}^{\T}$ for $A_2,B_2,A_3,B_3$ in the parallel
case, and $y'e_A^{\T}$, $(1-y')e_A^{\T}+e_{B_2}^{\T}$, $y''e_{A_2}^{\T}$, $(1-y'')e_{A_2}^{\T}+e_B^{\T}$ for $A_2,B_3,A_3,B_2$ in the
antiparallel case. After the elimination of $A_2$ and $B_2$ the rows $A_3$ and $B_3$ are, in both cases,
\[
   Y\,e_A^{\T}\qquad\text{and}\qquad(1-Y)\,e_A^{\T}+e_B^{\T},\qquad\text{where }Y=y'y'' .
\]
The resulting matrix depends on $y',y''$ through the term $Y\,(e_{A_3}-e_{B_3})e_A^{\T}$ of rank one, and for $Y=1$ the
rows $A_3$ and $B_3$ are again $e_A^{\T}$ and $e_B^{\T}$, so that the elimination of $A_3$ and $B_3$ gives $T^b_D(y)$. Hence $\tP_{D^+,\hate}(y)=q_0+y'y''\,q_1$ with polynomials $q_0,q_1$ in
$y_1,\dots,y_c$ such that $q_0+q_1=\tP_{D,\hate}(y)$.

\smallskip\emph{The map $\BE$.} Let $m$ be a monomial in $x_1,\dots,x_c,y_1,\dots,y_c$ of degree $\le1$ in each
variable. By the definition \eqref{eq.E} and since $\ve_{c+2}=-\ve_{c+1}$,
\begin{gather*}
   \BE_{D^+}[x'x''m]=t_0^{\ve}t_0^{-\ve}\,\BE_D[m]=\BE_D[m],\qquad \BE_{D^+}[y'y''m]=t_1^{\ve}t_1^{-\ve}\,\BE_D[m]=\BE_D[m],\\
   \BE_{D^+}[x'x''y'y''m]=\BE_D[m].
\end{gather*}
Therefore
$\BE_{D^+}\bigl[(p_0+x'x''p_1)(q_0+y'y''q_1)\bigr]=\BE_D\bigl[(p_0+p_1)(q_0+q_1)\bigr]=\BE_D\bigl[P_{D,\hate}\,\tP_{D,\hate}\bigr]$.

\smallskip\emph{The number of components.} By the combinatorial data of the two new crossings, the edge $A_1$ is
followed by $A_2$ and then by $A_3$ along its strand, and $B_1$ is followed by $B_2$ and then by $B_3$, in the
parallel case and in the antiparallel case. So the cycles of the permutation of Section~\ref{ssec.diagdef} for
$D^+$ are those for $D$ in which $A$ and $B$ have been replaced by $A_1,A_2,A_3$ and $B_1,B_2,B_3$, and the number
$\mu$ is the same for $D^+$ and for $D$. Hence $\Upsilon(D^+,\hate^+)=\Upsilon(D,\hate)$.
\end{proof}

\begin{remark}\label{rem.RII}
The proof shows why $\BE$ is compatible with the moves of type~II: the two crossings of a bigon act as one
crossing with the parameters $x'x''$ and $y'y''$, this crossing acts trivially when both parameters are equal to
$1$, and $\BE$ sends $x'x''$, $y'y''$ and their product to $1$ because the two crossings have opposite signs. The
lemma does not use the fact that the diagrams are drawn on the sphere. Note that it says nothing about the cut
edges $A_2,B_2,A_3,B_3$ of $D^+$; these are covered by Theorem~\ref{thm.diagrams} below.
\end{remark}

\subsection{The edge formula for an arbitrary diagram}\label{ssec.thmdiagrams}
The second result we use is the following form of Alexander's theorem.

\begin{theorem}[Yamada \cite{Yamada}, Vogel \cite{Vogel}]\label{thm.vogel}
Let $D$ be a diagram. There is a finite sequence $D=D_0,D_1,\dots,D_m$ of diagrams of the same link such that each
$D_{i+1}$ is obtained from $D_i$ by a Reidemeister move of type~II between two distinct edges of $D_i$, hence by a
move of type~II in the sense of Section~\ref{ssec.RII}, and such that $D_m$ is, up to an orientation-preserving
homeomorphism of $S^2$, the closure diagram $D_w$ of a braid word $w$.
\end{theorem}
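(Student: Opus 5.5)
The plan is to follow Vogel's algorithm, phrased entirely in terms of the Seifert circles of $D$. First I would smooth every crossing of $D$ according to the orientation, which gives a collection of $s(D)$ disjoint oriented Seifert circles on $S^2$. Each face of $D$ determines a face of this system of circles. I would call a face \emph{defect} if two arcs on its boundary belong to two \emph{distinct} Seifert circles that are \emph{incoherently} oriented, meaning that with respect to the face they run as two circles on the sphere that do not bound a common annulus with parallel orientations.

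The first key step is a recognition lemma for braids. If $D$ is connected and has no defect face, then the Seifert circles are pairwise coherent. On $S^2$ this means they are nested concentric circles, all oriented the same way around a common axis. Every crossing joins two adjacent circles of the nest, so after an orientation-preserving homeomorphism of $S^2$, $D$ is the closure diagram $D_w$ of a braid word $w$ whose strands are the Seifert circles. Connectedness of $D$ gives the extra condition that every position is moved by some letter. Otherwise some Seifert circle would carry no crossing and would be a separate component of $D$, or $D$ would have no crossings, which the convention $c\ge1$ excludes.

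The second step is the move itself. If a defect face $F$ exists, choose on $\partial F$ two edges $A$ and $B$ lying on the two incoherent Seifert circles. They are distinct edges because they lie on different Seifert circles. Push a finger of $A$ over $B$ across $F$. This is a Reidemeister move of type~II between two distinct edges, so it is a move of type~II in the sense of Section~\ref{ssec.RII}. The new diagram is again connected and represents the same link. Locally the effect on Seifert circles is as follows: the two incoherent circles are cut and reglued at the two new crossings into two new circles, so $s(D)$ is unchanged.

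The third step, which I expect to be the main obstacle, is termination. Following Vogel, I would attach to $D$ a complexity such as the \emph{height} $h(D)=s(D)-\#\{\text{coherent nesting levels}\}$, or, in Yamada's version, the number of incoherent pairs of adjacent Seifert circles. Since $s(D)$ is fixed by the move, the task is to show that this complexity strictly decreases, and it is bounded below by $0$.

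The difficulty is that a move can create new defect faces elsewhere. So the argument must use a well-chosen defect face, namely an innermost or extremal one with respect to the nesting tree of Seifert circles (the Seifert graph, whose vertices are the circles). For such a face one checks that the regluing turns an incoherent adjacent pair into a coherent one without making other pairs incoherent. I would verify this by a local case analysis on the Seifert graph, which is a tree after collapsing crossings, and if the analysis becomes lengthy I would fall back on citing \cite{Vogel} for the monotonicity.

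Iterating the move until no defect face remains, the recognition lemma then yields $D_m\cong D_w$.
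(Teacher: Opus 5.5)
The overall route matches the paper's: Vogel moves across a face that meets two distinct incoherent Seifert circles, invariance of $s(D)$ and of connectedness, and recognition of the final diagram as a closed braid. The gap is termination, which you leave open and which would not go through as you sketch it.

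You think the face must be chosen carefully, and you offer two measures: a loosely defined ``height'', and the number of incoherent pairs of \emph{adjacent} circles. The second measure is not monotone, and your local claim that the regluing makes no other pair incoherent fails for it. Say $A\subset C_1$, $B\subset C_2$, and the face lies to the right of $A$ and of $B$ (the other case is symmetric). The move replaces $C_1,C_2$ by two circles:
\begin{itemize}
\item a small circle $C_s$, with an empty disc on its left;
\item a circle $C_{12}$, with the rest of the old region on its right, and on its left a disc containing $C_s$ and everything that lay beyond $C_1$ or beyond $C_2$ (on the side away from the face).
\end{itemize}
Hence the complementary regions just beyond $C_1$ and just beyond $C_2$ merge into one region. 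Take a circle $F\ne C_1$ bounding the first region and a circle $G\ne C_2$ bounding the second, both having their region on the same side. They become adjacent and form a new incoherent adjacent pair. With enough such circles, your count goes up.

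The measure that works, and the one the paper uses (citing Vogel and Traczyk), is the number of \emph{all} incoherent pairs. Every move lowers it by exactly one, whichever face is used. For circles $C\ne C'$, let $\lambda(C,C')\in\{\mathrm L,\mathrm R\}$ be the side of $C$ containing $C'$. The annulus between $C$ and $C'$ lies on these sides, so $C,C'$ are coherent if and only if $\lambda(C,C')\ne\lambda(C',C)$. Now compare the pairs before and after the move.
\begin{itemize}
\item Pairs avoiding $C_1,C_2$ are unchanged.
\item For any other circle $E$, the four circles $C_1,C_2,C_{12},C_s$ lie on one side of $E$, so $\lambda(E,\cdot)$ takes the same value on all four.
\item If $E$ lies beyond $C_1$ or beyond $C_2$, then $\lambda(C_1,E)\ne\lambda(C_2,E)$ and $\lambda(C_{12},E)=\mathrm L\ne\mathrm R=\lambda(C_s,E)$. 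So exactly one old pair and exactly one new pair containing $E$ is incoherent.
\item Otherwise $\lambda(C_1,E)=\lambda(C_2,E)=\lambda(C_{12},E)=\lambda(C_s,E)=\mathrm R$, and the two new pairs have the same status as the two old ones.
\item Finally, $(C_1,C_2)$ was incoherent, whereas $\lambda(C_{12},C_s)=\mathrm L\ne\mathrm R=\lambda(C_s,C_{12})$, so the new pair is coherent.
\end{itemize}

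Your recognition lemma is also only asserted, in both halves.
\begin{itemize}
\item Passing from pairwise coherence to nested, equally oriented circles needs the paper's argument. A complementary component with three boundary circles lies on the same side of two of them, which are then incoherent. So the tree of components and circles is a path, and coherence propagates the orientation along it.
\item ``No defect face implies pairwise coherent'' needs justification too. Along the tree path between two incoherent circles, some consecutive pair is incoherent. One must then show that, in a connected diagram, their common region contains a face meeting two circles on which it lies on the same side.
\end{itemize}
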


Let us recall the terms, and explain how this statement is contained in \cite{Vogel}; see also \cite{Traczyk} and
\cite[Section~2]{BirmanBrendle}. Smoothing all the crossings of a diagram $D$ in the way which is compatible with
the orientation gives a family of disjoint oriented simple closed curves on $S^2$, the \emph{Seifert circles} of
$D$. Every edge of $D$ lies on one Seifert circle. Near a crossing, the two smoothed arcs run side by side in the
same direction, and the region between them lies to the right of one of them and to the left of the other one;
since each of the two components of the complement of a simple closed curve lies on one side of it, the two arcs
belong to two distinct Seifert circles, and we say that the crossing \emph{joins} these two circles. Two disjoint
oriented simple closed curves on $S^2$ are \emph{coherently oriented} if the annulus between them lies to the left
of one of them and to the right of the other one, and \emph{incoherently oriented} otherwise.

\smallskip\emph{The algorithm.} Vogel's algorithm \cite{Vogel}, which refines the one of Yamada \cite{Yamada}, is the
following. As long as two of the Seifert circles of the diagram are incoherently oriented, there is a face of the
diagram whose boundary contains two edges $A$ and $B$ which lie on two distinct, incoherently oriented Seifert
circles, and one pushes a piece of $A$ over a piece of $B$ across this face. This is a Reidemeister move of
type~II, and a move of type~II in the sense of Section~\ref{ssec.RII}: here $A\ne B$ because the two edges lie on
distinct Seifert circles. (It belongs to the antiparallel case: near $A$ and near $B$ the face lies in the annulus
between the two circles, and this annulus is on the same side of both of them, so the face lies on the same side of
$A$ and of $B$.) Such a move does not change the number of Seifert circles, and it decreases the number of pairs of
incoherently oriented Seifert circles by one \cite{Vogel,Traczyk}. So the procedure stops, at a diagram $D_m$ whose
Seifert circles are pairwise coherently oriented. A move of type~II between two edges of a connected diagram gives a connected diagram,
so all the $D_i$ are diagrams in the sense of Section~\ref{ssec.diagdef}.

\smallskip\emph{The final diagram.} It remains to recall why $D_m$ is a closure diagram, and to compare the
conventions. Let $\Omega$ be a component of the complement of the Seifert circles of $D_m$. If three Seifert circles
lie on the boundary of $\Omega$, then two of them have $\Omega$ on the same side; the annulus between these two
contains $\Omega$, so they are incoherently oriented. Hence the boundary of every component consists of at most two
circles. Now the components and the circles are the vertices and the edges of a tree, because every circle
separates the sphere, and a tree whose vertices have at most two neighbours is a path. So the Seifert circles are
nested: they can be numbered $C_1,\dots,C_s$ in such a way that their complement consists of a disc bounded by $C_1$,
of the annuli between $C_i$ and $C_{i+1}$ for $1\le i<s$, and of a disc bounded by $C_s$. Suppose that the first disc
lies to the left of $C_1$. Then the annulus between $C_1$ and $C_2$ lies to the right of $C_1$, hence to the left of
$C_2$, because these two circles are coherently oriented; therefore the annulus between $C_2$ and $C_3$ lies to the
right of $C_2$, hence to the left of $C_3$, and so on. Thus the annulus between $C_i$ and $C_{i+1}$ lies to the right
of $C_i$ and to the left of $C_{i+1}$ for every $i$, and the last disc lies to the right of $C_s$. In the same way,
if the first disc lies to the right of $C_1$, then the last disc lies to the left of $C_s$, and we number the
circles in the opposite order. So we may and do assume that the first disc lies to the left of $C_1$. A crossing of
$D_m$ joins two distinct Seifert circles across a component of their complement, so it joins $C_i$ and $C_{i+1}$
for some $i$, across the annulus between them.

Now remove a point from the disc bounded by $C_1$ and identify the rest of $S^2$ with the plane by an
orientation-preserving homeomorphism. In the plane, $C_1,\dots,C_s$ are nested closed curves which all run clockwise,
and $C_1$ is the outermost one. This is the picture of the closure diagram of a braid on $s$ strands, drawn with
its strands running upwards and closed on the right, in which $C_i$ is the Seifert circle of the strand in the
position $i$. Thus an orientation-preserving homeomorphism maps $D_m$ to the closure diagram $D_w$ of a braid word
$w$ on $s$ strands: the letters of $w$ are the crossings of $D_m$, and a crossing which joins $C_i$ and $C_{i+1}$
becomes a crossing between the strands in the positions $i$ and $i+1$. (After a further isotopy of the plane the
circles are concentric and the crossings lie at distinct angles; the letters of $w$ are then read clockwise, that
is, in the direction of the circles, starting anywhere.) An orientation-preserving homeomorphism preserves the
over-strand, the under-strand and the sign of each crossing. Since the letter $\sigma_i$ is the
positive crossing and $\sigma_i^{-1}$ the negative one, the combinatorial data of $D_m$ are those of $D_w$, which are
given by \eqref{eq.ou}. Finally $D_m$ is connected, so every position is moved by some letter of $w$.

\begin{theorem}\label{thm.diagrams}
Let $L$ be an oriented link with $\mu$ components, let $D$ be a connected diagram of $L$ with at least one crossing,
and let $\hate$ be an edge of $D$. Then
\[
   V_1(L;t_0,t_1)=(-1)^{\mu-1}\ \BE_D\Bigl[\det\bigl((1-T^a_D(x))_{\hate}\bigr)\,\det\bigl((1-T^b_D(y))_{\hate}\bigr)\Bigr],
\]
where $\BE_D$ is the linear map with $\BE_D[x_j]=t_0^{\ve_j}$, $\BE_D[y_j]=t_1^{\ve_j}$ and $\BE_D[x_jy_j]=1$, multiplicative
over distinct crossings. In the notation \eqref{eq.Upsilon}: $V_1(L)=\Upsilon(D,\hate)$.
\end{theorem}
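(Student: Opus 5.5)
The plan is to combine Theorem~\ref{thm.vogel} with Lemma~\ref{lem.RII} and Proposition~\ref{prop.braidp}. The delicate point is the cut edge, because Lemma~\ref{lem.RII} only transports the cut edges $\hate^+\in\{\hate,A_1,B_1\}$ forward. The statement needs to hold for a fixed edge $\hate$ of the arbitrary diagram $D$. The natural strategy is therefore to run the sequence backwards: on the final closure diagram every edge is allowed, by Proposition~\ref{prop.braidp}.

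First I apply Theorem~\ref{thm.vogel} to $D$, which gives $D=D_0,D_1,\dots,D_m$ with $D_m\cong D_w$. Since $\Upsilon$ depends only on the combinatorial data and these are preserved by orientation-preserving homeomorphisms, Proposition~\ref{prop.braidp} gives $\Upsilon(D_m,e)=V_1(L)$ for \emph{every} edge $e$ of $D_m$. I then prove by downward induction on $i$ that $\Upsilon(D_i,e)=V_1(L)$ for every edge $e$ of $D_i$. For the inductive step, let $D_{i+1}=D_i^+$ be obtained by a move of type~II in which $A$ passes over $B$, and let $e$ be an edge of $D_i$. Lemma~\ref{lem.RII} gives $\Upsilon(D_i,e)=\Upsilon(D_{i+1},e^+)$, where $e^+$ is an edge of $D_{i+1}$. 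By the induction hypothesis the right side equals $V_1(L)$. The number of components $\mu$ is the same throughout, because all $D_i$ are diagrams of $L$; Lemma~\ref{lem.RII} also checks this combinatorially. Taking $i=0$ gives the claim, and unfolding \eqref{eq.Upsilon} gives the displayed formula.

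This ordering makes the main obstacle disappear, namely the cut edges $A_2,B_2,A_3,B_3$ that Lemma~\ref{lem.RII} does not cover, which Remark~\ref{rem.RII} defers to this theorem. Those edges only ever appear on the ``plus'' side of a move. There they are handled by the induction hypothesis, which holds for all edges of $D_{i+1}$, and that hypothesis rests on the all-edges statement of Proposition~\ref{prop.braidp} at the braid end.

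The remaining things to verify are minor. The $D_i$ must be connected with $c\ge1$, which Theorem~\ref{thm.vogel} provides, so that $\Upsilon$ is defined. The hypothesis of Proposition~\ref{prop.braidp} that every position is moved holds, as noted at the end of the discussion of Theorem~\ref{thm.vogel}. Finally, the renumbering of crossings under the homeomorphism $D_m\cong D_w$ does not affect $\Upsilon$. As a by-product, the argument shows that $\Upsilon(D,\hate)$ is independent of $\hate$.
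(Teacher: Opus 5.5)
Your proposal is correct and is essentially the paper's proof. The paper pushes the single cut edge forward, $\hate_{i+1}=(\hate_i)^+$, via Lemma~\ref{lem.RII} and then applies Proposition~\ref{prop.braidp} to $\hate_m$ on $D_m\cong D_w$, which is your downward induction unrolled for one edge. The edges $A_2,B_2,A_3,B_3$ pose no obstacle in either version: only the forward image $\hate^+\in\{\hate,A_1,B_1\}$ is ever needed, and Proposition~\ref{prop.braidp} covers every edge of $D_m$.
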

\begin{proof}
Let $D=D_0,\dots,D_m$ be as in Theorem~\ref{thm.vogel}. Put $\hate_0=\hate$ and let $\hate_{i+1}=(\hate_i)^+$ be the edge
of $D_{i+1}$ associated with the edge $\hate_i$ of $D_i$ in Lemma~\ref{lem.RII}. By this lemma,
$\Upsilon(D_{i+1},\hate_{i+1})=\Upsilon(D_i,\hate_i)$ for all $i$, hence $\Upsilon(D,\hate)=\Upsilon(D_m,\hate_m)$. The diagram
$D_m$ has the same combinatorial data as the closure diagram $D_w$ of a braid word $w$ in which every position is
moved, and $\Upsilon$ depends only on the combinatorial data. All the $D_i$ are diagrams of $L$, so $L$ is the closure
of $w$, and $\Upsilon(D_m,\hate_m)=V_1(L)$ by Proposition~\ref{prop.braidp}.
\end{proof}

\begin{corollary}\label{cor.treesD}
Let $D$ be a connected diagram of $L$ with at least one crossing and let $\hate$ be an edge of $D$. Then the
statements of Proposition~\ref{prop.trees} hold for $D$ and $\hate$: with the $a$-trees and the $b$-trees of
Definition~\ref{def.trees} for the diagram $D$,
\begin{gather*}
   P_{D,\hate}(x)=\sum_{\phi}\ \prod_{j=1}^cp_{\vartheta_j(\phi)}(x_j),\qquad
   \tP_{D,\hate}(y)=\sum_{\psi}\ \prod_{j=1}^cp_{\vartheta_j(\psi)}(y_j),\\
   V_1(L;t_0,t_1)=(-1)^{\mu-1}\sum_{(\phi,\psi)}\ \prod_{j=1}^cW_j\bigl(\vartheta_j(\phi),\vartheta_j(\psi)\bigr).
\end{gather*}
\end{corollary}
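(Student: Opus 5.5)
The plan is to obtain Corollary~\ref{cor.treesD} by repeating the proof of Proposition~\ref{prop.trees} for $D$, with Theorem~\ref{thm.diagrams} in place of Theorem~\ref{thm.B}.

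\emph{The two tree expansions.} These only use the local structure of the edge matrices $T^a_D(x)$ and $T^b_D(y)$, which are given by the same formulas as in Definition~\ref{def.T}. Every edge of $D$ leaves exactly one crossing and enters exactly one crossing. So every edge is $o'_j$ or $u'_j$ for exactly one $j$, and also $o_j$ or $u_j$ for exactly one $j$.

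\begin{itemize}
\item In $T^a_D(x)$, the row $o'_j$ is $e_{o_j}^{\T}$ and the row $u'_j$ is $(1-x_j)e_{o_j}^{\T}+x_je_{u_j}^{\T}$. Hence all row sums equal $1$.
\item Apply Lemma~\ref{lem.mtt} with $Z=T^a_D(x)$ and $d=\hate$. A rooted tree $f$ gives a nonzero product only if $f(o'_j)=o_j$ and $f(u'_j)\in\{u_j,o_j\}$; these are exactly the $a$-trees of Definition~\ref{def.trees} for $D$.
\item The factor contributed at crossing $j$ is $p_{\vartheta_j(\phi)}(x_j)$: it is $x_j$, $1-x_j$, or $1$ when $u'_j=\hate$ (the row $\hate$ is deleted). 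The row $o'_j$ always contributes $1$. This gives the formula for $P_{D,\hate}$.
\item For $\tP_{D,\hate}$, apply Lemma~\ref{lem.mtt} to $Z=T^b_D(y)^{\T}$, whose rows are the columns of $T^b_D(y)$. The column $u_j$ is $e_{u'_j}$ and the column $o_j$ is $y_je_{o'_j}+(1-y_j)e_{u'_j}$, so the column sums are $1$.
\item Deleting row and column $\hate$ commutes with transposition, so the determinant is unchanged. The contributing trees are the $b$-trees, with weights $p_{\vartheta_j(\psi)}(y_j)$.
\end{itemize}

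\emph{The state sum.} Multiply the two expansions. The product $P_{D,\hate}\tP_{D,\hate}$ is a sum over pairs $(\phi,\psi)$ of the products $\prod_j p_{\vartheta_j(\phi)}(x_j)\,p_{\vartheta_j(\psi)}(y_j)$. Each factor has degree $\le1$ in $x_j$ and in $y_j$ and involves no other variable.

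The map $\BE_D$ is defined by the same rule as \eqref{eq.E}, so the multiplicativity \eqref{eq.Emult} holds verbatim. Hence $\BE_D$ of each term is $\prod_jW_j(\vartheta_j(\phi),\vartheta_j(\psi))$, with the same table as in Proposition~\ref{prop.trees}(ii); statement (iii) is the local identity \eqref{eq.E1}. Theorem~\ref{thm.diagrams} then identifies $(-1)^{\mu-1}\BE_D[P_{D,\hate}\tP_{D,\hate}]$ with $V_1(L)$.

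\emph{Expected obstacle.} The only point needing care is checking that Lemma~\ref{lem.mtt} applies for a general diagram, since no braid structure is available. The hypothesis is only on row sums for $k\ne d$, and these are guaranteed by the local form of the edge matrices. I also need to confirm that maps with a forced value into a zero entry contribute nothing, so that the nonzero-weight trees are exactly the $a$- and $b$-trees.
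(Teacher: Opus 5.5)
Your proposal is correct and takes the same approach as the paper. The paper's proof likewise observes that Proposition~\ref{prop.trees}(i) uses only Lemma~\ref{lem.mtt} (applied to $T^a_D(x)$ and to $T^b_D(y)^{\T}$) and the fact that every edge is an outgoing edge of exactly one crossing and an incoming edge of exactly one crossing; the state sum then follows from the multiplicativity of $\BE_D$, with Theorem~\ref{thm.diagrams} in place of Theorem~\ref{thm.B}.
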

\begin{proof}
The proof of Proposition~\ref{prop.trees}(i) only uses Lemma~\ref{lem.mtt} and the fact that every edge is the
edge $o'_j$ or $u'_j$ of exactly one crossing and the edge $o_j$ or $u_j$ of exactly one crossing; it applies to
$T^a_D$ and $T^b_D$. The state sum follows from it as in the proof of Proposition~\ref{prop.trees} (ii), with
Theorem~\ref{thm.diagrams} in place of Theorem~\ref{thm.B}.
\end{proof}

\begin{remark}\label{rem.diagrams}
\begin{enumerate}\renewcommand{\theenumi}{\roman{enumi}}
\item The right-hand side of Theorem~\ref{thm.diagrams} does not depend on the edge $\hate$, but the two determinants
      do. We do not know a direct proof of this independence: here it comes from Theorem~\ref{thm.GHKSTp}, that
      is, from the properties (P$_1$) and (P$_2$) of the $R$-matrix.
\item \emph{Convention.} As in Section~\ref{sec.setup}, the letter $\sigma_i$, which is represented by $R$, is a positive
      crossing; this agrees with \cite{GHKST}, where $R$ is the weight of the positive crossings. The signs $\ve_j$ and
      the over -- and under -- strands in the definition of $T^a_D$ and $T^b_D$ refer to this convention. With the opposite
      convention the link $L$ would have to be replaced by its mirror image in Theorem~\ref{thm.diagrams}. The
      statement of Theorem~\ref{thm.alternating} below is not sensitive to this, because the mirror image of an
      alternating diagram is alternating.
\end{enumerate}
\end{remark}

\section{Alternating links}\label{sec.alternating}

\subsection{Statements}\label{ssec.altstatement}
In this section we combine Theorem~\ref{thm.diagrams} with an argument of Crowell
\cite{Crowell} for the Alexander polynomial, and we obtain that the $V_1$-polynomial of an alternating link is
alternating, proving Theorem~\ref{thm.alternating}. It follows from a formula
without cancellations, which we now state.

An oriented link diagram is \emph{alternating} if each of its components passes alternately over and under at the
crossings which it meets; for a diagram in the sense of Section~\ref{ssec.diagdef} this means that every edge is the
over-strand at exactly one of its two ends. A link is \emph{alternating} if it has an alternating diagram. We will assume alternating diagrams are \emph{reduced}, i.e. there do not exist any crossings which can be removed by applying a Reidemeister I move. 

Let $D$ be a connected alternating diagram with $c\ge1$ crossings. Every edge of $D$ is the over-strand at exactly
one of its ends, so the edges are of two types. Let $\CE_{\rm I}$ be the set of the edges which leave a crossing as
its over-strand and enter the next crossing as its under-strand, and let $\CE_{\rm II}$ be the set of the edges
which leave a crossing as its under-strand and enter the next crossing as its over-strand. Thus
\[
   \CE_{\rm I}=\{o'_1,\dots,o'_c\}=\{u_1,\dots,u_c\},\qquad \CE_{\rm II}=\{u'_1,\dots,u'_c\}=\{o_1,\dots,o_c\},
\]
and each of the two sets has $c$ elements. We fix an edge $\hate\in\CE_{\rm I}$, and we denote by $k_0$ the crossing
which $\hate$ enters and by $j_1$ the crossing which it leaves: $\hate=u_{k_0}=o'_{j_1}$.

\begin{definition}\label{def.alttrees}
An \emph{under-tree} of $(D,\hate)$ is a map $\tau$ which assigns to every crossing $j$ one of the two edges of the
under-strand at $j$, that is $\tau(j)\in\{u_j,u'_j\}$, with the following property. Let $f_\tau(j)=*$ if $\tau(j)=\hate$, and
otherwise let $f_\tau(j)$ be the crossing at the other end of the edge $\tau(j)$: the crossing which $\tau(j)$ leaves if
$\tau(j)=u_j$, and the crossing which it enters if $\tau(j)=u'_j$. Then every crossing is mapped to $*$ by some iterate of
$f_\tau$. We put $S(\tau)=\{j:\tau(j)=u_j\}$.

An \emph{over-tree} of $(D,\hate)$ is a map $\tau'$ which assigns to every crossing $j$ one of the two edges of the
over-strand at $j$, that is $\tau'(j)\in\{o_j,o'_j\}$, with the same property for the map $f_{\tau'}$ defined in the same
way: $f_{\tau'}(j)=*$ if $\tau'(j)=\hate$, and otherwise $f_{\tau'}(j)$ is the crossing at the other end of $\tau'(j)$. We put
$S'(\tau')=\{j:\tau'(j)=o'_j\}$.
\end{definition}

Expressed plainly: in an under-tree one leaves each crossing along its under-strand, forwards or backwards, and from every
crossing one must eventually arrive at the cut edge $\hate$; in an over-tree one leaves each crossing along its
over-strand. Here is an equivalent description. Direct every edge of $D$ from the crossing at which it is the
under-strand to the crossing at which it is the over-strand, and let the edge $\hate$ lead from $k_0$ to a new vertex
$*$ instead of $j_1$. In the directed graph obtained in this way, which may have loops and multiple arcs, the
under-trees are the spanning trees (a tree being a set of edges of $D$) which contain exactly one arc starting at
each crossing; every crossing is then joined to $*$ by a directed path of the tree. The over-trees are the spanning
trees of the same kind when all the edges are directed in the opposite way, where $\hate$ now leads from $j_1$ to
$*$. The edge $\hate$ is an
edge of the under-strand only at $k_0$ and an edge of the over-strand only at $j_1$, and the vertex $*$ can only be
reached along $\hate$; so $\tau(k_0)=\hate$ for every under-tree and $\tau'(j_1)=\hate$ for every over-tree. The sets
$S(\tau)$ and $S'(\tau')$ record the crossings which are left along an edge of $\CE_{\rm I}$.

\begin{theorem}\label{thm.positive}
Let $L$ be a link with $\mu$ components, let $D$ be a connected alternating diagram of $L$ with at least one
crossing, and let $\hate$ be an edge of $D$ which leaves a crossing as its over-strand. Then
\[
   (-1)^{\mu-1}\,V_1(L;-t_0,-t_1)\;=\;\sum_{(\tau,\tau')}\ \ \prod_{j\in S(\tau)\smallsetminus S'(\tau')}t_0^{\ve_j}
        \prod_{j\in S'(\tau')\smallsetminus S(\tau)}t_1^{\ve_j},
\]
where the sum runs over all pairs of an under-tree $\tau$ and an over-tree $\tau'$ of $(D,\hate)$. There is at least one
such pair. Thus the coefficient of $t_0^it_1^k$ in $(-1)^{\mu-1}V_1(L;-t_0,-t_1)$ is the number of pairs $(\tau,\tau')$ such
that $\sum_{j\in S(\tau)\smallsetminus S'(\tau')}\ve_j=i$ and $\sum_{j\in S'(\tau')\smallsetminus S(\tau)}\ve_j=k$.
\end{theorem}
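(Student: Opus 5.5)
The plan is to apply Theorem~\ref{thm.diagrams} to the alternating diagram $D$ with the cut edge $\hate\in\CE_{\rm I}$, and to show that for alternating diagrams each of the two determinants has an expansion without cancellation:
\[
P_{D,\hate}(x)=\sigma\sum_{\tau}\ \prod_{j\in S(\tau)}(-x_j),\qquad
\tP_{D,\hate}(y)=\sigma'\sum_{\tau'}\ \prod_{j\in S'(\tau')}(-y_j),
\]
with signs $\sigma,\sigma'=\pm1$ independent of $\tau,\tau'$. Once this holds the theorem is bookkeeping. By \eqref{eq.E} and the multiplicativity \eqref{eq.Emult}, $\BE_D\bigl[x^{S}y^{S'}\bigr]=\prod_{j\in S\smallsetminus S'}t_0^{\ve_j}\prod_{j\in S'\smallsetminus S}t_1^{\ve_j}$. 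The accompanying sign is $(-1)^{|S|+|S'|}=(-1)^{|S\smallsetminus S'|+|S'\smallsetminus S|}$, and this is exactly the sign produced by the substitution $t_0\mapsto-t_0$, $t_1\mapsto-t_1$. Hence
\[
(-1)^{\mu-1}V_1(L;-t_0,-t_1)=\sigma\sigma'\sum_{(\tau,\tau')}\prod t_0^{\ve_j}\prod t_1^{\ve_j},
\]
and it remains to check $\sigma\sigma'=1$. I would do this by evaluating a single case: at $t_0=t_1=1$ the left side is, up to the sign bookkeeping, the value of $\Delta_L$-type data, since Remark~\ref{rem.burau} with $t_0t_1=1$ gives $\Delta_L(-1)^2>0$-type positivity. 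Alternatively, I would track $\sigma,\sigma'$ directly through the elimination below.

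To prove the expansion of $P_{D,\hate}$, I use Lemma~\ref{lem.elim} to eliminate all the nodes of $\CE_{\rm I}$ other than $\hate$. This is allowed because the row $o'_j$ of $T^a_D$ is the unit row $e_{o_j}^{\T}$ with $o_j\in\CE_{\rm II}$, so the needed diagonal entries vanish. The row of $u'_j\in\CE_{\rm II}$ then becomes $(1-x_j)e_{o_j}^{\T}+x_j e_{o_{j'}}^{\T}$, where $u_j=o'_{j'}$, except that the term $x_je_{\hate}^{\T}$ is kept when $u_j=\hate$. Identify $\CE_{\rm II}$ with the crossings via $u'_j\leftrightarrow j$, and $o_j=u'_k\leftrightarrow k$. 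The reduced matrix is then indexed by crossings together with the cut node. Its entries are: weight $1-x_j$ from $j$ to the crossing $k$ which $o_j$ leaves, and weight $x_j$ from $j$ to the crossing that $u_j$ leaves. The latter is, when read correctly, the neighbour of $j$ along the under-strand backwards, which matches $\tau(j)=u_j$.

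The main obstacle is the remaining term. It does not yet match $\tau(j)=u'_j$ (moving forward along the under-strand), and its weight $1-x_j$ is not a monomial. Here I would follow Crowell's idea. I rewrite $1-x_j$ in terms of the other row by a column operation that uses alternation: along a component, $u'_j$ and $o_j$ are successive members of $\CE_{\rm II}$ separated by one $\CE_{\rm I}$ edge. Alternatively, I transpose the roles: I apply Lemma~\ref{lem.mtt} not to rows but to columns, after multiplying rows by $-1$ on the appropriate subsets, so that the reduced matrix becomes $1-Z$ for a $Z$ whose row sums are $1$ and whose entries are exactly $-x_j$ (for the arc $u_j$) and $1$ (for the arc $u'_j$) in the directed graph described after Definition~\ref{def.alttrees}. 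Lemma~\ref{lem.mtt} then yields the sum over under-trees with weight $\prod_{j\in S(\tau)}(-x_j)$. I would first check this transformation on the trefoil, where it must give $P=1-x_1+x_1x_2$ with three under-trees. If the row-sum normalisation fails, I would fall back on comparing both sides as multilinear polynomials, using Lemma~\ref{lem.rankone} and the deletion/contraction recursion in a single $x_j$.

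The expansion of $\tP_{D,\hate}$ is obtained symmetrically: I use $T^b_D$ with its zero column sums, eliminate the nodes of $\CE_{\rm II}$ through the columns $u_j$ (each of which has a single entry), and obtain over-trees with weights $-y_j$ on $o'_j$. Finally, a pair $(\tau,\tau')$ exists because $D$ is connected: in the directed graph every crossing can reach $*$. Indeed, following under-strands forward along a component and passing to over-strands alternately reaches $\hate$, and connectedness lets one pass between components at crossings. Since the expansion has positive coefficients only, this gives $V_1(L)\neq0$ as well.
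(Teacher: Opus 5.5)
Your reduction is the paper's: apply Theorem~\ref{thm.diagrams}, expand each determinant without cancellation with a common sign, then do the parity bookkeeping for $\BE_D$. Your elimination of $\CE_{\rm I}\smallsetminus\{\hate\}$ is its Step~1. But the heart of the proof, the cancellation-free expansion itself, is not actually obtained. After the elimination, the row of $u'_j$ in $1-Z_{\rm II}$ is
\[
e_{u'_j}^{\T}-(1-x_j)e_{o_j}^{\T}-x_je_{o_{\mathsf m^{-1}(j)}}^{\T},\qquad u'_j=o_{\mathsf n(j)},\quad u_j=o'_{\mathsf m^{-1}(j)} .
\]
You label rows and columns by the \emph{same} bijection (an edge of $\CE_{\rm II}$ goes to the crossing it leaves). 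That is only a conjugation by a permutation matrix. So the matrix-tree theorem still produces the weights $1-x_j$ and $x_j$, i.e.\ the cancelling expansion of Proposition~\ref{prop.trees}(i). Moreover, with that labelling the $x_j$-entry lies in the column of $\mathsf n^{-1}\mathsf m^{-1}(j)$, not of the crossing that $u_j$ leaves. The remedies you list do not work:
\begin{itemize}
\item multiplying rows by $-1$ cannot make $1-x_j$ a diagonal entry;
\item the columns of $1-Z_{\rm II}$ do not sum to zero (the column of $o_l$ sums to $x_l-x_{\mathsf m(l)}$), so there is no column version of Lemma~\ref{lem.mtt} to apply;
\item the deletion/contraction fallback is not specified.
\end{itemize}

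The missing idea, which is Crowell's, is to index rows and columns by \emph{different} bijections: the row of $u'_j$ by $j$, and the column of $o_l$ by $l$. This is $\Lambda=-\mathsf N(1-Z_{\rm II})$ with $\mathsf N[j,\mathsf n(j)]=1$, and
\[
\Lambda[j,\cdot\,]=(1-x_j)\,e_j^{\T}-e_{\mathsf n(j)}^{\T}+x_j\,e_{\mathsf m^{-1}(j)}^{\T}.
\]
Now the identity part of $1-Z_{\rm II}$ has become the arc $u'_j$ from $j$ to $\mathsf n(j)$ with weight $1$. The $x_j$-term is the arc $u_j$ with weight $-x_j$ (to $*$ if $j=k_0$), and $1-x_j$ is the diagonal. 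The rows sum to zero once the arc to $*$ is counted, so Lemma~\ref{lem.mtt} gives $\det\Lambda=\sum_\tau\prod_{j\in S(\tau)}(-x_j)$.

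This also removes your sign issue: $P_{D,\hate}=(-1)^c\sgn(\mathsf n)\det\Lambda$. The same construction for $\tP_{D,\hate}$ eliminates the nodes $u_j\in\CE_{\rm I}$ in $(T^b_D)^{\T}$, not nodes of $\CE_{\rm II}$, and uses $\mathsf n^{-1}$, so $\sigma=\sigma'$. Your evaluation at $t_0=t_1=1$ could be made to work, but only via $(-1)^{\mu-1}V_1(L;-1,-1)=4^{\mu-1}g(-4)^2\ge0$, where $\nabla_L(z)=z^{\mu-1}g(z^2)$. As written, ``$\Delta_L(-1)^2>0$'' is false for even $\mu$.

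Lastly, your existence argument does not give directed paths. Arriving at a crossing on its over-strand, the only arcs leaving it are the under-strand edges of the other strand. What is needed is that each crossing has exactly two outgoing and two incoming arcs. Then, by counting, the set of crossings that reach $k_0$ has no arcs leaving it, and connectedness makes it everything.
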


\begin{example}[trefoil]\label{ex.alttrefoil}
Let $D$ be the closure diagram of $\sigma_1^3$, with the notation of Example~\ref{ex.trees}: the edges are the nodes
$(p,j)$ with $p\in\{1,2\}$, $j\in\BZ/3$, and $(o_j,u_j,o'_j,u'_j)=((1,j),(2,j),(2,j+1),(1,j+1))$. The diagram is alternating,
$\CE_{\rm I}$ consists of the edges $(2,j)$ and $\CE_{\rm II}$ of the edges $(1,j)$. Take $\hate=(2,1)=u_1=o'_3$, so that
$k_0=1$ and $j_1=3$. For an under-tree, $f_\tau(j)$ is $j+1$ if $\tau(j)=u'_j$ and $j-1$ if $\tau(j)=u_j$, except that
$f_\tau(1)=*$ if $\tau(1)=u_1$. The root $*$ can only be reached from the crossing $1$, so $\tau(1)=u_1$; of the four choices
at the crossings $2$ and $3$ only $\tau(2)=u'_2$, $\tau(3)=u_3$ is excluded, because it gives the cycle $2\mapsto3\mapsto2$.
So there are three under-trees, with $S(\tau)=\{1\}$, $\{1,2\}$, $\{1,2,3\}$. Likewise $f_{\tau'}(j)$ is $j-1$ if
$\tau'(j)=o_j$ and $j+1$ if $\tau'(j)=o'_j$, except that $f_{\tau'}(3)=*$ if $\tau'(3)=o'_3$, and there are three over-trees, with
$S'(\tau')=\{3\}$, $\{2,3\}$, $\{1,2,3\}$. All the crossings are positive, and the nine pairs contribute
\[
  t_0t_1+t_0t_1^2+t_1^2\;+\;t_0^2t_1+t_0t_1+t_1\;+\;t_0^2+t_0+1
     \;=\;V_1(3_1;-t_0,-t_1),
\]
in agreement with Example~\ref{ex.trefoil}.
\end{example}

\subsection{The two determinants of an alternating diagram}\label{ssec.crowell}
We keep the notation of Section~\ref{ssec.altstatement}. Every edge of $\CE_{\rm II}$ is the edge $u'_j$ of exactly one
crossing and the edge $o_k$ of exactly one crossing, and every edge of $\CE_{\rm I}$ is the edge $o'_j$ of exactly one
crossing and the edge $u_k$ of exactly one crossing. So there are two permutations $\mathsf n$ and $\mathsf m$ of
$\{1,\dots,c\}$ such that
\[
    u'_j=o_{\mathsf n(j)}\qquad\text{and}\qquad o'_j=u_{\mathsf m(j)}\qquad\text{for all }j .
\]
Thus $\mathsf n(j)$ is the crossing which the edge $u'_j$ enters, $\mathsf m(j)$ is the crossing which the edge $o'_j$
enters, and $k_0=\mathsf m(j_1)$. Nothing is assumed about these permutations: we may have $\mathsf n(j)=j$ or
$\mathsf m(j)=j$ (the edge is then a loop), $\mathsf n(j)=\mathsf m^{-1}(j)$ (the two edges of the under-strand at $j$ lead
to the same crossing), or $k_0=j_1$.
The following proposition is the argument of Crowell \cite{Crowell}, with one variable for each crossing.

\begin{proposition}\label{prop.crowell}
Let $D$ be a connected alternating diagram with $c\ge1$ crossings and let $\hate\in\CE_{\rm I}$. Then
\[
   P_{D,\hate}(x)=(-1)^c\sgn(\mathsf n)\sum_{\tau}\ \prod_{j\in S(\tau)}(-x_j),\qquad
   \tP_{D,\hate}(y)=(-1)^c\sgn(\mathsf n)\sum_{\tau'}\ \prod_{j\in S'(\tau')}(-y_j),
\]
where $\tau$ runs over the under-trees and $\tau'$ over the over-trees of $(D,\hate)$. Moreover $(D,\hate)$ has at least
one under-tree and at least one over-tree.
\end{proposition}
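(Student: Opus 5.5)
The plan is to reduce $1-T^a_D(x)$ with its row and column $\hate$ deleted to a $c\times c$ matrix indexed by the crossings. We then expand that determinant as in the proof of Lemma~\ref{lem.mtt}, with the alternating structure forcing all surviving terms to have the same sign. The statement for $\tP_{D,\hate}$ should follow by the same argument applied to $T^b_D(y)^{\T}$, with over- and under-strands exchanged.

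\emph{Step 1: eliminate the edges of $\CE_{\rm I}$.} In $Z=T^a_D(x)$ the row $o'_j$ of an edge of $\CE_{\rm I}$ is $e_{o_j}^{\T}$, and $o_j\in\CE_{\rm II}$. Hence $Z[e,e']=0$ whenever $e,e'\in\CE_{\rm I}$, and we may eliminate every node of $\CE_{\rm I}\smallsetminus\{\hate\}$ successively by Lemma~\ref{lem.elim}, keeping the deleted index $\hate$. The row of $u'_j\in\CE_{\rm II}$ is $(1-x_j)e_{o_j}^{\T}+x_je_{u_j}^{\T}$. Here $u_j=o'_{\mathsf m^{-1}(j)}$, whose row is $e_{o_{\mathsf m^{-1}(j)}}^{\T}$, so after elimination the row $u'_j$ becomes
\[
(1-x_j)\,e_{o_j}^{\T}+x_j\,e_{o_{\mathsf m^{-1}(j)}}^{\T}\qquad(j\ne k_0),
\]
and for $j=k_0$ it becomes $(1-x_{k_0})e_{o_{k_0}}^{\T}+x_{k_0}e_{\hate}^{\T}$, whose last entry is removed with the column $\hate$. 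Label the rows by $j$ (the edge $u'_j=o_{\mathsf n(j)}$) and the columns by $k$ (the edge $o_k$). Then $P_{D,\hate}(x)=\det A$, where row $j$ of $A$ has the entry $+1$ in column $\mathsf n(j)$, the entry $-(1-x_j)$ in column $j$, and the entry $-x_j$ in column $\mathsf m^{-1}(j)$, the last one being omitted for $j=k_0$. Coinciding columns add up.

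\emph{Step 2: expansion.} Write $-(1-x_j)=-1+x_j$ and expand $\det A$ multilinearly in the rows, each row written as a sum of its elementary entries. The sum of the entries of row $j$ is $0$ for $j\ne k_0$ and $x_{k_0}$ for $j=k_0$. This is exactly the situation of Lemma~\ref{lem.mtt} after multiplying by the permutation matrix of $\mathsf n^{-1}$, which puts the entry $1$ of row $j$ on the diagonal and contributes the factor $\sgn(\mathsf n)$. In the normalised matrix $1-Z'$, the off-diagonal entries of $Z'$ in row $j$ are $1-x_j$ in column $\mathsf n^{-1}(j)$ and $x_j$ in column $\mathsf n^{-1}\mathsf m^{-1}(j)$. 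Row $k_0$ has sum $1-x_{k_0}$ instead of $0$, so in the lemma the defect $x_{k_0}$ plays the role of an arc to the root $*$.

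The trick of Crowell is to pass to the variables $-x_j$. Instead of the pair $(1-x_j,\,x_j)$ we use the "diagonal" pair $(1,\,-x_j)$, which corresponds to the choices $\tau(j)=u'_j$ and $\tau(j)=u_j$. The cycle cancellation in the proof of Lemma~\ref{lem.mtt} should then leave exactly the rooted trees. These are the maps $f_\tau$ sending $j$ to $\mathsf n(j)$ (along $u'_j$) or to $\mathsf m^{-1}(j)$ (along $u_j$), with $\tau(k_0)=u_{k_0}=\hate$ leading to $*$. Each such tree has weight $\prod_{j\in S(\tau)}(-x_j)$. Collecting the signs should give the overall factor $(-1)^c\sgn(\mathsf n)$.

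This bookkeeping is the main obstacle. The entry $-(1-x_j)$ sits in the column of $o_j$ and is not an arc of the under-tree graph, so it must be shown to cancel exactly. I would first try to verify this on the trefoil (Example~\ref{ex.alttrefoil}). Failing that, I would perform the column operation replacing column $k$ by the sum over its $\mathsf n$-orbit, or run the cancellation argument of Lemma~\ref{lem.mtt} with the cycles taken in the functional graph of $\mathsf n$.

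\emph{Existence of trees.} Direct each edge from its under-end to its over-end. Every crossing then has two outgoing arcs ($u_j,u'_j$) and two incoming arcs ($o_j,o'_j$). A connected finite directed graph with equal in- and out-degrees is strongly connected. Hence every crossing has a directed path to $k_0$, and then along $\hate=u_{k_0}$ to $*$. A breadth-first search tree toward $*$ gives an under-tree, and reversing all arcs gives an over-tree in the same way.
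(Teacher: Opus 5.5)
Your Step~1 is exactly the paper's elimination of $\CE_{\rm I}\smallsetminus\{\hate\}$. Your existence argument is also fine: the fact that a connected balanced digraph is strongly connected replaces the paper's counting argument with the set $\mathcal R$. The gap is Step~2, where the identification of the determinant with the under-tree sum is left open as ``the main obstacle''. The normalisation you choose there also leads away from it. Permuting by $\mathsf n^{-1}$ so that the entry $+1$ sits on the diagonal just rewrites $1-Z_{\rm II}$ in the labelling by the edges $u'_k$. Its matrix-tree expansion has arcs $j\to\mathsf n^{-1}(j)$ and $j\to\mathsf n^{-1}\mathsf m^{-1}(j)$ with weights $1-x_j$ and $x_j$. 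This is essentially the $a$-tree expansion of Corollary~\ref{cor.treesD} with the forced arcs $o'_j\mapsto o_j$ contracted. Its weights $1+(-x_j)$ and $-(-x_j)$ have opposite signs as polynomials in the $-x_j$, so it has cancellations and is not a sum over under-trees.

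The missing observation is that the matrix $A$ you wrote down in Step~1 (rows $j\leftrightarrow u'_j$, columns $k\leftrightarrow o_k$) is already the right one, because its diagonal entry is $-(1-x_j)$. So $\Lambda=-A$ has row $j$ equal to $(1-x_j)e_j^{\T}-e_{\mathsf n(j)}^{\T}+x_je_{\mathsf m^{-1}(j)}^{\T}$. Its diagonal entry $1-x_j=1+(-x_j)$ is the total weight of the two arcs leaving $j$:
\begin{itemize}
\item the edge $u'_j$, leading to $\mathsf n(j)$ with weight $1$;
\item the edge $u_j$, leading to $\mathsf m^{-1}(j)$ (or to $*$ if $j=k_0$) with weight $-x_j$.
\end{itemize}
The off-diagonal entries of $\Lambda$ are minus these weights. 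Thus $\Lambda$ is the weighted Laplacian of the under-tree digraph rooted at $*$. Lemma~\ref{lem.mtt} then gives $\det\Lambda=\sum_\tau\prod_{j\in S(\tau)}(-x_j)$ at once, after dividing row $j$ by $1-x_j$ over $\BQ(x_1,\dots,x_c)$ as the paper does. The entry $-(1-x_j)$ does not have to cancel. It is the degree term, and splitting it as $1+(-x_j)$ at the fixed points of the permutation expansion produces exactly the two choices of $\tau(j)$.

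Your signs also need fixing. Labelling the rows and columns of $1-Z_{\rm II}$ by the two different bijections $j\mapsto u'_j=o_{\mathsf n(j)}$ and $k\mapsto o_k$ costs $\sgn(\mathsf n)$. So $P_{D,\hate}=\sgn(\mathsf n)\det A$, not $\det A$. The relation $A=-\Lambda$ supplies the factor $(-1)^c$, giving $P_{D,\hate}=(-1)^c\sgn(\mathsf n)\det\Lambda$. The same reading of the diagonal, with weight $1$ on $o_j$ and $-y_j$ on $o'_j$, handles $\tP_{D,\hate}$.
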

\begin{proof}
\emph{Step 1: elimination of the edges of $\CE_{\rm I}$.} Let $Z$ be the matrix $T^a_D(x)$ with the row and the column
of $\hate$ removed, so that $P_{D,\hate}(x)=\det(1-Z)$. By the definition of $T^a_D$, the rows of $Z$ are
\[
   Z[o'_j,\cdot\,]=e_{o_j}^{\T}\quad(o'_j\ne\hate),\qquad\qquad Z[u'_j,\cdot\,]=(1-x_j)\,e_{o_j}^{\T}+x_j\,e_{u_j}^{\T},
\]
where the term $x_je_{u_j}^{\T}$ is absent if $u_j=\hate$, that is, if $j=k_0$. The rows of the edges of $\CE_{\rm I}$ have
their only non-zero entry in a column of $\CE_{\rm II}$. We eliminate the nodes of $\CE_{\rm I}\smallsetminus\{\hate\}$ one
after the other by Lemma~\ref{lem.elim}. The elimination of $\zeta\in\CE_{\rm I}$ adds $Z[k,\zeta]Z[\zeta,l]$ to the entry
$Z[k,l]$; this is non-zero only if $k,l\in\CE_{\rm II}$, so the rows of the remaining edges of $\CE_{\rm I}$ are not
changed and their diagonal entries remain zero. Since $u_j=o'_{\mathsf m^{-1}(j)}$, we end up with the matrix
$Z_{\rm II}$ indexed by $\CE_{\rm II}$ whose rows are
\[
   Z_{\rm II}[u'_j,\cdot\,]=(1-x_j)\,e_{o_j}^{\T}+x_j\,e_{o_{\mathsf m^{-1}(j)}}^{\T}\qquad(\text{the last term is absent if }j=k_0),
\]
and $P_{D,\hate}(x)=\det(1-Z_{\rm II})$.

\smallskip\emph{Step 2: a Laplacian.} We index the rows and the columns of $1-Z_{\rm II}$ by the crossings, the
crossing $l$ standing for the edge $o_l$. The row of the edge $u'_j=o_{\mathsf n(j)}$ is then the row $\mathsf n(j)$, and
it is $e_{\mathsf n(j)}^{\T}-(1-x_j)e_j^{\T}-x_je_{\mathsf m^{-1}(j)}^{\T}$. Let $\Lambda$ be the $c\times c$ matrix whose row $j$ is
minus the row $\mathsf n(j)$ of $1-Z_{\rm II}$:
\begin{equation}\label{eq.Lambda}
   \Lambda[j,\cdot\,]=(1-x_j)\,e_j^{\T}-e_{\mathsf n(j)}^{\T}+x_j\,e_{\mathsf m^{-1}(j)}^{\T}\qquad(\text{the last term is absent if }j=k_0).
\end{equation}
Then $\Lambda=-\mathsf N\,(1-Z_{\rm II})$, where $\mathsf N$ is the permutation matrix with $\mathsf N[j,\mathsf n(j)]=1$ for
all $j$, and therefore $P_{D,\hate}(x)=\det(1-Z_{\rm II})=(-1)^c\sgn(\mathsf n)\det\Lambda$.

\smallskip\emph{Step 3: the matrix-tree theorem.} By Definition~\ref{def.alttrees}, the crossing at the other end of
$u'_j$ is $\mathsf n(j)$ and the crossing at the other end of $u_j$ is $\mathsf m^{-1}(j)$. We give the weight $1$ to the
edge $u'_j$ and the weight $-x_j$ to the edge $u_j$. For a crossing $j$ and $l\in\{1,\dots,c,*\}$ let $\mathsf w_j(l)$ be
the sum of the weights of the edges $e\in\{u'_j,u_j\}$ which lead from $j$ to $l$; here $u'_j$ leads to $\mathsf n(j)$,
and $u_j$ leads to $\mathsf m^{-1}(j)$ if $j\ne k_0$ and to $*$ if $j=k_0$. Thus $\sum_l\mathsf w_j(l)=1-x_j$, and by
\eqref{eq.Lambda} $\Lambda[j,l]=(1-x_j)\d_{jl}-\mathsf w_j(l)$ for $l\ne*$; this also holds when two of the crossings $j$,
$\mathsf n(j)$, $\mathsf m^{-1}(j)$ coincide. We work over the field of rational functions in $x_1,\dots,x_c$. Let $Z^*$ be
the matrix indexed by $\{1,\dots,c,*\}$ with $Z^*[j,l]=\mathsf w_j(l)/(1-x_j)$ and with the row of $*$ equal to zero. Its
rows other than the row of $*$ have sum $1$, and $(1-Z^*)_{\hat *}$ is the matrix $\Lambda$ with its row $j$ divided by
$1-x_j$, for every $j$. By Lemma~\ref{lem.mtt},
\[
   \det\Lambda=\prod_{j=1}^c(1-x_j)\,\det\bigl((1-Z^*)_{\hat *}\bigr)=\sum_f\ \prod_{j=1}^c\mathsf w_j(f(j)),
\]
where $f$ runs over the trees rooted at $*$. Expanding each $\mathsf w_j(f(j))$ as a sum over the edges
$\tau(j)\in\{u'_j,u_j\}$ which lead from $j$ to $f(j)$, we obtain the sum over all maps $\tau$ such that $f_\tau$ is a tree
rooted at $*$, that is, over the under-trees, of the products $\prod_{j\in S(\tau)}(-x_j)$. This proves the formula for
$P_{D,\hate}$.

\smallskip\emph{Step 4: the polynomial $\tP_{D,\hate}$.} A matrix and its transpose have the same determinant, so
$\tP_{D,\hate}(y)=\det(1-Z)$, where now $Z$ is the transpose of the matrix $T^b_D(y)$ with the row and the column of
$\hate$ removed. By the definition of $T^b_D$, the rows of $Z$ are
\[
   Z[u_j,\cdot\,]=e_{u'_j}^{\T}\quad(u_j\ne\hate),\qquad\qquad Z[o_j,\cdot\,]=(1-y_j)\,e_{u'_j}^{\T}+y_j\,e_{o'_j}^{\T},
\]
where the term $y_je_{o'_j}^{\T}$ is absent if $o'_j=\hate$, that is, if $j=j_1$. The situation is the same as in Step~1,
with the roles of the over-strands and of the under-strands exchanged: the rows of the edges of $\CE_{\rm I}$ have
their only non-zero entry in a column of $\CE_{\rm II}$. Eliminating the edges of $\CE_{\rm I}\smallsetminus\{\hate\}$, and
using $o'_j=u_{\mathsf m(j)}$, we obtain the matrix indexed by $\CE_{\rm II}$ with the rows
$(1-y_j)e_{u'_j}^{\T}+y_je_{u'_{\mathsf m(j)}}^{\T}$ for the edges $o_j$, where the last term is absent if $j=j_1$. We now let
the crossing $l$ stand for the edge $u'_l$; the edge $o_j=u'_{\mathsf n^{-1}(j)}$ is then the row $\mathsf n^{-1}(j)$. With
\[
   \Lambda'[j,\cdot\,]=(1-y_j)\,e_j^{\T}-e_{\mathsf n^{-1}(j)}^{\T}+y_j\,e_{\mathsf m(j)}^{\T}\qquad(\text{the last term is absent if }j=j_1)
\]
we get, as in Step~2, $\tP_{D,\hate}(y)=(-1)^c\sgn(\mathsf n^{-1})\det\Lambda'=(-1)^c\sgn(\mathsf n)\det\Lambda'$. The crossing
at the other end of $o_j$ is $\mathsf n^{-1}(j)$ and the crossing at the other end of $o'_j$ is $\mathsf m(j)$, so Step~3,
with the weight $1$ for $o_j$ and the weight $-y_j$ for $o'_j$, gives
$\det\Lambda'=\sum_{\tau'}\prod_{j\in S'(\tau')}(-y_j)$.

\smallskip\emph{Step 5: existence of trees.} Direct every edge of $D$ from the crossing at which it is the
under-strand to the crossing at which it is the over-strand. In the directed graph obtained in this way, whose
vertices are the crossings, two arcs start at every vertex (the edges $u_j,u'_j$) and two arcs end at every vertex
(the edges $o_j,o'_j$). Let $\mathcal R$ be the set of the crossings from which there is a directed path to $k_0$.
No arc leads from the complement of $\mathcal R$ to $\mathcal R$. So the $2\,|\mathcal R^c|$ arcs which start in
$\mathcal R^c$ also end in $\mathcal R^c$; since exactly $2\,|\mathcal R^c|$ arcs end in $\mathcal R^c$, no arc leads
from $\mathcal R$ to $\mathcal R^c$ either. As $D$ is connected and $k_0\in\mathcal R$, every crossing belongs to
$\mathcal R$. Now let $\tau(k_0)=\hate$, and for $j\ne k_0$ let $\tau(j)$ be the first arc of a directed path of minimal
length from $j$ to $k_0$. Then $f_\tau(j)$ is closer to $k_0$ than $j$ for $j\ne k_0$, and $f_\tau(k_0)=*$, so $\tau$ is an
under-tree. Reversing all the arcs and replacing $k_0$ by $j_1$, we find an over-tree in the same way.
\end{proof}

\subsection{Proofs of Theorems~\ref{thm.positive} and~\ref{thm.alternating}}\label{ssec.altproofs}

\begin{proof}[Proof of Theorem~\ref{thm.positive}]
By Theorem~\ref{thm.diagrams},
\[
   V_1(L;t_0,t_1)=(-1)^{\mu-1}\,\BE_D\bigl[P_{D,\hate}(x)\,\tP_{D,\hate}(y)\bigr].
\]
By Proposition~\ref{prop.crowell}, in which the two signs $(-1)^c\sgn(\mathsf n)$ cancel,
\[
   P_{D,\hate}(x)\,\tP_{D,\hate}(y)=\sum_{(\tau,\tau')}\ \prod_{j\in S(\tau)}(-x_j)\prod_{j\in S'(\tau')}(-y_j).
\]
Let $S=S(\tau)$ and $S'=S'(\tau')$. By the definition \eqref{eq.E} of $\BE_D$, and since $|S|+|S'|$ and
$|S\smallsetminus S'|+|S'\smallsetminus S|$ have the same parity,
\[
   \BE_D\Bigl[\,\prod_{j\in S}(-x_j)\prod_{j\in S'}(-y_j)\Bigr]
     =\prod_{j\in S\smallsetminus S'}\bigl(-t_0^{\ve_j}\bigr)\prod_{j\in S'\smallsetminus S}\bigl(-t_1^{\ve_j}\bigr)
     =\prod_{j\in S\smallsetminus S'}(-t_0)^{\ve_j}\prod_{j\in S'\smallsetminus S}(-t_1)^{\ve_j},
\]
because $-t^{\ve}=(-t)^{\ve}$ for $\ve=\pm1$. Hence
\[
   V_1(L;t_0,t_1)=(-1)^{\mu-1}\sum_{(\tau,\tau')}\ \prod_{j\in S(\tau)\smallsetminus S'(\tau')}(-t_0)^{\ve_j}
        \prod_{j\in S'(\tau')\smallsetminus S(\tau)}(-t_1)^{\ve_j},
\]
and the formula of the theorem follows by replacing $t_0,t_1$ by $-t_0,-t_1$. The set of pairs is not empty by the
last statement of Proposition~\ref{prop.crowell}.
\end{proof}

For links which have a disconnected alternating diagram we need the following well-known fact.

\begin{lemma}\label{lem.split}
If $L$ is a split link, then $V_1(L)=0$.
\end{lemma}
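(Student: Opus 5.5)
The natural route goes through the state sum of Definition~\ref{def.V}, since the paper's diagrammatic formulas (Theorem~\ref{thm.diagrams}) assume a connected diagram. Let $L=L_1\sqcup L_2$ be split. I will choose braid words $w_1$ on $n_1$ strands and $w_2$ on $n_2$ strands with closures $L_1,L_2$, and set $w=w_1\cdot\mathrm{sh}(w_2)$, where $\mathrm{sh}$ shifts indices by $n_1$. The closure $\widehat w$ is then the split union $L$, and no letter of $w$ touches both blocks. Consequently
\[
\varrho(w)=\varrho_{w_1}\otimes\varrho_{w_2}\quad\text{on } W^{\otimes n_1}\otimes W^{\otimes n_2}.
\]

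Plugging this factorisation into Definition~\ref{def.V}, the trace over positions $n_1+1,\dots,n$ factors off. This gives
\[
V(w)=V(w_1)\cdot \tr\bigl(h^{\otimes n_2}\varrho(w_2)\bigr),
\]
where $V(w_1)$ is the $(e_1,e_1)$ entry of the partial trace for $w_1$. So it suffices to show that the full quantum supertrace $\tr\bigl(h^{\otimes n_2}\varrho(w_2)\bigr)$ vanishes.

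By Theorem~\ref{thm.GHKST}, the partial trace over strands $2,\dots,n_2$ of $(\id\otimes h^{\otimes(n_2-1)})\varrho(w_2)$ equals $V_1(L_2)\,\id_W$. Taking the remaining trace against $h$ on the first factor then gives
\[
\tr\bigl(h^{\otimes n_2}\varrho(w_2)\bigr)=V_1(L_2)\,\tr(h)=V_1(L_2)\,(-1+1+1-1)=0.
\]
Hence $V(w)=0$, and therefore $V_1(L)=0$ by Remark~\ref{rem.convention}. This is the standard "quantum dimension zero" argument.

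An alternative, independent of GHKST, uses the determinant formula directly. With the cut on strand $1$ lying in the first block, $B(x)'$ is block diagonal as $B_1(x)'\oplus B_2(x)$. Since every $\rho^{\pm}$ fixes $(1,1)^{\T}$, the matrix $1-B_2(x)$ kills $(1,\dots,1)^{\T}$, so $P_w=0$; this is exactly the footnote argument of Section~\ref{sec.lagrangian}. Then Theorem~\ref{thm.B} gives $V_1=0$. This second argument is cleaner, and I would present it, noting that $\BE[0]=0$.

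The only point needing care is checking that concatenating the braids with shifted indices really yields the split link, and not some band-sum of the components. This is clear because the two braids occupy disjoint sets of positions, so the closing arcs of the first block are nested inside those of the second and never cross them.
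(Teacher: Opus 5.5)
Your second argument, the one you say you would present, is exactly the paper's proof. In it, $B(x)=B_1\oplus B_2$ is block diagonal, $1-B_2$ kills $(1,\dots,1)^{\T}$ because each $\rho^{\pm}(x_j)$ fixes $(1,1)^{\T}$, so $P_w=0$ and Theorem~\ref{thm.B} gives $V_1(L)=0$. Your first route, factoring $V(w)=V(w_1)\,\tr\bigl(h^{\otimes n_2}\varrho(w_2)\bigr)$ and using Theorem~\ref{thm.GHKST} with $\tr(h)=0$, is also correct. It is the standard ``quantum dimension zero'' alternative, which bypasses the determinant formula but relies on the scalar partial trace property from \cite{GHKST}.
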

\begin{proof}
Let $L$ be the union of two links $L_1$ and $L_2$ which are separated by a sphere. By Alexander's theorem, $L_1$
and $L_2$ are the closures of braid words $w_1$ and $w_2$ on $n_1$ and $n_2$ strands, and $L$ is the closure of the
word $w$ on $n_1+n_2$ strands obtained by placing $w_2$ to the right of $w_1$; no letter of $w$ is $\sigma_{n_1}^{\pm1}$.
Hence the matrix $B(x)$ of \eqref{eq.B} is block diagonal, $B(x)=B_1\oplus B_2$ with blocks of sizes $n_1$ and $n_2$, and
$1-B(x)'=(1-B_1')\oplus(1-B_2)$. Each of the matrices $\rho(x_j)$, $\rho^-(x_j)$ fixes the vector $(1,1)^{\T}$, so $B_2$ fixes
the vector $(1,\dots,1)^{\T}$ and $\det(1-B_2)=0$. Therefore $P_w=0$, and $V_1(L)=0$ by Theorem~\ref{thm.B}.
\end{proof}

\begin{proof}[Proof of Theorem~\ref{thm.alternating}]
Let $D$ be an alternating diagram of $L$. If $D$ is not connected, then $L$ is a split link and $V_1(L)=0$ by
Lemma~\ref{lem.split}. If $D$ is connected and has no crossing, then $L$ is the unknot and $V_1(L)=1$
(Example~\ref{ex.unknot}). If $D$ is connected and has at least one crossing, then $\CE_{\rm I}$ is not empty, and
Theorem~\ref{thm.positive} shows that $(-1)^{\mu-1}V_1(L;-t_0,-t_1)$ is a non-empty sum of monomials with coefficient
$1$; in particular its coefficients are non-negative and it is not zero. Finally, the coefficient of $t_0^it_1^k$ in
$V_1(L;-t_0,-t_1)$ is $(-1)^{i+k}$ times the coefficient of $t_0^it_1^k$ in $V_1(L;t_0,t_1)$.
\end{proof}

\begin{remark}\label{rem.altcomments}
\begin{enumerate}\renewcommand{\theenumi}{\roman{enumi}}
\item \emph{The number of terms.} For $t_0=t_1=1$, Theorem~\ref{thm.positive} says that $(-1)^{\mu-1}V_1(L;-1,-1)$ is
      the number of under-trees times the number of over-trees. By Proposition~\ref{prop.crowell} these two
      numbers are $|P_{D,\hate}|$ and $|\tP_{D,\hate}|$ at $x_j=y_j=-1$. 
\item \emph{The Alexander polynomial.} For $x_j=t^{\ve_j}$, Proposition~\ref{prop.crowell} gives
      \[
         P_{D,\hate}(t^{\ve})=\pm\sum_\tau\ \prod_{j\in S(\tau)}(-t)^{\ve_j}.
      \]
      Together with the classical fact that $P_{D,\hate}(t^{\ve})$ is
      the Alexander polynomial of $L$ up to a sign and a power of $t$ (compare Remark~\ref{rem.burau}), this is the
      theorem that the Alexander polynomial of an alternating link is alternating, which is due to Crowell
      \cite{Crowell} and, for knots, to Murasugi \cite{Murasugi}. Theorem~\ref{thm.positive} is the same statement for two coupled copies: the map
      $\BE_D$ preserves the signs because it sends $(-x_j)(-y_j)$ to $1$.
\item \emph{Regarding the hypothesis.} The diagram has to be alternating in Proposition~\ref{prop.crowell}. For the closure
      diagram of $w=\sigma_1^3\sigma_2\sigma_1^{-1}\sigma_2$, a non-alternating diagram of the knot $5_2$, the polynomial $P_w$
      contains the monomials $1$ and $-x_1x_2x_3x_5$, which have opposite signs as monomials in the $-x_j$. And for
      the closure of $(\sigma_1\sigma_2)^4$, the non-alternating knot $8_{19}$, the polynomial $V_1$ contains the monomials
      $1$ and $t_1^3$ with the coefficient $+1$, so it is not alternating.
\end{enumerate}
\end{remark}

\section*{Tool Use}No AI tools have been used to write or create this work.

\end{document}